\newtheorem{theorem}{Theorem}
\newtheorem{proposition}[theorem]{Proposition}
\newtheorem{lemma}[theorem]{Lemma}
\newtheorem{corollary}[theorem]{Corollary}
\theoremstyle{definition} 
\newtheorem{definition}[theorem]{Definition}
\newcommand{\spe}[1]{\mathbf{#1}}
\DeclareMathOperator{\Aut}{Aut}
\DeclareMathOperator{\Hilb}{Hilb}
\DeclareMathOperator{\lk}{link}
\DeclareMathOperator{\tr}{tr}
\DeclareMathOperator{\supp}{supp}
\DeclareMathOperator{\Fix}{Fix}
\DeclareMathOperator{\bdot}{\mathbf{\cdot}}
\DeclareMathOperator{\ps}{ps}
\DeclareMathOperator{\cyc}{cyc}
\DeclareMathOperator{\sgn}{sgn}
\DeclareMathOperator{\qsym}{QSym}
\title[Equivariant flag $f$-vectors]{On Equivariant flag $f$-vectors for balanced relative simplicial complexes}
\author{Jacob A. White}
\address{School of Mathematical and Statistical Sciences, University of Texas Rio Grande Valley, 1201 West University Drive, Edinburg, TX 78539 USA}
\keywords{Balanced simplicial complexes, flag f-vectors, group actions}
\subjclass[2020]{05E05, 05E18}
\begin{document}
\maketitle
\begin{abstract}
    We study the equivariant flag $f$-vector and equivariant flag $h$-vector of a balanced relative simplicial complex with respect to a group action. When the complex satisfies Serre's condition $(S_{\ell}),$ we show that the equivariant flag $h$-vector, the equivariant $h$-vector, and the equivariant $f$-vector satisfy several inequalities.
    
    We apply these results to the study of $P$-partitions of double posets, and weak colorings of mixed graphs.
\end{abstract}
\section{Introduction}
In Stanley's seminal paper \cite{stanley-action}, he studies group actions on graded posets. Given a group $\mathfrak{G}$ acting on a graded poset $P$, it also acts on the order complex $\Delta(P)$ of chains of $P$. Stanley proves several results about this action.

Stanley also proved several results about balanced Cohen-Macaulay complexes \cite{stanley-balanced}. A \emph{balanced simplicial complex} is a pure simplicial complex $\Delta$ on vertex set $V$ of dimension $d-1$, with a function $f:V \rightarrow [d]$ such that, for all $\sigma \in \Delta$ and $i \in [d]$, we have $|\sigma \cap f^{-1}(i)| \leq 1.$ Given $S \subseteq [d]$, we let $\Delta|_S = \{\sigma \in \Delta: f(v) \in S \mbox{ for every } v \in \sigma \}$. Stanley showed that if $\Delta$ is Cohen-Macaulay, then so is $\Delta|_S$ for all $S \subseteq [d]$. This result was recently generalized, by replacing the Cohen-Macaulay property with Serre's condition.

\begin{theorem}
Let $\Delta$ be a balanced simplicial complex of dimension $d-1$. Given $S \subseteq [d]$, if $\Delta$ satisfies Serre's condition $(S_{\ell})$, then so does $\Delta|_S$.
\end{theorem}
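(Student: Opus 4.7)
The plan is to invoke the topological characterization of Serre's condition: $\Gamma$ satisfies $(S_\ell)$ if and only if $\widetilde{H}_i(\lk_\Gamma \sigma; k) = 0$ for every $\sigma \in \Gamma$ (including $\sigma = \emptyset$) and every $i < \min\{\ell - 1, \dim \lk_\Gamma \sigma\}$. Since $\lk_{\Delta|_S}(\sigma) = (\lk_\Delta \sigma)|_{S \setminus f(\sigma)}$ and $\lk_\Delta \sigma$ is balanced of dimension $d - 1 - |\sigma|$ and satisfies $(S_\ell)$, an outer induction on $d$ handles all faces $\sigma \neq \emptyset$. It therefore suffices to verify the characterization for $\Delta|_S$ at $\sigma = \emptyset$, namely
\[
\widetilde{H}_i(\Delta|_S; k) = 0 \quad \text{for every } i < \min\{\ell - 1, |S| - 1\}.
\]

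I would prove this by an inner induction on $|[d] \setminus S|$, the base case $S = [d]$ being $(S_\ell)$ applied to $\Delta$ at $\emptyset$. For the inductive step, fix $j \in [d] \setminus S$, set $S' = S \cup \{j\}$ and $V_j = f^{-1}(j)$, and consider the long exact sequence of the pair $(\Delta|_{S'}, \Delta|_S)$:
\[
\cdots \to H_{i+1}(\Delta|_{S'}, \Delta|_S) \to \widetilde{H}_i(\Delta|_S) \to \widetilde{H}_i(\Delta|_{S'}) \to \cdots.
\]
Balancedness forces each face of $\Delta|_{S'}$ not in $\Delta|_S$ to contain a unique color-$j$ vertex $v$ and hence to factor uniquely as $\{v\} \cup \tau$ with $\tau \in (\lk_\Delta v)|_S$. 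The relative simplicial chain complex therefore splits as $\bigoplus_{v \in V_j} \widetilde{C}_{*-1}((\lk_\Delta v)|_S)$, giving
\[
H_{i+1}(\Delta|_{S'}, \Delta|_S) \cong \bigoplus_{v \in V_j} \widetilde{H}_i\bigl((\lk_\Delta v)|_S\bigr).
\]

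Both flanking terms then vanish in the required range. The right-hand term $\widetilde{H}_i(\Delta|_{S'})$ vanishes for $i < \min\{\ell - 1, |S|\}$ by the inner inductive hypothesis applied to $\Delta|_{S'}$ (whose deficiency $|[d] \setminus S'|$ is smaller). Each summand of the left-hand term vanishes for $i < \min\{\ell - 1, |S| - 1\}$ because the outer inductive hypothesis, applied to the lower-dimensional balanced $(S_\ell)$-complex $\lk_\Delta v$, implies that $(\lk_\Delta v)|_S$ satisfies $(S_\ell)$ and, by the topological characterization at $\emptyset$, has the required reduced-homology vanishing. Exactness forces $\widetilde{H}_i(\Delta|_S; k) = 0$ for $i < \min\{\ell - 1, |S| - 1\}$, closing both inductions.

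The main technical step is the identification of the relative chain complex as $\bigoplus_v \widetilde{C}_{*-1}((\lk_\Delta v)|_S)$; this uses nothing beyond the balanced condition $|\sigma \cap V_j| \leq 1$, but it is the lever that lets the outer induction on $d$ (via links of vertices) and the inner induction on $|[d] \setminus S|$ (via $\Delta|_{S'}$) combine through the long exact sequence. I do not anticipate any further serious obstacle.
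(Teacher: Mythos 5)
Your proof is correct, but it follows a genuinely different route from the one in the paper (which, for this non-relative statement, follows Holmes--Lyle and is generalized in the proof of Theorem \ref{thm:restriction}). Both arguments share the same skeleton: induction on $|[d]\setminus S|$, the observation that links inherit $(S_\ell)$ (Lemma \ref{lem:link}, which you use implicitly and should state), and an induction through links to reduce everything to the vanishing of $\widetilde{H}_i(\Delta|_S)$ itself. The difference is in how that last vanishing is obtained. The paper deletes the color class $\kappa^{-1}(j)$ one vertex at a time via Mayer--Vietoris (Lemma \ref{lem:independent}), which forces an auxiliary induction on $\ell$ to isolate the top degree $\ell-2$, an induction on the size of the independent set, and a surjectivity claim for the map $i_x^{\ast}$ induced by inclusion. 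You instead remove the whole color class at once using the long exact sequence of the pair $(\Delta|_{S\cup\{j\}},\Delta|_S)$ and the identification
\[
H_{i+1}\bigl(\Delta|_{S\cup\{j\}},\Delta|_S\bigr)\;\cong\;\bigoplus_{v\in\kappa^{-1}(j)}\widetilde{H}_i\bigl((\lk_{\Delta}v)|_S\bigr),
\]
which is valid precisely because balancedness gives each face of $\Delta|_{S\cup\{j\}}\setminus\Delta|_S$ a unique color-$j$ vertex; the index ranges you check ($i<\min\{\ell-1,|S|-1\}$ for the left term via the outer induction on $d$, $i<\min\{\ell-1,|S|\}$ for the right term via the inner induction) are exactly right. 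What your approach buys is economy: the relative-homology term absorbs all the Mayer--Vietoris steps simultaneously, so the induction on $\ell$, the induction on $|I|$, and the surjectivity bookkeeping all disappear. What the paper's approach buys is generality: its Lemma \ref{lem:independent} applies to arbitrary independent sets rather than just color classes, and the whole argument is set up to pass verbatim to relative complexes $(\Delta,\Gamma)$, which is what the paper ultimately needs; your excision argument would also extend to that setting, but you would need to redo the direct-sum identification for relative chain groups of a pair of pairs.
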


Our goal is to study balanced relative simplicial complexes, equipped with a group action by a finite group $\mathfrak{G}$. 
Let $\Phi$ be a balanced relative simplicial complex of dimension $d-1$. That is, $\Phi$ is a collection of subsets of $V$ such that, for  $\rho \subseteq \sigma \subseteq \tau \subseteq V$, if $\rho, \tau \in \Phi$, then $\sigma \in \Phi$. We also require that the maximal faces all have the same size $d$, and assume that there is a function $\kappa: V \rightarrow [d]$ such that, for every maximal face $\sigma \in \Phi$, $\kappa$ restricted to $\sigma$ is a bijection. Given $S \subseteq [d]$, we let $\Phi|_S = \{\sigma \in \Phi: \{\kappa(v): v \in \sigma \} \subseteq S \}$.
\begin{theorem}
Let $\Phi$ be a balanced relative simplicial complex on vertex set $V$ of dimension $d-1$ with coloring $\kappa: V \rightarrow [d].$ Suppose that $\Phi$ satisfies Serre's condition $(S_{\ell})$. Given $S \subseteq [d]$, we have $\Phi|_S$ also satisfies Serre's condition $(S_{\ell})$. 

If $\Phi$ is relatively Cohen-Macaulay, then so is $\Phi|_S$.
\label{thm:restriction}
\end{theorem}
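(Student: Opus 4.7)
The plan is to translate $(S_\ell)$ into a combinatorial vanishing condition on the reduced cohomology of links, and then exploit the fact that, in a balanced complex, color-restriction commutes with link formation. First, I would use the relative analogue of Reisner's criterion: $\Phi = (\Delta, \Gamma)$ satisfies $(S_\ell)$ if and only if, for every face $\tau$ of the ambient simplicial complex, the reduced relative cohomology $\tilde{H}^i(\lk_\Phi(\tau); k)$ vanishes for $i < \min(\ell - 1, \dim \lk_\Phi(\tau))$. Such a criterion is standard, obtainable via Hochster's formula for the local cohomology of the Stanley-Reisner module $k\{\Phi\}$.

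Next, I would establish the key combinatorial identity
\[
\lk_{\Phi|_S}(\tau) = (\lk_\Phi(\tau))|_{S \setminus \kappa(\tau)}
\]
for every face $\tau$ with $\kappa(\tau) \subseteq S$; this follows directly from balancedness. Since Serre's condition passes to links, it suffices to prove the following main lemma: if $\Psi$ is a balanced relative simplicial complex on colors $[d']$ satisfying $(S_\ell)$, then for every $T \subseteq [d']$ one has $\tilde{H}^j(\Psi|_T; k) = 0$ for $j < \min(\ell - 1, \dim \Psi|_T)$. Applying this lemma to $\Psi = \lk_\Phi(\tau)$ and $T = S \setminus \kappa(\tau)$ verifies the Reisner criterion at $\tau$ for $\Phi|_S$.

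To prove the main lemma I would induct on $|[d'] \setminus T|$. The base case $T = [d']$ is immediate from $(S_\ell)$ applied at the empty face. For the inductive step, choose $i_0 \in [d'] \setminus T$ and set $T' = T \cup \{i_0\}$. By balancedness, every face of $\Psi|_{T'}$ contains at most one color-$i_0$ vertex, so the relative cochain complex of the pair $\Psi|_T \subseteq \Psi|_{T'}$ splits as a direct sum over color-$i_0$ vertices, yielding
\[
\tilde{H}^i(\Psi|_{T'}, \Psi|_T; k) \;\cong\; \bigoplus_{v \in \kappa^{-1}(i_0)} \tilde{H}^{i-1}\bigl((\lk_\Psi(v))|_T; k\bigr).
\]
Each link $\lk_\Psi(v)$ inherits $(S_\ell)$ from $\Psi$ and has color set $[d'] \setminus \{i_0\}$, so restricting to $T$ involves one fewer color; the inductive hypothesis then applies to both $\Psi|_{T'}$ and to each $(\lk_\Psi(v))|_T$, and the resulting vanishings combine through the long exact sequence of the pair to give $\tilde{H}^j(\Psi|_T; k) = 0$ in the required range.

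The main obstacle I anticipate is verifying this cochain decomposition cleanly in the relative setting — one has to track the $\Delta$ and $\Gamma$ parts simultaneously and confirm that the color-$i_0$-vertex splitting descends to the quotient $C^*(\Delta|_{T'})/C^*(\Gamma|_{T'})$. The relatively Cohen-Macaulay half of the theorem follows from the $(S_\ell)$ half by taking $\ell \geq d$, since then $(S_\ell)$ forces depth equal to Krull dimension for both $k\{\Phi\}$ and $k\{\Phi|_S\}$.
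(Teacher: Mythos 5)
Your proof is correct, but it is mechanically different from the one in the paper, so a comparison is worthwhile. The paper goes in the opposite direction: it deletes one color class at a time, observing that each $\kappa^{-1}(x)$ is an \emph{excellent} set (it meets every facet exactly once), proving that deletion of an excellent set preserves $(S_{\ell})$ (Lemma \ref{lem:excellent}), and reducing that to Lemma \ref{lem:independent}, a vertex-by-vertex Mayer--Vietoris computation for deleting an independent set that needs a double induction (on $\ell$ and on the size of the set) and must track the surjectivity of the map induced by the inclusion of $\Phi \setminus \{x\}$ into $\Phi$. You instead add the missing colors back one at a time and use the long exact sequence of the pair $(\Psi|_{T'}, \Psi|_T)$, computing the relative term in a single stroke: every face of $\Phi|_{T'}$ outside $\Phi|_T$ contains exactly one color-$i_0$ vertex, so the quotient chain complex splits over those vertices into shifted copies of the chain complexes of $(\lk_{\Psi}(v))|_T$, and this splitting does descend to the relative setting because the boundary of such a face either retains $v$, or lands in $C_{\bullet}(\Psi|_T)$, or lies in $\Gamma$ and is already zero. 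Both proofs share the reduction to links via $\lk_{\Phi|_S}(\tau) = (\lk_{\Phi}(\tau))|_{S \setminus \kappa(\tau)}$ and the fact that links inherit $(S_{\ell})$ (Lemma \ref{lem:link}). What your route buys: both outer terms of $\widetilde{H}_{j+1}(\Psi|_{T'}, \Psi|_T) \rightarrow \widetilde{H}_j(\Psi|_T) \rightarrow \widetilde{H}_j(\Psi|_{T'})$ vanish in the needed range directly from the inductive hypothesis, so no surjectivity bookkeeping and no auxiliary induction on $\ell$ are required. Two points you should make explicit when writing this up: the induction is on the number of omitted colors ranging over \emph{all} balanced relative complexes satisfying $(S_{\ell})$, since the hypothesis is applied to the links $\lk_{\Psi}(v)$ rather than to $\Psi$ itself; and the identification $\dim((\lk_{\Psi}(v))|_T) = |T| - 1$ when this restriction is nonvoid, which you need in order for the inductive vanishing range to cover $j < \min(\ell-1, |T|-1)$, uses the purity built into the definition of a balanced relative complex.
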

Our primary goal is to apply this result to obtain information about flag enumeration for $\Phi$ with respect to a group action $\mathfrak{G}$.
Let $\mathfrak{G}$ be a group which acts on $V$. We say $\mathfrak{G}$ acts on $\Phi$ if for every $\mathfrak{g} \in \mathfrak{G}$, the following two conditions are satisfied:
\begin{enumerate}
    \item For every $\sigma \in \Phi$, we have $\{\mathfrak{g}v: v \in \sigma \} \in \Phi$.
    \item For every $v \in V$, we have $\kappa(\mathfrak{g}v) = \kappa(v)$.
\end{enumerate} 

Let $x_1, x_2, \ldots,$ be a sequence of commuting indeterminates. A power series $f$ in $x_1, \ldots$ is a \emph{quasisymmetric function} if $[x_{i_1}^{a_1}\cdots x_{i_k}^{a_k}]f = [x_1^{a_1}\cdots x_k^{a_k}] f$ for every sequence $(a_1, \ldots, a_k)$ and every increasing sequence $i_1 < i_2 < \cdots < i_k.$ The most common basis for the ring of quasisymmetric functions is the basis of monomial quasisymmetric functions. We define our quasisymmetric functions in terms of this basis.

For $\mathfrak{g} \in \mathfrak{G}$, we define the \emph{flag quasisymmetric class function} of $(\Phi, \mathfrak{G})$ to be 
\[ \Hilb(\Phi, \mathfrak{G}, \mathbf{x}; \mathfrak{g}) = \sum_{\sigma \in \Phi: \mathfrak{g}\sigma = \sigma}  M_{\kappa(\sigma), d+1}. \]

This is a quasisymmetric function. As we vary $\mathfrak{g} \in \mathfrak{G}$, we obtain a class function on $\mathfrak{G}$ whose values are quasisymmetric functions. Equivalently, we can write $\Hilb(\Phi, \mathfrak{G}, \mathbf{x}; \mathfrak{g}) = \sum_{S \subseteq [d]} f_S(\Phi, \mathfrak{G}; \mathfrak{g}) M_{S, d+1}$ for certain constants $f_S(\Phi, \mathfrak{G}; \mathfrak{g})$. Then $f_S(\Phi, \mathfrak{G})$ is also a character. We refer to the character $f_S(\Phi, \mathfrak{G})$ as the \emph{equivariant flag $f$-vector} of $(\Phi, \mathfrak{G})$. Our goal will be to prove inequalities about the equivariant flag $f$-vector. To that end, given two characters $\chi$ and $\rho$ of a group $\mathfrak{G}$, we write $\chi \geq_{\mathfrak{G}} \rho$ if $\chi - \rho$ is also a character. The equivariant flag $h$-vector is defined by 
\[h_S(\Phi, \mathfrak{G}) = \sum_{T: S \subseteq T} (-1)^{|T \setminus S|} f_S(\Phi, \mathfrak{G}). \]

\begin{theorem}
Let $\Phi$ be a balanced relative simplicial complex of dimension $d-1$ and let $\mathfrak{G}$ act on $\Phi$. Suppose $\Phi$ satisfies condition $(S_{\ell})$. Given $S \subseteq T \subseteq [d]$ with $|S| \leq \ell$, then 
\[\sum_{R: S \subseteq R \subseteq T} h_R(\Phi, \mathfrak{G}) \geq_{\mathfrak{G}} 0. \] 

In particular, if $\Phi$ is Cohen-Macaulay, then $h_T(\Phi, \mathfrak{G}) \geq_{\mathfrak{G}} 0$ for all $T \subseteq [d]$.
\label{thm:intro1}
\end{theorem}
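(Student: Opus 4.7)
The plan is to reduce the sum $\sum_{R:\, S \subseteq R \subseteq T} h_R(\Phi,\mathfrak{G})$ to a sum of top flag $h$-numbers of link restrictions, each of which is Cohen-Macaulay via Theorem~\ref{thm:restriction}, hence a genuine character. First, using the definition of $h_R$ as an alternating sum of $f_{T'}$'s and interchanging the order of summation, a short Möbius-inversion argument gives
\[
\sum_{R:\, S \subseteq R \subseteq T} h_R(\Phi,\mathfrak{G}) \;=\; \sum_{V \subseteq S} (-1)^{|S|-|V|}\, f_{(T\setminus S)\cup V}(\Phi,\mathfrak{G}).
\]
In the balanced setting, every face $\sigma$ with $\kappa(\sigma) = (T\setminus S) \cup V$ factors uniquely and $\mathfrak{G}$-equivariantly as $\sigma = \rho \cup \tau$, with $\kappa(\rho) = T\setminus S$ and $\tau \in \lk_\Phi(\rho)$ of color $V$. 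Substituting this decomposition and reorganizing by the base $\rho$ yields, at each $\mathfrak{g}$,
\[
\sum_{R:\, S \subseteq R \subseteq T} h_R(\Phi,\mathfrak{G})(\mathfrak{g}) \;=\; \sum_{\substack{\kappa(\rho)\,=\,T \setminus S \\ \mathfrak{g}\rho\,=\,\rho}} h_S\bigl(\lk_\Phi(\rho)|_S,\ \Stab_\mathfrak{G}(\rho)\bigr)(\mathfrak{g}),
\]
which, interpreted as a $\mathfrak{G}$-character, is a sum of induced characters $\mathrm{Ind}_{\Stab_\mathfrak{G}(\rho)}^{\mathfrak{G}} h_S(\lk_\Phi(\rho)|_S,\Stab_\mathfrak{G}(\rho))$ over $\mathfrak{G}$-orbit representatives $\rho$ of color $T \setminus S$.

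Next I would show each link restriction $\lk_\Phi(\rho)|_S$ is relatively Cohen-Macaulay. Serre's condition $(S_\ell)$ is inherited by links: for any $\tau \in \lk_\Phi(\rho)$, one has $\lk_{\lk_\Phi(\rho)}(\tau) = \lk_\Phi(\rho\cup\tau)$, so the low-degree reduced homology vanishing on links that characterizes $(S_\ell)$ on $\Phi$ gives the same condition on $\lk_\Phi(\rho)$. Theorem~\ref{thm:restriction} then gives that $\lk_\Phi(\rho)|_S$ satisfies $(S_\ell)$ as well. But $\lk_\Phi(\rho)|_S$ is a balanced relative complex of dimension $|S|-1 \leq \ell-1$, and on such complexes Serre's condition $(S_\ell)$ coincides with relative Cohen-Macaulayness.

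For any balanced, relatively Cohen-Macaulay complex of dimension $|S|-1$ carrying a group action, the top flag $h$-number is a genuine character: a color-preserving, equivariant linear system of parameters $\theta_1,\dots,\theta_{|S|}$ for the Stanley-Reisner module exists by averaging (when $|\mathfrak{G}|$ is invertible in the ground field), and the Artinian reduction is a finite-dimensional graded $\mathfrak{G}$-module whose multigraded character at degree $\mathbf{1}_S$ is exactly $h_S$. This gives $h_S(\lk_\Phi(\rho)|_S,\Stab_\mathfrak{G}(\rho)) \geq_{\Stab_\mathfrak{G}(\rho)} 0$ for each $\rho$, and since induction preserves effectivity, the total sum is $\geq_\mathfrak{G} 0$. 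The Cohen-Macaulay corollary follows by specializing: $\mathrm{CM}$ is $(S_d)$, so every $S \subseteq [d]$ satisfies $|S| \leq d = \ell$, and setting $T = S$ collapses the sum to the single term $h_S(\Phi,\mathfrak{G}) \geq_\mathfrak{G} 0$.

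The main obstacle is confirming that $(S_\ell)$ descends to links in the relative, equivariant setting. While the absolute non-equivariant version follows immediately from the Stanley-Reisner dictionary between local cohomology and reduced homology of links, the relative case requires a careful Hochster-type computation for the Stanley-Reisner module, and the resulting decomposition must be shown to respect the $\mathfrak{G}$-action. A secondary technical issue is arranging regularity together with $\mathfrak{G}$-invariance of the linear system of parameters in the third step; this is routine in favorable characteristic but must be handled via a character-theoretic detour otherwise.
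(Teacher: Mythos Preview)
Your reduction matches the paper's exactly: the M\"obius identity $\sum_{R:S\subseteq R\subseteq T}h_R=\sum_{V\subseteq S}(-1)^{|S|-|V|}f_{(T\setminus S)\cup V}$ and the subsequent decomposition as a sum of induced characters $\mathrm{Ind}_{\mathfrak{G}_\rho}^{\mathfrak{G}}h_S(\lk_\Phi(\rho)|_S,\mathfrak{G}_\rho)$ over orbit representatives $\rho$ with $\kappa(\rho)=T\setminus S$ are precisely the content of the paper's Theorem~\ref{lem:fundamental}.

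The divergence is in how you certify $h_S(\lk_\Phi(\rho)|_S,\mathfrak{G}_\rho)\geq_{\mathfrak{G}_\rho}0$. You propose a Stanley--Reisner argument: find a $\mathfrak{G}_\rho$-equivariant colored l.s.o.p.\ and read $h_S$ off the Artinian reduction. The paper instead uses only topology: the Hopf trace formula (its Theorem~\ref{thm:eulerchar2}) gives $h_S(\lk_\Phi(\rho)|_S,\mathfrak{G}_\rho)=\sum_i(-1)^{|S|-i}\chi_{\widetilde H_{i-1}(\lk_\Phi(\rho)|_S),\mathfrak{G}_\rho}$, and since $\lk_\Phi(\rho)|_S=\lk_{\Phi|_T}(\rho)$ is the link of a face in $\Phi|_T$, which satisfies $(S_\ell)$ by Theorem~\ref{thm:restriction}, all but the top homology vanishes and the alternating sum collapses to the single effective character $\chi_{\widetilde H_{|S|-2}(\lk_\Phi(\rho)|_S),\mathfrak{G}_\rho}$. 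This is cleaner and entirely sidesteps your ``secondary technical issue'' about equivariant regular sequences.

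Your assessment of the obstacles is also off. What you call the ``main obstacle''---that $(S_\ell)$ descends to links in the relative setting---is in fact a two-line observation (the paper's Lemma~\ref{lem:link}): $\lk_{\lk_\Phi(\sigma)}(\tau)=\lk_\Phi(\sigma\cup\tau)$, so the vanishing condition is inherited directly, with no equivariant or Hochster-type input needed. The genuine work in the paper lies elsewhere, namely in Theorem~\ref{thm:restriction} itself (that $(S_\ell)$ passes to color-selected subcomplexes in the relative case), which you are correctly citing as a black box.
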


There is also an equivariant $f$-vector, given by $f_{i-1}(\Phi, \mathfrak{G}) = \sum_{S \subseteq [d]: |S| = i} f_S(\Phi, \mathfrak{G})$. The equivariant $h$-vector is given by $h_i(\Phi, \mathfrak{G}) = \sum_{S \subseteq [d]: |S| = i} h_S(\Phi, \mathfrak{G})$. 
First, we state some new inequalities for the equivariant $h$-vector of a balanced relative simplicial complex which satisfies condition $(S_{\ell})$. 
\begin{theorem}
Let $\Phi$ be a balanced relative simplicial complex of dimension $d-1$. Let $\mathfrak{G}$ act on $\Phi$, and suppose that $\Phi$ satisfies $(S_{\ell})$. Then $h_i(\Phi, \mathfrak{G}) \geq_{\mathfrak{G}} 0$ for $i \leq \ell$. For $0 \leq i \leq \ell \leq j \leq d$, we have 
\[ \sum_{k=\ell}^{j} \binom{d-k}{j-k}\binom{k-\ell+i}{i}h_{k}(\Phi, \mathfrak{G}) \geq_{\mathfrak{G}} 0.\]
\label{thm:intro2}
\end{theorem}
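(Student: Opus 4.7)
The plan is to reduce to the case $j=d$ via Theorem~\ref{thm:restriction}, and then express the resulting sum as a non-negative combination of inequalities from Theorem~\ref{thm:intro1}.

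First I would fix $U \subseteq [d]$ with $|U|=j$ and observe, using Theorem~\ref{thm:restriction}, that $\Phi|_U$ is a balanced relative simplicial complex of dimension $j-1$ satisfying $(S_\ell)$. A routine M\"obius inversion on the definition of $h_S$ gives $h_S(\Phi|_U, \mathfrak{G}) = h_S(\Phi, \mathfrak{G})$ for every $S \subseteq U$, so
\[
\sum_{|U|=j} h_k(\Phi|_U, \mathfrak{G}) \;=\; \binom{d-k}{j-k}\, h_k(\Phi, \mathfrak{G}).
\]
Hence, if I can establish the theorem in the ``top'' case $j = d$ for every balanced relative simplicial complex satisfying $(S_\ell)$ (in which the factor $\binom{d-k}{j-k}$ collapses to $1$), then applying it to each $\Phi|_U$ and summing over $U$ of size $j$ produces the general statement. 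The reduced goal is therefore
\[
(\ast_i): \qquad \sum_{k=\ell}^{d} \binom{k-\ell+i}{i}\, h_k(\Phi,\mathfrak{G}) \;\geq_{\mathfrak{G}}\; 0, \qquad 0 \leq i \leq \ell.
\]

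To prove $(\ast_i)$, I would express the left-hand side as a non-negative combination of the inequalities of Theorem~\ref{thm:intro1}. For each $t \in [\ell,d]$, summing Theorem~\ref{thm:intro1} over all pairs $(S,T)$ with $|S|=\ell$, $|T|=t$, $S \subseteq T \subseteq [d]$, yields the non-negative character
\[
I_t \;:=\; \sum_{k=\ell}^{t} \binom{k}{\ell}\binom{d-k}{t-k}\, h_k(\Phi,\mathfrak{G}) \;\geq_{\mathfrak{G}}\; 0.
\]
I then look for non-negative weights $W(\ell), W(\ell+1), \dots, W(d)$ such that $\sum_{t} W(t)\, I_t$ equals the left-hand side of $(\ast_i)$. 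Collecting the coefficient of $h_k(\Phi,\mathfrak{G})$ and writing $n = k-\ell$, $m = t-\ell$, $D = d-\ell$, the required identity becomes
\[
\sum_{m=0}^{D} W(\ell+m)\binom{D-n}{m-n} \;=\; \widehat{W}(n) \;:=\; \frac{\binom{n+i}{i}}{\binom{n+\ell}{\ell}} \;=\; \frac{\ell!}{i!}\prod_{r=i+1}^{\ell} \frac{1}{n+r}, \qquad n \in [0,D].
\]
This triangular linear system inverts to $W(\ell+m) = (-\Delta)^{D-m}\widehat{W}(m)$, where $\Delta f(n) := f(n+1) - f(n)$ is the forward difference.

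The hard part will be verifying $W(\ell+m) \geq 0$. My plan is to recognize each factor $\tfrac{1}{n+r}$ (with $r \geq 1$) as a completely monotone function of $n \in [0,\infty)$, being the Laplace transform of $e^{-rt}$, and to invoke the fact that products of completely monotone functions remain completely monotone; this gives complete monotonicity of $\widehat{W}$. Since the iterated forward difference admits the integral representation $(-\Delta)^k f(n) = \int_{[0,1]^k} (-1)^k f^{(k)}(n+s_1+\cdots+s_k)\, ds_1\cdots ds_k$, this implies $(-\Delta)^k \widehat{W}(n) \geq 0$ for all integers $k, n \geq 0$, giving $W(\ell+m) \geq 0$. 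The combination $\sum_m W(\ell+m)\, I_{\ell+m}$ is then a non-negative combination of non-negative characters that by construction equals the left-hand side of $(\ast_i)$, which combined with the restriction reduction yields the full statement of Theorem~\ref{thm:intro2}.
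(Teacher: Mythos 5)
Your argument is correct, but it reaches the conclusion by a genuinely different route than the paper. The paper proves the key inequality via Lemma \ref{lem:fundamental2}: it interprets $\binom{|T|-(\ell-i)}{i}$ as the number of $\ell$-subsets $R \subseteq T$ whose $\ell-i$ smallest elements agree with those of $T$, and thereby rewrites $\sum_{T \subseteq S,\,|T|\ge \ell}\binom{|T|-(\ell-i)}{i}h_T$ as an integer sum of the interval characters $h_{R,T'}$ of Theorem \ref{thm:interpretation}, each of which is exhibited as an induced homology character. You instead aggregate Theorem \ref{thm:intro1} only over pairs $(S,T)$ with $|S|=\ell$, $|T|=t$ to form the characters $I_t$, and then produce a nonnegative-weight certificate $\sum_t W(t)I_t$ equal to the target, with positivity of the weights coming from complete monotonicity of $\widehat{W}(n)=\binom{n+i}{i}/\binom{n+\ell}{\ell}$. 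I checked the pieces: the reduction to $j=d$ via $h_S(\Phi|_U)=h_S(\Phi)$ and Theorem \ref{thm:restriction} is sound; the count $I_t=\sum_k\binom{k}{\ell}\binom{d-k}{t-k}h_k$ is right; the binomial inversion does give $W(\ell+m)=(-\Delta)^{D-m}\widehat W(m)$; and the complete-monotonicity argument is valid (a spot check with $d=2,\ell=1,i=0$ gives $W(1)=W(2)=\tfrac12$ and recovers $h_1+h_2$). One point worth making explicit: your weights are nonnegative rationals rather than integers, so you should note that a virtual character whose multiplicities are nonnegative reals is effective — that is what lets you pass from $\sum_t W(t)I_t\ge 0$ to the conclusion. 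The trade-off is that the paper's decomposition is combinatorially explicit and identifies each summand with a concrete representation, whereas yours is a cleaner ``positivity certificate'' that avoids the $m_i(R)$/$M_i(R)$ bookkeeping at the cost of an analytic positivity lemma. Finally, you do not address the first assertion ($h_i(\Phi,\mathfrak G)\ge_{\mathfrak G}0$ for $i\le\ell$); it is immediate from Theorem \ref{thm:intro1} with $S=T$ summed over $|T|=i$, and the paper's own proof omits it too, but a sentence to that effect should be added.
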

For pure simplicial complexes which satisfy $(S_{\ell})$, and with $j=d$, these inequalities were previously shown in \cite{hvector1} and \cite{hvector2} for ordinary $h$-vectors.

We also give some inequalities for the equivariant $f$-vector.
\begin{theorem}
Let $\Phi$ be a balanced relative simplicial complex of dimension $d-1$ and let $\mathfrak{G}$ act on $\Phi$. Suppose $\Phi$ satisfies condition $(S_{\ell})$, and that $f_{i-1}(\Phi, \mathfrak{G}) = 0$ for $i < \ell$.
\begin{enumerate}
    \item For $i \geq \ell$, we have \[(d-i)f_{i-1}(\Phi, \mathfrak{G}) \leq_{\mathfrak{G}} (i-\ell+2)f_i(\Phi, \mathfrak{G}).\]
        \item For $\ell \leq i \leq \lfloor (d+\ell-1)/2 \rfloor$, we have $f_{i-1}(\Phi, \mathfrak{G}) \leq_{\mathfrak{G}} f_{i}(\Phi, \mathfrak{G})$. 
    \item For $\ell \leq i \leq \lfloor (d+\ell)/2 \rfloor$, we have $f_{i-1}(\Phi, \mathfrak{G}) \leq_{\mathfrak{G}} f_{d+\ell-i-1}(\Phi, \mathfrak{G})$. 
\end{enumerate}
\label{thm:intro3}
\end{theorem}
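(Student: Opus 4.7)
The plan is to derive all three inequalities from Theorem~\ref{thm:intro2} via the invertible linear relation between the equivariant $f$- and $h$-vectors. The hypothesis $f_{i-1}(\Phi,\mathfrak{G}) = 0$ for $i<\ell$, combined with the standard identity $f_{i-1}(\Phi,\mathfrak{G}) = \sum_{k}\binom{d-k}{i-k}h_k(\Phi,\mathfrak{G})$, inductively forces $h_k(\Phi,\mathfrak{G}) = 0$ for $k<\ell$. Thus for $j \geq \ell$, $f_{j-1}(\Phi,\mathfrak{G})$ equals the character $g_{j,0} := \sum_{k=\ell}^{j}\binom{d-k}{j-k}h_k(\Phi,\mathfrak{G})$; more generally, $g_{j,m} := \sum_{k=\ell}^{j}\binom{d-k}{j-k}\binom{k-\ell+m}{m}h_k(\Phi,\mathfrak{G})$ is a character whenever $0 \leq m \leq \ell$ and $\ell \leq j \leq d$ by Theorem~\ref{thm:intro2}.

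Part (1) follows from the elementary binomial identity $(i-\ell+2)\binom{d-k}{i+1-k} - (d-i)\binom{d-k}{i-k} = (k-\ell+1)\binom{d-k}{i+1-k}$, valid for $k \leq i+1$: summing against $h_k(\Phi,\mathfrak{G})$ yields $(i-\ell+2)f_i(\Phi,\mathfrak{G}) - (d-i)f_{i-1}(\Phi,\mathfrak{G}) = g_{i+1,1}$, which is a character.

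Part (2) is derived from (1). For $i \leq (d+\ell-2)/2$, the inequality $i-\ell+2 \leq d-i$ together with $f_i(\Phi,\mathfrak{G}) \geq_{\mathfrak{G}} 0$ gives $(d+\ell-2i-2)f_i \geq_{\mathfrak{G}} 0$; adding this to the inequality of (1) yields $(d-i)(f_i - f_{i-1}) \geq_{\mathfrak{G}} 0$. Since $f_i - f_{i-1}$ is a virtual character with integer multiplicities and $d-i$ is a positive integer, this is equivalent to $f_i - f_{i-1} \geq_{\mathfrak{G}} 0$. In the remaining boundary case $i = (d+\ell-1)/2$, which occurs only when $d+\ell$ is odd, one has $d+\ell-i-1 = i$, so the needed inequality coincides with part (3) at this value of $i$.

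Part (3) is the main technical step. The plan is to express $f_{d+\ell-i-1}-f_{i-1} = g_{d+\ell-i,0}-g_{i,0}$ as a non-negative integer combination of characters $g_{j,m}$ with $m \leq \ell$. Telescoping as $\sum_{j=i}^{d+\ell-i-1}(f_j-f_{j-1})$ and substituting from (1) gives $\sum_{j}[(d+\ell-2j-2)g_{j+1,0}+g_{j+1,1}]/(d-j)$, in which the coefficient of $g_{j+1,0}$ is non-negative only in the lower half of the range. The proof then proceeds by pairing each upper-half index $j$ with its reflection $j' = d+\ell-2-j$ in the lower half and using Chu--Vandermonde-style binomial identities to combine each paired contribution into a non-negative integer combination of characters $g_{j'',m''}$ staying within $m'' \leq \ell$. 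The main obstacle is verifying this combinatorial reorganization stays within the admissible parameter range of Theorem~\ref{thm:intro2}; as a fallback, part (3) can instead be proved by invoking Theorem~\ref{thm:intro1} directly and summing the flag inequality $\sum_{S \subseteq R \subseteq T}h_R(\Phi,\mathfrak{G}) \geq_{\mathfrak{G}} 0$ over appropriate pairs $(S,T)$ with $|S| = \ell$ and $|T| = d+\ell-i$.
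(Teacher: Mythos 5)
Your part (1) is correct, and it is in substance the paper's own argument: using $(d-i)\binom{d-k}{i-k}=(i+1-k)\binom{d-k}{i+1-k}$, the difference $(i-\ell+2)f_i(\Phi,\mathfrak{G})-(d-i)f_{i-1}(\Phi,\mathfrak{G})$ equals $\sum_{k=\ell}^{i+1}\binom{d-k}{i+1-k}\binom{k-\ell+1}{1}h_k(\Phi,\mathfrak{G})$, which is exactly what the paper obtains by summing Lemma~\ref{lem:fundamental2} (with its parameter equal to $1$) over all $S$ of size $i+1$; citing the aggregated form, Theorem~\ref{thm:intro2}, instead is immaterial. Your observation that the hypothesis forces $h_k(\Phi,\mathfrak{G})=0$ for $k<\ell$ is also right, as is your derivation of part (2) in the range $i\leq (d+\ell-2)/2$, including the integrality trick used to divide by $d-i$.

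Part (3) is a genuine gap: you give a plan (telescoping plus a pairing of upper- and lower-half indices) and explicitly concede you have not verified the reorganization, and the fallback is also only a sketch --- and as described it cannot work, since the coefficient of $h_\ell$ in $f_{d+\ell-i-1}-f_{i-1}$ is $\binom{d-\ell}{i-\ell}-\binom{d-\ell}{i-\ell}=0$ while every character $\sum_{S\subseteq R\subseteq T}h_R$ with $|S|=\ell$ contributes positively to $h_\ell$. But the obstacle you hit is real: part (3), and the boundary case $i=\lfloor(d+\ell-1)/2\rfloor$ of part (2) that you reduced to it, fail as stated. Take $V=\{1,2,3\}$, $\kappa=\id$, $\Delta=2^V$, $\Gamma=\{\emptyset\}$, so $\Phi$ is the set of nonempty faces of the $2$-simplex with the trivial group and $d=3$. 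This is balanced, satisfies $(S_1)$ (the only conditions are $\widetilde{H}_{-1}(\lk_{\Delta}(\sigma),\lk_{\Gamma}(\sigma))=0$ for non-facets $\sigma$, which hold), and has $f_{-1}=0$, so the hypotheses hold with $\ell=1$; yet $(f_0,f_1,f_2)=(3,3,1)$ and part (3) at $i=1$ asserts $3\leq 1$. The tetrahedron likewise violates part (2) at $i=2$ ($f_1=6>4=f_2$). The paper's own proof does not close this either: it invokes Proposition~\ref{prop:flawlesssequence}, whose conclusion that $(f_{\ell-1},\ldots,f_{d-1})$ is effectively flawless requires the hypothesis $(d-i)f_{i-1}\leq_{\mathfrak{G}}(i-\ell)f_i$ (that is, $r=\ell$), whereas part (1) only supplies $r=\ell-2$. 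What actually follows from part (1) by that proposition is $f_{i-1}\leq_{\mathfrak{G}}f_i$ for $i\leq(d+\ell-2)/2$ --- which is what you proved --- and $f_{i-1}\leq_{\mathfrak{G}}f_{d+\ell-i-3}$, not $f_{d+\ell-i-1}$. So your inability to complete part (3) reflects a defect in the statement, not in your method; the constructive thing to do is to record the corrected symmetric inequality rather than keep searching for a proof of the stated one.
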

If we assume that the group action is trivial, and we define $a_i(\Phi) = f_{i+\ell-1}(\Phi, \mathfrak{G})$, then the latter two conditions become: 
\begin{enumerate}
    \item For $i \leq \lfloor (d-\ell-1)/2 \rfloor$, we have $a_i \leq a_{i+1}$.
    \item For $i \leq \lfloor (d-\ell)/2 \rfloor$, we have $a_i \leq a_{d-\ell-i}$.
\end{enumerate}
 In the literature, the sequence $(a_0, \ldots, a_{d-\ell})$ is called \emph{strongly flawless}. Thus, our Theorem states that if the first several entries of the sequence are $0$, then deleting them results in a strongly flawless sequence.

Our primary application is to double posets and mixed graphs.
A mixed graph is a generalization of a graph where we allow directed edges. A coloring of a mixed graph $G$ satisfies $f(u) \neq f(v)$ when $uv$ is an undirected edge, and $f(u) \leq f(v)$ when $(u,v)$ is a directed edge. Given an automorphism $\mathfrak{g}$ of $G$, we let \[\chi(G, \mathfrak{G}, \mathbf{x}; \mathfrak{g}) = \sum_{f: \mathfrak{g}f = f} \prod_{v \in V} x_{f(v)}.\]

Then $\chi(G, \mathfrak{G}, \mathbf{x})$ is a quasisymmetric class function, generalizing the chromatic symmetric function. We wished to determine when $[F_{S, n}] \chi(G, \mathfrak{G}, \mathbf{x})$ is an effective character. There is also a corresponding chromatic polynomial class function $\chi(G, \mathfrak{G}, x).$

A mixed graph is \emph{acyclic} if it does not contain a directed cycle. A \emph{mixed cycle} in $G$ is a cycle in the underlying undirected graph that has at least one directed edge. A \emph{near cycle} is a mixed cycle of length three that has exactly one undirected edge. We prove the following:
\begin{theorem}
Let $G$ be an acyclic mixed graph on $n$ vertices with no near cycles. Let $m(G)$ be the minimum number of undirected edges on a mixed cycle of $G$, with $m(G) = |G|$ if there are no mixed cycles. 

Let $\mathfrak{G}$ act on $G$. Then for $S \subseteq [n-1]$ with $|S| \leq m(G)$, we have \[[F_{S,n}]\chi(G, \mathfrak{G}, \mathbf{x}) \mbox{ is an effective character}. \]

We write $\chi(G, \mathfrak{G}, x) = \sum_{i=0}^n f_{i-2} \binom{x}{i} = \sum_{i=0}^n h_{i-1} \binom{x+n-i}{n}.$
\begin{enumerate}
    \item We have $h_i \geq_{\mathfrak{G}} 0$ for $i \leq \ell$.
    \item For $i \leq d-\ell$, we have $\sum_{j=\ell}^d \binom{i+j-\ell}{i} h_j \geq_{\mathfrak{G}} 0.$
\end{enumerate}

 Suppose that $m(G) \geq \chi(G)$.
\begin{enumerate}
        \item For $i \geq \chi(G)$, we have \[(d-i)f_{i-1} \leq_{\mathfrak{G}} (i-\chi(G)+2)f_i.\]
        \item For $\chi(G) \leq i \leq \lfloor (d+\chi(G)-1)/2 \rfloor$, we have $f_{i-1} \leq_{\mathfrak{G}} f_{i}$. 
    \item For $\chi(G) \leq i \leq \lfloor (d+\chi(G))/2 \rfloor$, we have $f_{i-1} \leq_{\mathfrak{G}} f_{d+\ell-i-1}$. 
\end{enumerate}
\label{thm:mixedgraphs}
\end{theorem}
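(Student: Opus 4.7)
The plan is to associate to $G$ a balanced relative simplicial complex $\Phi_G$ of dimension $n-1$, carrying a compatible $\mathfrak{G}$-action, such that $\Hilb(\Phi_G,\mathfrak{G},\mathbf{x})=\chi(G,\mathfrak{G},\mathbf{x})$, and then to invoke Theorems~\ref{thm:intro1}, \ref{thm:intro2}, and~\ref{thm:intro3}.

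The complex $\Phi_G$ is built from a standard $P$-partition-type template. Its faces encode ordered set partitions $(B_1,\ldots,B_k)$ of a subset of $V=V(G)$ in which each $B_j$ is an independent set for the undirected part of $G$ and the block order respects every directed edge, together with a choice of color labels from $[n]$ for the blocks. Acyclicity of $G$ guarantees that global proper colorings exist, so $\Phi_G$ will be nonempty of dimension $n-1$. Taking the vertex set to consist of labeled blocks $(B,i)$ with $\kappa(B,i)=i$, and carving out the nonpure locus via the relative structure, produces a balanced relative simplicial complex. The action of $\mathfrak{G}$ on $V$ lifts equivariantly as $(B,i)\mapsto(\mathfrak{g}B,i)$, and grouping $\mathfrak{g}$-fixed faces by their underlying ordered partition of $V$ and summing $M_{\kappa(\sigma),n+1}$ recovers $\chi(G,\mathfrak{G},\mathbf{x};\mathfrak{g})$. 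Under this identification the equivariant flag $h$-vector of $\Phi_G$ records the $F_{S,n}$-coefficients of $\chi(G,\mathfrak{G},\mathbf{x})$, and its equivariant $f$- and $h$-vectors record the characters arising in the stated expansion of the chromatic polynomial.

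The hard part is showing that $\Phi_G$ satisfies Serre's condition $(S_{m(G)})$. I will verify Reisner's homological criterion by analyzing links of faces of $\Phi_G$. Such a link is, up to natural equivalence, itself the coloring complex of an induced mixed subgraph $G'\subseteq G$, and every mixed cycle of $G'$ inherits at least $m(G)$ undirected edges from $G$. The key topological step is to show that the coloring complex of an acyclic mixed graph with no near cycles, in which every mixed cycle uses at least $m(G)$ undirected edges, is $(m(G)-2)$-connected. Geometrically a short mixed cycle would produce a sphere-like obstruction of small dimension, and the no-near-cycle hypothesis excludes the degenerate length-three case that would otherwise give unwanted one-dimensional homology. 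A shelling or discrete Morse function on the poset of ordered partitions, guided by a fixed linear extension of the directed part of $G'$, is the natural implementation.

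Once $(S_{m(G)})$ is established, the rest of the theorem follows from the earlier results. Part~(1) follows from Theorem~\ref{thm:intro1} with $\ell=m(G)$, together with the standard change of basis expressing $[F_{S,n}]\chi(G,\mathfrak{G},\mathbf{x})$ as a nonnegative integer combination of the characters $h_R(\Phi_G,\mathfrak{G})$ with $S\subseteq R$. The two $h$-vector inequalities for the chromatic polynomial are Theorem~\ref{thm:intro2} with $\ell=m(G)$. Finally, the $f$-vector inequalities come from Theorem~\ref{thm:intro3} applied with $\ell=\chi(G)$: the vanishing $f_{i-1}(\Phi_G,\mathfrak{G})=0$ for $i<\chi(G)$ is immediate since $G$ admits no proper coloring using fewer than $\chi(G)$ colors, and $(S_{\chi(G)})$ is implied by $(S_{m(G)})$ because $m(G)\geq\chi(G)$ by hypothesis.
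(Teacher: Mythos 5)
Your overall architecture matches the paper's: realize $\chi(G,\mathfrak{G},\mathbf{x})$ as $\Hilb(\Phi(G),\mathfrak{G},\mathbf{x})$ for a balanced relative coloring complex, establish Serre's condition $(S_{m(G)})$, and then cite the general Theorems \ref{thm:intro1}--\ref{thm:intro3}. The reductions in your final paragraph (taking $\ell=m(G)$ for the flag and $h$-vector statements, and $\ell=\chi(G)$ for the $f$-vector statements together with the observation that $f_{i-1}=0$ for $i<\chi(G)$) are essentially the paper's. The problem is that the entire mathematical content of the theorem sits in the step you yourself label the hard part, and you do not prove it. Asserting that a short mixed cycle ``would produce a sphere-like obstruction'' and that ``a shelling or discrete Morse function \ldots is the natural implementation'' is not an argument. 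Worse, a genuine shelling would make $\Phi(G)$ relatively Cohen--Macaulay, i.e.\ satisfy $(S_{d})$, which is false in general when $m(G)$ is small; the whole point of the $(S_\ell)$ refinement is that these complexes need not be Cohen--Macaulay, so the proposed implementation cannot work as stated. The paper instead writes $\Phi(G)=(\Delta(P),\Gamma(G))$ with $\Delta(P)$ the shellable order complex of the ideal lattice of the transitive closure $P$ of the directed edges, uses the no-near-cycle hypothesis to show $\dim\Gamma(G)=\dim\Delta(P)-1$, and proves that $\Gamma(G)$ satisfies $(S_{m(G)-1})$ by induction on the number of undirected edges via the deletion--contraction decomposition $\Gamma(G)=\Gamma(G-e)\cup\Gamma(G\setminus U')$ with intersection isomorphic to $\Gamma(G/e)$, feeding this into Propositions \ref{prop:constructible} and \ref{prop:relative}. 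None of this structure appears in your sketch, so the key claim remains unproved.

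A secondary inaccuracy: the link of a face of $\Phi(G)$ is not ``the coloring complex of an induced mixed subgraph.'' For a chain of ideals $I_1\subset\cdots\subset I_k$ the link is a join of complexes attached to the successive differences $I_{j+1}\setminus I_j$, and since $\Phi(G)$ is a relative complex, Serre's condition concerns the pair $(\lk_{\Delta}(\sigma),\lk_{\Gamma}(\sigma))$ rather than a single complex. So even the reduction you propose before the unproved connectivity claim would need repair. I would also flag that $(S_\ell)$ asks for vanishing of reduced homology of links in a range of degrees, which is weaker than the $(m(G)-2)$-connectivity you invoke; aiming for the homological statement directly, as the paper does via Mayer--Vietoris, is both what is needed and what is actually achievable here.
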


We show that there is a balanced relative simplicial complex $\Phi(G)$ such that \[\Hilb(\Phi(G), \mathfrak{G}, \mathbf{x}) = \chi(G, \mathfrak{G}, \mathbf{x}).\] Under this identification, $f_{-2} = h_{-1} = 0$, and for $i \geq -1$ we have $f_i = f_i(\Phi(G))$, which explains our choice of notation in Theorem \ref{thm:mixedgraphs}. The complex $\Phi(G)$ is a generalization of Steingr\'imsson's coloring complex, and was previously introduced in \cite{white-1}. However, we give a self-contained introduction to this complex. We then show that this complex satisfies Serre's condition $(S_{m(G)}).$ Thus, we obtain a natural collection of balanced relative simplicial complexes which satisfy Serre's condition $(S_k)$ for any $k$.

A double poset $D$ is a set equipped with two partial orders. A double poset is \emph{tertispecial} if, whenever $m \prec_1 m'$, then $m$ and $m'$ are $\leq_2$-comparable. There is a natural quasisymmetric generating function $\Omega(D, \mathbf{x})$ associated to $D$, which was introduced by Grinberg \cite{grinberg} and enumerates $D$-partitions. 

A function $f: N \rightarrow \mathbb{N}$ is a $D$-partition if and only if it satisfies the following two properties:
\begin{enumerate}
    \item For $i \leq_1 j$ in $D$, we have $f(i) \leq f(j)$.
    \item For $i \leq_1 j$ and $j \leq_2 i$ in $D$, we have $f(i) < f(j)$.
\end{enumerate}

When a group $\mathfrak{G}$ acts on $D$, then it also acts on the set of $D$-partitions of a given weight, and we can define a quasisymmetric class function $\Omega(D, \mathfrak{G}, \mathbf{x})$. If we
write $\Omega(D, \mathfrak{G}, \mathbf{x}) = \sum_{S \subseteq {[n-1]}} \chi_{D, \mathfrak{G}, S} F_{S, n}$, then the $\chi_{D, \mathfrak{G}, S}$ are virtual characters. We proved the following:
\begin{theorem}[\cite{white-2}]
Let $D$ be a tertispecial double poset on $n$ elements, and let $\mathfrak{G}$ act on $D$. Given $S \subseteq [n-1]$, we have $[F_{S,n}] \Omega(D, \mathfrak{G}, \mathbf{x})$ is an effective character.
\label{thm:introposet2}
\end{theorem}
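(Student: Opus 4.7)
The plan is to realize $\Omega(D,\mathfrak{G},\mathbf{x})$ as the flag quasisymmetric class function of a balanced relative simplicial complex $\Phi(D)$ with $\mathfrak{G}$-action, and then invoke Theorem~\ref{thm:intro1}. Once we show $\Phi(D)$ is Cohen--Macaulay, the ``in particular'' clause of Theorem~\ref{thm:intro1} gives $h_S(\Phi(D),\mathfrak{G})\geq_{\mathfrak{G}} 0$ for every $S\subseteq[n-1]$, and these characters will be identified with $[F_{S,n}]\Omega(D,\mathfrak{G},\mathbf{x})$.

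First I would construct $\Phi(D)$ from chains of $\leq_1$-order ideals of $D$. Take vertices to be nonempty proper subsets $A\subsetneq N$ colored by $\kappa(A)=|A|\in[n-1]$, and declare a chain $A_1\subsetneq\cdots\subsetneq A_k$ to be a face precisely when each $A_j$ is a $\leq_1$-order ideal of $D$ and each difference $A_{j+1}\setminus A_j$ is a $\leq_2$-antichain. The tertispecial hypothesis ensures that this family is interval-closed, so $\Phi(D)$ really is a relative simplicial complex, and that its faces are in bijection with the level-set encodings of $D$-partitions. Recording the positions at which a $D$-partition strictly increases versus weakly increases then yields the fundamental-basis expansion $\Omega(D,\mathfrak{G},\mathbf{x})=\Hilb(\Phi(D),\mathfrak{G},\mathbf{x})$, with the $\mathfrak{G}$-equivariance a direct consequence of naturality: $\mathfrak{G}$ acts on $N$ preserving both partial orders, hence on the vertices of $\Phi(D)$ preserving $\kappa$, and the trace over $\mathfrak{g}$-fixed $D$-partitions matches the trace over $\mathfrak{g}$-fixed chains.

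The hard part is showing $\Phi(D)$ is Cohen--Macaulay. My approach would be to exhibit an EL-shelling on the face poset of $\Phi(D)$: fix a linear extension $L$ of $\leq_1$ on $D$, and label each cover $A\subset A\cup\{m\}$ appearing in a face by the position of $m$ in $L$. Verifying the EL-labeling axioms reduces to a case analysis governed by how the $\leq_2$-comparisons refine the weak $\leq_1$-conditions; the tertispecial hypothesis, which asserts that every $\leq_1$-comparable pair is $\leq_2$-comparable, is what rules out ambiguous covers and forces each interval to have a unique rising chain. Shellability then yields the Cohen--Macaulay property.

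Putting the pieces together, Theorem~\ref{thm:intro1} applied to $\Phi(D)$ produces the desired effectiveness of $[F_{S,n}]\Omega(D,\mathfrak{G},\mathbf{x})$. The principal obstacle is the EL-shelling (Cohen--Macaulayness) argument, which is sensitive to the exact definition of $\Phi(D)$ and to the interplay of the two partial orders; once this is complete, the remaining work is essentially a dictionary between double-poset enumeration and balanced flag enumeration.
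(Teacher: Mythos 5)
Your reduction is the right one, and it is essentially the paper's strategy at the top level: realize $\Omega(D,\mathfrak{G},\mathbf{x})$ as $\Hilb(\Phi,\mathfrak{G},\mathbf{x})$ for a balanced relative complex of chains of $\leq_1$-order ideals and invoke Theorem~\ref{thm:intro1}. (The paper routes this through the mixed graph $G(D)$ and Theorem~\ref{thm:mixedgraphs}, but the complex is the same.) However, two steps in your sketch do not work as stated. First, the face condition is wrong: the gap $A_{j+1}\setminus A_j$ must contain no \emph{inversion} (a pair $m<_1 m'$ with $m'\leq_2 m$; for tertispecial $D$ this is equivalent to containing no descent), not be a $\leq_2$-antichain. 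Two $\leq_1$-incomparable but $\leq_2$-comparable elements may receive the same value in a $D$-partition, so the antichain condition discards legitimate level sets and the identity $\Omega(D,\mathfrak{G},\mathbf{x})=\Hilb(\Phi(D),\mathfrak{G},\mathbf{x})$ fails.

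Second, and more seriously, the proposed EL-labeling cannot certify the Cohen--Macaulayness of the \emph{relative} complex. If $L$ is any linear extension of $\leq_1$ and covers are labeled by position in $L$, then for every interval $[I,J]$ of $J(\leq_1)$ the $L$-increasing ordering of $J\setminus I$ is automatically a linear extension of $\leq_1|_{J\setminus I}$, so \emph{every} interval has a rising chain; the labeling shells the full order complex $\Delta(J(\leq_1))$ and carries no information about which chains lie in $\Phi(D)$. What is needed is a labeling whose rising chain in $[I,J]$ exists if and only if $J\setminus I$ is inversion-free, which forces $L$ to be a linear extension of the order obtained from the $\leq_1$-Hasse diagram by reversing the descent covers; the tertispecial hypothesis enters precisely to show this reversed relation is acyclic (so such an $L$ exists), and one must then use the \emph{relative} version of lexicographic shellability. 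The paper sidesteps this entirely: it writes $\Phi=(\Delta,\Gamma)$ with $\Delta$ the (shellable) order complex of $J(\leq_1)$ and proves that $\Gamma$ satisfies the appropriate Serre condition by induction on the undirected (descent) edges using Mayer--Vietoris (Propositions~\ref{prop:relative} and~\ref{prop:constructible}). Either route can be made to work, but as written your shelling step is vacuous and must be replaced by one of these arguments.
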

This paper gives an alternate proof.
 We show that, given a double poset $P$, there is a mixed graph $G$ such that $\Omega(P, \mathfrak{G}, \mathbf{x}) = \chi(G(P), \mathfrak{G}, \mathbf{x}).$ Thus we are able to determine results about double posets from corresponding results on mixed graphs.

The paper is organized as follows. First, we give an overview of quasisymmetric class functions in Section \ref{sec:prelim}. In Section \ref{sec:geometry}, we review the definition of balanced relative simplicial complexes, and discuss the equivariant flag $f$-vector. Then we discuss the Stanley-Reisner module and the equivariant $f$-vector. Then we connect groups actions on balanced relative simplicial complexes to characters on cohomology groups. In Section \ref{sec:serre}, we study Serre's condition, and then prove the various results mentioned in the introduction. In Section \ref{sec:mixedgraph}, we study mixed graphs and their chromatic quasisymmetric class functions. Finally, in Section \ref{sec:dbl} we study double posets.

\section{Preliminaries}
\label{sec:prelim}
Given a basis $B$ for a vector space $V$ over $\mathbb{C}$, and $\vec{\beta} \in B, \vec{v} \in V$, we let $[\vec{\beta}] \vec{v}$ denote the coefficient of $\vec{\beta}$ when we expand $\vec{v}$ in the basis $B$.

Let $\mathbf{x} = x_1, x_2, \ldots $ be a sequence of commuting indeterminates. Recall that an \emph{integer composition} $\alpha$ of a positive integer $n$ is a sequence $(\alpha_1, \ldots, \alpha_k)$ of positive integers such that $\alpha_1+\cdots + \alpha_k = n$. We write $\ell(\alpha) = k$, and $\alpha \models n$. Let $n \in \mathbb{N}$ and let $F(\mathbf{x}) \in \mathbb{C}[[\mathbf{x}]]$ be a homogeneous formal power series in $\mathbf{x}$, where the degree of every monomial in $F(\mathbf{x})$ is $n$. Then $F(\mathbf{x})$ is a \emph{quasisymmetric function} if it satisfies the following property:
for every $i_1 < i_2 < \cdots < i_k$, and every integer composition $\alpha \models n$ with $\ell(\alpha) = k$, we have $[\prod_{j=1}^k x_{i_j}^{\alpha_j}]F(\mathbf{x}) = [\prod_{j=1}^k x_j^{\alpha_j}]F(\mathbf{x})$. Often, we will define quasisymmetric functions that are generating functions over a collection of functions. Given a function $w: S \rightarrow \mathbb{N}$, we define 
\begin{equation}
    \mathbf{x}^w = \prod_{v \in S} x_{w(v)}.
    \label{eq:xf}
\end{equation} For example, the chromatic symmetric function of a graph $G$ is defined as $\chi(G, \mathbf{x}) = \sum\limits_{f:V \rightarrow \mathbb{N}} \mathbf{x}^f$ where the sum is over all proper colorings of $G$.

Given a subset $\{s_1, \ldots, s_k \} \subseteq [n-1]$, with $s_1 < s_2 < \cdots < s_k,$ we let \[M_{\{s_1, \ldots, s_k\}, n} = \sum\limits_{i_1 < \cdots < i_{k+1}} \prod_{j=1}^{k+1} x_{i_j}^{s_j - s_{j-1}}\] where we define $s_0 = 0$ and $s_{k+1} = n$. These are the \emph{monomial quasisymmetric functions}, which form a basis for the vector space of quasisymmetric functions of degree $n$. This basis is partially ordered, by saying $M_{S, n} \leq M_{T,n}$ if $S \subseteq T.$

The second basis we focus on is the basis of fundamental quasisymmetric functions, first introduced by Gessel \cite{gessel}.
The \emph{fundamental quasisymmetric functions} $F_{S,d}$ are defined by:
\[ F_{S, d} = \sum\limits_{T: S \subseteq T} M_{T, d}. \]

\subsection{Group actions and class functions}
\label{subsec:groupaction}

 Given a group action $\mathfrak{G}$ on a set $X$, we let $X / \mathfrak{G}$ denote the set of orbits. For $x \in X$, $\mathfrak{G}_x$ is the stabilizer subgroup, and $\mathfrak{G}(x)$ is the orbit of $x$. Finally, for $\mathfrak{g} \in \mathfrak{G}$, we let $\Fix_{\mathfrak{g}}(X) = \{x \in X: \mathfrak{g}x = x \}$.

We assume familiarity with the theory of complex representations of finite groups - see \cite{fulton-harris} for basic definitions. Recall that, given any group action of $\mathfrak{G}$ on a finite set $X$, there is a group action on $\mathbb{C}^X$ as well, which gives rise to a representation. The resulting representations are called \emph{permutation representations}. Given any $\mathfrak{G}$-set $X$, or $\mathfrak{G}$-module $V$, we let $\chi_{X, \mathfrak{G}}$ and $\chi_{V, \mathfrak{G}}$ denote the corresponding characters.
Let $R$ be a $\mathbb{C}$-algebra. Then an \emph{$R$-valued class function} is a function $\chi: \mathfrak{G} \rightarrow R$ such that, for every $\mathfrak{g}, \mathfrak{h} \in \mathfrak{G}$, and $\chi \in C(\mathfrak{G}, R)$, we have $\chi(\mathfrak{hg}\mathfrak{h}^{-1}) = \chi(\mathfrak{g}).$ Let $C(\mathfrak{G}, R)$ be the set of $R$-valued class functions from $\mathfrak{G}$ to $R$. For our paper, $R$ is usually $\qsym$ or $\mathbb{C}[x]$. We refer to $\chi \in C(\mathfrak{G}, \mathbb{C})$ as class functions when no confusion arises.

There is an orthonormal basis of $C(\mathfrak{G}, \mathbb{C})$ given by the characters of the irreducible representations of $\mathfrak{G}$. We refer to elements $\chi \in C(\mathfrak{G}, \mathbb{C})$ that are integer combinations of irreducible characters as \emph{virtual characters}, and elements that are nonnegative integer linear combinations as \emph{effective characters}. Let $E(\mathfrak{G}, \mathbb{C})$ be the set of effective characters. Finally, we say $\chi$ is a \emph{permutation character} if it is the character of a permutation representation. We partially order $E(\mathfrak{G}, \mathbb{C})$ by saying $\chi \leq_{\mathfrak{G}} \psi$ if $\psi - \chi$ is an effective character.

Let $\mathbf{B}$ be a basis for $R$. For $b \in \mathbf{B}$, $\mathfrak{g} \in \mathfrak{G}$, and $\chi \in C(\mathfrak{G}, R)$, let $\chi_b(\mathfrak{g}) = [b]\chi(\mathfrak{g})$. Then $\chi_b$ is a $\mathfrak{C}$-valued class function. Thus we can write $\chi = \sum\limits_{b \in \mathbf{B}} \chi_b b$. Conversely, given a family $\chi_b$ of $\mathfrak{C}$-valued class functions, one for each $b \in \mathbf{B}$, the function $\chi$ defined by $\chi(\mathfrak{g}) = \sum\limits_{b \in \mathbf{B}} \chi_b(\mathfrak{g}) b$ is an $R$-valued class function in $C(\mathfrak{G}, R)$.

Let $\chi$ be an $R$-valued class function. We say that $\chi$ is \emph{$\textbf{B}$-realizable} if $\chi_b$ is a permutation character for all $b$.

A \emph{quasisymmetric class function} is a $\qsym$-valued class function. Given a quasisymmetric class function $F(\mathfrak{G}, \mathbf{x})$, we write $F(\mathfrak{G}, \mathbf{x}) = \sum_{S \subseteq [n-1]} f_{S, \mathfrak{G}} M_{S, n}$, where $f_{S, \mathfrak{G}} \in C(\mathfrak{G}, \mathbb{C})$.

Finally, we define a function $\langle \cdot, \cdot \rangle: C(\mathfrak{G}, R) \times C(\mathfrak{G}, R) \rightarrow R$ by 
$\langle \chi, \psi \rangle = \frac{1}{|\mathfrak{G}|} \overline{\chi}(\mathfrak{g}) \psi(\mathfrak{g})$ where $\overline{x}$ is the complex conjugate. In the case where $R = \mathbb{C}$, this is the usual inner product on class functions.

\begin{proposition}
\label{prop:global} 
Let $\mathfrak{G}$ be a finite group, let $R$ be a $\mathbb{C}$-algebra with basis $\mathbf{B}$. Fix $\chi \in C(\mathfrak{G}, R)$.
\begin{enumerate}[label={(\arabic*)},itemindent=1em]
\item Given an irreducible character $\psi$, we have $\langle \chi, \psi \rangle = \sum\limits_{b, c \in B} \langle \chi_b, \psi_c \rangle b \cdot c$.
\item 
Let $\psi \in C(\mathfrak{G}, \mathbb{C})$. If $\chi$ is $B$-realizable, then for all $b  \in \textbf{B}$ we have $[b]\langle \psi, \chi \rangle \geq 0.$  \label{prop:charincreasing}
\end{enumerate}
\end{proposition}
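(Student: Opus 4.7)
The plan is to reduce both parts of the proposition to elementary facts about ordinary complex-valued inner products, by expanding $\chi$ (and $\psi$) in the basis $\mathbf{B}$ and using bilinearity of $\langle \cdot, \cdot \rangle$.

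For part (1), I would substitute the decompositions $\chi(\mathfrak{g}) = \sum_{b \in \mathbf{B}} \chi_b(\mathfrak{g})\, b$ and $\psi(\mathfrak{g}) = \sum_{c \in \mathbf{B}} \psi_c(\mathfrak{g})\, c$ directly into the defining formula $\langle \chi, \psi \rangle = \frac{1}{|\mathfrak{G}|}\sum_{\mathfrak{g}} \overline{\chi(\mathfrak{g})}\,\psi(\mathfrak{g})$. Since the conjugation on $R$ fixes each element of $\mathbf{B}$ in the settings of interest (where $R$ is $\qsym$ or a polynomial ring and $\mathbf{B}$ consists of real quasisymmetric functions or real polynomials), I can interchange the sum over $\mathfrak{G}$ with the sums over $b$ and $c$. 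The coefficient of $b \cdot c$ then becomes $\frac{1}{|\mathfrak{G}|}\sum_{\mathfrak{g}} \overline{\chi_b(\mathfrak{g})}\,\psi_c(\mathfrak{g}) = \langle \chi_b, \psi_c \rangle$, yielding the claimed formula.

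For part (2), the same bilinearity gives $\langle \psi, \chi \rangle = \sum_{b \in \mathbf{B}} \langle \psi, \chi_b \rangle\, b$, so that $[b]\langle \psi, \chi \rangle = \langle \psi, \chi_b \rangle$ is an ordinary inner product of complex-valued class functions. Since $\chi$ is $\mathbf{B}$-realizable, each $\chi_b$ is a permutation character; and provided $\psi$ is itself a character (as appears to be intended in the statement, since the stated nonnegativity can fail for an arbitrary class function), the inner product $\langle \psi, \chi_b \rangle$ is the sum over irreducibles of products of two nonnegative multiplicities, hence a nonnegative integer.

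The main obstacle is essentially notational rather than mathematical: one has to check that complex conjugation on $R$ acts trivially on $\mathbf{B}$ in the intended applications, and to identify the implicit hypothesis on $\psi$ in part (2) that makes the inequality meaningful. Beyond these conventions, the proof is a short unwinding of definitions together with the standard fact that the inner product of two characters is a nonnegative integer.
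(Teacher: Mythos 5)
Your proof is correct and takes essentially the same route as the paper's: part (1) by expanding $\overline{\chi}(\mathfrak{g})$ and $\psi(\mathfrak{g})$ in the basis $\mathbf{B}$ and interchanging sums (with conjugation acting only on the coefficients), and part (2) by identifying $[b]\langle \psi, \chi \rangle$ with the ordinary inner product $\langle \psi, \chi_b \rangle$ of $\psi$ with the permutation character $\chi_b$. You are also right to flag the hypothesis on $\psi$: as stated the inequality can fail for an arbitrary element of $C(\mathfrak{G}, \mathbb{C})$, and the paper's own argument indeed takes $\psi$ to be an irreducible character.
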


\subsection{Polynomial class functions and Principal specialization}
Given a polynomial $p(x)$ of degree $d$, define the $f$-vector $(f_{-1}, \ldots, f_{d-1})$ via $p(x) = \sum_{i=0}^d f_{i-1} \binom{x}{i}$. We say that $p(x)$ is \emph{strongly flawless} if the following inequalities are satisfied:
\begin{enumerate}
    \item for $0 \leq i \leq \frac{d-1}{2}$, we have $f_{i-1} \leq f_{i}$.
    \item For $0 \leq i \leq \frac{d}{2}$, we have $f_{i-1} \leq f_{d-i-1}$.
\end{enumerate}
There is a lot of interest in log-concave and unimodal sequences in combinatorics. We consider strongly flawless sequences to be interesting, as strongly flawless unimodal sequences can be seen as a generalization of symmetric unimodal sequences. Examples of results with strongly flawless sequences include the work of Hibi \cite{hibi} and Juhnke-Kubitzke and Van Le \cite{kubitzke}.
We denote the entries of the $f$-vector as $f_i(p(x))$ when discussing multiple polynomials.

There is a generalization of $f$-vector for polynomial class functions, which we call the \emph{equivariant $f$-vector}. Given a group $\mathfrak{G}$, and a polynomial class function $p(\mathfrak{G}, x)$, we write $p(\mathfrak{G}, x) = \sum_{i=1}^d f_{i-1} \binom{x}{i}$, where the $f_i$ are characters.  
We say that $p(\mathfrak{G}, x)$ is \emph{effectively flawless} if we have the following system of inequalities:
\begin{enumerate}
    \item For $0 \leq i \leq \frac{d-1}{2}$, we have $f_{i-1} \leq_{\mathfrak{G}} f_{i}$.
    \item For $0 \leq i \leq \frac{d}{2}$, we have $f_{i-1} \leq_{\mathfrak{G}} f_{d-i-1}$.
\end{enumerate}

Given a quasisymmetric function $F(\mathbf{x})$ of degree $d$, there is an associated polynomial $\ps(F)(x)$ given by principal specialization. For $x \in \mathbb{N}$, we set \[x_i = \begin{cases} 1 & i \leq x \\ 0 & i > x
\end{cases} \]
The resulting sequence is a polynomial function in $x$ of degree $d$, which we denote by $\ps(F)(x)$. 
If we write $F(\mathbf{x}) = \sum\limits_{S \subseteq [d-1]} c_{S,d} M_{S,d}$, then $f_i(\ps(F(\mathbf{x}))) = \sum\limits_{S \subseteq [d-1]: |S| = i+1} c_{S, d}$.

Let $F(\mathfrak{G}, \mathbf{x})$ be a quasisymmetric class function of degree $d$, and $\mathfrak{g} \in \mathfrak{G}$. Define $\ps(F) \in C(\mathfrak{G}, \mathbb{C}[x])$ by $\ps(F)(x;\mathfrak{g}) = \ps(F(\mathbf{x}; \mathfrak{g}))(x).$ We refer to $\ps(F)$ as the principal specialization, which results in an polynomial class function.

The following results were obtained in \cite{white-2}.
\begin{proposition}
\label{prop:global2}
Let $F(\mathfrak{G}, \mathbf{x})$ be a quasisymmetric class function be of degree $d$, and $\mathfrak{g} \in \mathfrak{G}$.  
\begin{enumerate}[label={(\arabic*)},itemindent=1em]
    \item If we write $F(\mathfrak{G}, \mathbf{x}) = \sum\limits_{S \subseteq \models [d-1]} \chi_{S, d} M_{S, d}$, then $\ps(F) = \sum\limits_{i=0}^d \chi_{S, d} \binom{x}{|S|+1}$.
    \item If $F(\mathfrak{G}, \mathbf{x})$ is $M$-realizable and $M$-increasing, then $\ps(F)$ is effectively flawless. \label{prop:ftohorbit}
\end{enumerate}
\end{proposition}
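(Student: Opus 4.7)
The plan is to prove part (1) by a direct calculation of $\ps(M_{S,d})$ and then deduce part (2) by combining part (1) with a symmetric chain decomposition of the Boolean lattice.

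For part (1), I would fix $\mathfrak{g} \in \mathfrak{G}$ and compute $\ps(M_{S,d})(x)$ for each $S = \{s_1 < \cdots < s_k\} \subseteq [d-1]$. Under the specialization $x_i \mapsto 1$ for $i \leq x$ and $x_i \mapsto 0$ otherwise, the monomial $\prod_j x_{i_j}^{s_j - s_{j-1}}$ collapses to the indicator that every $i_j$ lies in $[x]$, independently of the exponents. Hence $M_{S,d}$ evaluates to the number of strictly increasing tuples $i_1 < \cdots < i_{k+1}$ in $[x]$, which equals $\binom{x}{|S|+1}$. Writing $F(\mathfrak{G}, \mathbf{x}; \mathfrak{g}) = \sum_S \chi_{S,d}(\mathfrak{g})\, M_{S,d}$ and applying $\ps$, which is $\mathbb{C}$-linear and acts pointwise in $\mathfrak{g}$, yields the claimed formula.

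For part (2), part (1) identifies the equivariant $f$-vector of $\ps(F)$ as $f_k = \sum_{|S|=k} \chi_{S,d}$, where $S$ ranges over subsets of $[d-1]$. Effective flawlessness then reads
\[\sum_{|S|=i-1} \chi_{S,d} \leq_{\mathfrak{G}} \sum_{|T|=i} \chi_{T,d} \quad \text{and} \quad \sum_{|S|=i-1} \chi_{S,d} \leq_{\mathfrak{G}} \sum_{|T|=d-i-1} \chi_{T,d}\]
in the respective ranges. To produce each inequality, I would fix a symmetric chain decomposition of the Boolean lattice $B_{d-1}$ (for instance, via the standard parenthesis-matching construction). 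Each such chain runs from some rank $a$ to rank $d-1-a$, with exactly one element at each intermediate rank. Whenever $k \leq k' \leq d-1-k$, any chain containing an element of rank $k$ also contains an element of rank $k'$, so restricting each chain to those two ranks defines an injection $\phi$ from the rank-$k$ subsets of $[d-1]$ to the rank-$k'$ subsets with $S \subseteq \phi(S)$ throughout.

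Given $\phi$, the $M$-increasing hypothesis gives $\chi_{\phi(S),d} - \chi_{S,d} \geq_{\mathfrak{G}} 0$ for every $S$, and summing across all $S$ in the smaller rank yields the desired inequality; the rank-$k'$ subsets not in the image of $\phi$ contribute characters that are effective by $M$-realizability combined with $M$-increasing (each such character dominates $\chi_{\emptyset,d}$, which is a permutation character). The routine bookkeeping is that the index ranges match: for the first inequality we need $\phi\colon\text{rank }(i-1) \hookrightarrow \text{rank }i$, which exists for $i \leq d/2$; for the second we need $\phi\colon\text{rank }(i-1) \hookrightarrow \text{rank }(d-i-1)$, which also needs $i \leq d/2$; both are weaker than the stated hypotheses. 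The only step with any real content beyond bookkeeping is the appeal to the symmetric chain decomposition to produce an injection compatible with set inclusion.
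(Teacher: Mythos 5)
Your proof is correct. Note that the paper does not actually prove this proposition in this source—it is cited from \cite{white-2}—so there is no in-paper proof to compare against; however, a commented-out proof of a closely related $M$-increasing result for $\Hilb(\Phi,\mathfrak{G},\mathbf{x})$ in this same file uses exactly the symmetric chain decomposition device you describe, so your route almost certainly matches the intended one. The computation in part (1) is right (under the specialization, the surviving terms of $M_{S,d}$ are in bijection with $(|S|+1)$-subsets of $[x]$), and your restatement correctly fixes the typo in the displayed formula, where the sum should run over $S \subseteq [d-1]$ rather than $i$.

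Two minor remarks. First, your argument that the rank-$k'$ characters missed by $\phi$ are effective is more circuitous than needed: $M$-realizability says directly that each $\chi_{T,d}$ is a permutation character, hence effective, with no need to compare against $\chi_{\emptyset,d}$ or to invoke $M$-increasing for that step. Second, for the second flawlessness inequality the range $i \leq d/2$ in which the SCD injection $\phi \colon \binom{[d-1]}{i-1} \hookrightarrow \binom{[d-1]}{d-i-1}$ exists coincides with the hypothesized range rather than being strictly weaker; this has no bearing on correctness, but the phrase ``both are weaker'' overstates it slightly.
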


\begin{proposition}
Let $(f_{-1}, \ldots, f_{d-1})$ be a sequence of characters for $\mathfrak{G}$. Suppose that there exists an integer $0 \leq r \leq d$ such that, for all $i$, we have \[ (d-i)f_{i-1} \leq_{\mathfrak{G}} (i-r)f_{i}. \]
Then the sequence $(f_{r-1}, \ldots, f_{d-1})$ is effectively flawless.
\label{prop:flawlesssequence}
\end{proposition}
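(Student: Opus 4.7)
The plan is to first use the hypothesis to force the early characters in the sequence to vanish, then reindex and prove the two effectively-flawless inequalities via two different mechanisms: a one-step rearrangement for the unimodal inequality, and a descending induction from the middle of the sequence for the symmetry inequality.

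I would begin by showing $f_{-1} = f_0 = \cdots = f_{r-1} = 0$. For each $i$ with $0 \leq i \leq r$, the hypothesis says $(i-r)f_i - (d-i)f_{i-1}$ is an effective character; but since $i - r \leq 0$ and $d - i > 0$ while $f_i, f_{i-1}$ are themselves effective, this virtual character has non-positive value at the identity. An effective character with non-positive value at the identity must vanish, and running through $i = 0, 1, \ldots, r$ in turn forces all of $f_{-1}, \ldots, f_{r-1}$ to be zero. I would then set $d' = d - r$ and reindex by $h_j = f_{r+j-1}$ for $0 \leq j \leq d'$, so that $h_0 = 0$ and the hypothesis reduces to
\[ (d'-j)\, h_j \leq_{\mathfrak{G}} j\, h_{j+1} \quad \text{for all } j \geq 0. \]
The goal then becomes showing that $(h_0, h_1, \ldots, h_{d'})$ is effectively flawless as the $f$-vector of a degree-$d'$ polynomial class function.

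For the unimodal condition $h_i \leq_{\mathfrak{G}} h_{i+1}$ in the range $0 \leq i \leq (d'-1)/2$, the trick is to write
\[ i(h_{i+1} - h_i) = \bigl[i\, h_{i+1} - (d'-i) h_i\bigr] + (d' - 2i)\, h_i. \]
The first bracketed term is effective by hypothesis, and $(d'-2i) h_i$ is effective because $d' - 2i \geq 1$ in this range and $h_i$ is an effective character. Thus $i(h_{i+1} - h_i)$ is effective, and since a positive integer multiple of a virtual character is effective if and only if the character itself is, $h_{i+1} - h_i$ is effective.

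For the symmetry condition $h_i \leq_{\mathfrak{G}} h_{d'-i}$ in the range $0 \leq i \leq d'/2$, I would use descending induction on $i$ starting from $\lfloor d'/2 \rfloor$. The base case is trivial if $d'$ is even and follows from the unimodal inequality at the center if $d'$ is odd. For the inductive step at $i \geq 1$, combining the hypothesis at $j = d'-i-1$ with the inductive assumption $h_{i+1} \leq_{\mathfrak{G}} h_{d'-i-1}$ yields $(d'-i-1) h_{d'-i} \geq_{\mathfrak{G}} (i+1) h_{i+1}$. Multiplying this by $i$ and the hypothesis at $j = i$ by $(i+1)$ and summing, then using the elementary identity $(i+1)(d'-i) = i(d'-i-1) + d'$, produces
\[ i(d'-i-1)\,(h_{d'-i} - h_i) \geq_{\mathfrak{G}} d'\, h_i \geq_{\mathfrak{G}} 0. \]
Cancelling the positive scalar $i(d'-i-1)$ then gives $h_{d'-i} \geq_{\mathfrak{G}} h_i$. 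The boundary case $i = 0$ is covered by $h_0 = 0$, and the small values of $d'$ (where $d'/2 > d'-2$) are easily checked by hand from the unimodal inequality.

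The main obstacle is the symmetry condition. The hypothesis only provides a one-sided lower bound for $h_{j+1}$ in terms of $h_j$, which is too weak in the upper half of the sequence to yield any monotonicity there, so a naive chaining of inequalities fails. What makes the inductive step work is the arithmetic identity $(i+1)(d'-i) - i(d'-i-1) = d'$, which is exactly what lines up the cross-terms so that after combination only a non-negative scalar multiple of the effective character $h_i$ remains on the right-hand side.
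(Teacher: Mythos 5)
Your proof is correct. The overall skeleton coincides with the paper's: the paper reduces to the case $r=0$ by induction on $r$, shifting indices one step at a time, which is the same normalization as your one-shot reindexing $h_j = f_{r+j-1}$; and its proof of the unimodal inequality is exactly your decomposition $i(h_{i+1}-h_i) = \bigl[i\,h_{i+1}-(d'-i)h_i\bigr] + (d'-2i)h_i$ followed by cancelling a positive integer scalar from a virtual character. Where you genuinely diverge is the symmetry inequality $h_i \leq_{\mathfrak{G}} h_{d'-i}$: the paper telescopes the hypothesis multiplicatively into $\binom{d'-i}{j-i}\,h_i \leq_{\mathfrak{G}} \binom{j}{j-i}\,h_j$ for all $j \geq i$ (in your indexing) and then sets $j = d'-i$, where the two binomial coefficients agree; you instead run a descending induction from the centre, combining only two instances of the hypothesis via the identity $(i+1)(d'-i) = i(d'-i-1)+d'$. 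Both are elementary; the paper's telescoping yields the whole family of comparisons between $h_i$ and $h_j$ in one line, while your version isolates exactly which instances of the hypothesis are used at each step and makes explicit the (correct, and unremarked in the paper) fact that the hypothesis forces $f_{-1} = \cdots = f_{r-1} = 0$. The only loose end is the degenerate case $r = d$, where your vanishing argument cannot reach $f_{r-1}$ because the coefficient $d-i$ is zero at $i=r$; but there the truncated sequence has length one and the conclusion is vacuous, so nothing is lost.
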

\begin{proof}
We prove the result by induction on $r$. First, suppose that $r = 0$.

We see that $(i+1) \leq (d-i)$. Thus we have $(i+1)f_i \leq_{\mathfrak{G}} (d-i)f_{i-1} \leq_{\mathfrak{G}} (i+1)f_{i}$. Hence $(i+1)(f_{i}-f_{i-1})$ is an effective character. This implies that $(i+1)c_j$ is a non-negative integer for all $j$, which means $c_j$ is a non-negative rational number. Therefore $f_{i} - f_{i-1}$ is an effective character.

Now let $i \leq \lfloor d / 2 \rfloor$. We can generalize the inequality $(d-i)f_{i-1} \leq_{\mathfrak{G}} if_{i}$ to obtain $\binom{d-i}{j-i} f_{i-1} \leq_{\mathfrak{G}} \binom{j}{j-i}f_{j-1}$ whenever $i \leq j$. When we set $j = d-i$, then $\binom{d-i}{j-i} = \binom{j}{j-i}$, and thus we get $f_{i-1} \leq_{\mathfrak{G}} f_{d-i-1}$.

Now suppose that $r > 0$. Let $g_i = f_{i+1}$. Then we see that \[(d-1-i)g_{i-1} = (d-(i+1))f_{i} \leq_{\mathfrak{G}} (i+1-r)f_{i+1} = (i+(r-1))g_{i}.\] Thus, by the induction hypothesis, we have $(g_{r-2}, \ldots, g_{d-1})$ is effectively flawless. However, $(g_{r-2}, \ldots, g_{d-2}) = (f_{r-1}, \ldots, f_{d-1})$.
\end{proof}

\section{Balanced Relative Simplicial Complexes}
\label{sec:geometry}
Now we discuss balanced relative simplicial complexes, and their flag quasisymmetric class functions.

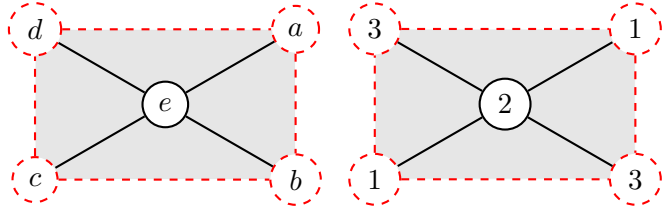
\begin{figure}
\begin{tabular}{cc}
\begin{tikzpicture}
\draw[color=white, fill=gray!20] (30:2cm) -- (330:2cm) -- (210:2cm) -- (150:2cm) -- cycle;
  \node[circle, draw=red, fill=white, dashed, thick] (b) at (150:2cm) {$d$};
  \node[circle, draw=red, fill=white, dashed, thick] (a) at (210:2cm) {$c$};
  \node[circle, draw=red, fill=white, dashed, thick] (e) at (330:2cm) {$b$};
  \node[circle, draw=red, fill=white,dashed, thick] (d) at (30:2cm) {$a$};
  \node[circle, draw=black, fill=white,thick] (or) at (0:0cm) {$e$};
  \draw[dashed, red, thick] (a) -- (b);
  \draw[dashed, red, thick] (d) -- (e);

  \draw[dashed, red, thick] (b) -- (d);
  \draw[dashed, red, thick] (a) -- (e);
  \draw[thick] (b) -- (or) -- (d);
  \draw[thick] (a) -- (or) -- (e);

\end{tikzpicture}
&
\begin{tikzpicture}
\draw[color=white, fill=gray!20] (30:2cm) -- (330:2cm) -- (210:2cm) -- (150:2cm) -- cycle;
  \node[circle, draw=red, fill=white, dashed, thick] (b) at (150:2cm) {$3$};
  \node[circle, draw=red, fill=white, dashed, thick] (a) at (210:2cm) {$1$};
  \node[circle, draw=red, fill=white, dashed, thick] (e) at (330:2cm) {$3$};
  \node[circle, draw=red, fill=white,dashed, thick] (d) at (30:2cm) {$1$};
  \node[circle, draw=black, fill=white,thick] (or) at (0:0cm) {$2$};
  \draw[dashed, red, thick] (a) -- (b);
  \draw[dashed, red, thick] (d) -- (e);

  \draw[dashed, red, thick] (b) -- (d);
  \draw[dashed, red, thick] (a) -- (e);
  \draw[thick] (b) -- (or) -- (d);
  \draw[thick] (a) -- (or) -- (e);

\end{tikzpicture}
\end{tabular}
    \caption{A coloring complex $\Phi$. Dashed lines correspond to faces that are not in $\Phi$.}
    \label{fig:coloringcomplex}
\end{figure}

\begin{definition}
A \emph{balanced relative simplicial complex} of dimension $d-1$ on a vertex set $V$ is a non-empty collection $\Phi$ of subsets of $V$, along with a function $\kappa: V \rightarrow [d]$ with the following properties:
\begin{enumerate}
    \item For every $\rho \subseteq \sigma \subseteq \tau$, if $\rho, \tau \in \Phi$, then $\sigma \in \Phi$.
    \item For every $\rho \in \Phi$, there exists $\sigma \in \Phi$ such that $\rho \subseteq \sigma$ and $|\sigma| = d$,
    \item For every $\rho \in \Phi$, we have $ \{\kappa(v): v \in \rho \}$ has size $|\rho|$.
\end{enumerate}
\end{definition}
The name comes from the fact that there exists simplicial complexes
$(\Delta, \Gamma)$ with $\Gamma \subseteq \Delta$, and $\Phi = \Delta \setminus \Gamma$. Given $\sigma \in \Phi$, we let $\kappa(\sigma) = \{\kappa(v): v \in \sigma \}$.

 Given $S \subseteq [d]$, we let $f_S(\Phi)$ denote the number of faces $\sigma$ such that $\kappa(\sigma) = S$. This is the \emph{flag $f$-vector} of $\Phi$. We encode the flag $f$-vector with a quasisymmetric function 
\[ \Hilb(\Phi, \mathbf{x})= \sum_{S \subseteq [d]} f_S(\Phi) M_{S, d+1}.\]
 This is the \emph{flag quasisymmetric function} associated to $\Phi$. It has degree $d+1$.

Let $V(\Phi)$ be the vertex set of $\Phi$. A bijection $\mathfrak{g}: V \rightarrow V$ is an \emph{automorphism} of $\Phi$ if it satisfies the following two properties:
\begin{enumerate}
    \item For every $v \in V$, we have $\kappa(\mathfrak{g}v) = \kappa(v)$.
    \item For every $\{v_1, \ldots, v_k \} \in \Phi$, we have $\{\mathfrak{g}(v_1), \ldots, \mathfrak{g}(v_k) \} \in \Phi$.
\end{enumerate}

Let $\mathfrak{G}$ be a group which acts on $V(\Phi)$. Then $\mathfrak{G}$ also acts on $2^{V(\Phi)}$ by $\mathfrak{g}(S) = \{\mathfrak{g}v: v \in S \}$. Suppose that, for each $\mathfrak{g} \in \mathfrak{G}$, the resulting action on $2^{V(\Phi)}$ is an automorphism of $\Phi$. Then we say $\mathfrak{G}$ acts on $\Phi$.
For $S \subseteq [d]$, let $\kappa^{-1}(S) = \{\sigma \in \Phi: \kappa(\sigma) = S \}$. Then $\mathfrak{G}$ acts on $\kappa^{-1}(S)$ as well. Given $\mathfrak{g} \in \mathfrak{G}$, we define 
\[ \Hilb(\Phi, \mathfrak{G}, \mathbf{x}; \mathfrak{g}) = \sum_{\sigma \in \Phi: \mathfrak{g}\sigma = \sigma}  M_{\kappa(\sigma), d+1}. \]
As we vary $\mathfrak{g}$, we obtain a quasisymmetric class function, which we call
 the \emph{flag quasisymmetric class function} of $(\Phi, \mathfrak{G})$, and denote $\Hilb(\Phi, \mathfrak{G}, \mathbf{x})$.

As an example, consider the balanced relative simplicial complex $\Phi$ with vertex set $\{a,b,c,d,e\}$ appearing on the left in Figure \ref{fig:coloringcomplex}. Here we note that \[\Phi = \{ \{e\}, \{a,e\}, \{b,e\}, \{c,e\}, \{d,e\}, \{a,b,e\}, \{b,c,e\}, \{c,d,e\}, \{a,d,e\} \}.\] The coloring $\kappa$ is given on the right in Figure \ref{fig:coloringcomplex}. Then $\Phi$ is a balanced relative simplicial complex. We see that $\Aut(\Phi)$ is isomorphic to $\mathbb{Z}/2\mathbb{Z} \times \mathbb{Z}/2\mathbb{Z}$. We let $\mathbb{Z}/2\mathbb{Z}$ act as the permutation $(ac)(bd)(e).$ Then \[\Hilb(\Phi, \mathbb{Z}/2\mathbb{Z}, \mathbf{x}) = M_{\{2\},4}+\rho(M_{\{1,2\},4}+M_{\{2,3\},4}+2M_{\{1,2,3\},4}).\]

We denote the coefficients of $[M_{S,d}]\Hilb(\Phi, \mathfrak{G}),$ by $f_S(\Phi, \mathfrak{G})$ and refer to them as the \emph{equivariant flag $f$-vector} of $\Phi$. These coefficients are permutation characters. Similarly, we can write \[\Hilb(\Phi, \mathfrak{G}, \mathbf{x}) = \sum_{S \subseteq [d]} h_S(\Phi, \mathfrak{G}) F_{S,d+1},\] where the $h_S(\Phi, \mathfrak{G})$ are virtual characters, which we call the \emph{equivariant flag $h$-vector}.

Since $\mathfrak{G}$ acts on $\Phi$, we can say two faces $\sigma$ and $\tau$ are $\mathfrak{G}$-equivalent if $\sigma = \mathfrak{g}\tau$ for $\mathfrak{g} \in \mathfrak{G}$. We see that $\kappa(\tau) = \kappa(\sigma)$. For $C \in \Phi/\mathfrak{G}$, we define $\kappa(C) = \kappa(\sigma)$ for any $\sigma \in C$. We see that $\kappa$ is well-defined on $\Phi/\mathfrak{G}$. We 
let \[\Hilb^O(\Phi, \mathfrak{G}, \mathbf{x}) = \sum_{C \in \Phi/\mathfrak{G}} M_{\kappa(C), d+1} \]
be the \emph{orbital flag quasisymmetric function}.

\begin{proposition}
Let $\Phi$ be a balanced relative simplicial complex of dimension $d-1$. Suppose that $\mathfrak{G}$ acts on $\Phi$. 
\begin{enumerate}
    \item Let $S \subseteq [d]$. Then $f_S(\Phi, \mathfrak{G}) = \chi_{\kappa^{-1}(S), \mathfrak{G}}$.
    \item For $S \subseteq [d]$, we have $f^O_S(\Phi, \mathfrak{G}) = |\kappa^{-1}(S)/\mathfrak{G}|.$
    \item We have \[\Hilb^O(\Phi, \mathfrak{G}, \mathbf{x}) = \frac{1}{|\mathfrak{G}|} \sum_{\mathfrak{g} \in \mathfrak{G}} \Hilb(\Phi, \mathfrak{G}, \mathbf{x}; \mathfrak{g}).\]
\end{enumerate}
\end{proposition}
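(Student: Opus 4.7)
The plan is that all three parts follow from tracking definitions together with Burnside's lemma, with part (3) being an immediate consequence of (1) and (2).

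For part (1), I would unpack the definitions. Since $\mathfrak{G}$ preserves $\kappa$, the set $\kappa^{-1}(S) \subseteq \Phi$ is a $\mathfrak{G}$-subset, hence carries a permutation representation whose character is, by the standard fact, $\chi_{\kappa^{-1}(S), \mathfrak{G}}(\mathfrak{g}) = |\Fix_{\mathfrak{g}}(\kappa^{-1}(S))|$. On the other hand, extracting the coefficient of $M_{S, d+1}$ in the definition of $\Hilb(\Phi, \mathfrak{G}, \mathbf{x}; \mathfrak{g})$ gives exactly $|\{\sigma \in \Phi : \mathfrak{g}\sigma = \sigma,\ \kappa(\sigma) = S\}|$, which is the same quantity. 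Since this holds for every $\mathfrak{g}$, the two class functions coincide.

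For part (2), again $\mathfrak{G}$ preserves $\kappa$, so $\kappa$ descends to a well-defined function on $\Phi / \mathfrak{G}$, and the orbits of $\kappa^{-1}(S)$ are exactly the elements $C \in \Phi/\mathfrak{G}$ with $\kappa(C) = S$. Reading off the coefficient of $M_{S, d+1}$ in $\Hilb^O(\Phi, \mathfrak{G}, \mathbf{x})$ then yields $|\kappa^{-1}(S)/\mathfrak{G}|$.

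For part (3), I would combine (1), (2), and Burnside's lemma $|X/\mathfrak{G}| = \frac{1}{|\mathfrak{G}|}\sum_{\mathfrak{g} \in \mathfrak{G}} |\Fix_{\mathfrak{g}}(X)|$ applied to $X = \kappa^{-1}(S)$, then swap the order of summation:
\[
\Hilb^O(\Phi, \mathfrak{G}, \mathbf{x}) = \sum_{S \subseteq [d]} |\kappa^{-1}(S)/\mathfrak{G}|\, M_{S, d+1} = \frac{1}{|\mathfrak{G}|} \sum_{\mathfrak{g} \in \mathfrak{G}} \sum_{S \subseteq [d]} f_S(\Phi, \mathfrak{G})(\mathfrak{g})\, M_{S, d+1},
\]
and the inner sum is $\Hilb(\Phi, \mathfrak{G}, \mathbf{x}; \mathfrak{g})$ by definition.

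There is no real obstacle here; the whole statement is essentially the assertion that the flag quasisymmetric class function bookkeeps fixed-point counts indexed by $\kappa$-type, so Burnside converts the averaged trace into an orbit count. The only thing worth being careful about is verifying once, up front, that the $\mathfrak{G}$-invariance of $\kappa$ makes $\kappa^{-1}(S)$ a $\mathfrak{G}$-subset so that permutation character and orbit-set language both apply.
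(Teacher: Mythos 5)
Your proposal is correct and follows essentially the same route as the paper: both parts (1) and (2) are read off by extracting the coefficient of $M_{S,d+1}$ and identifying it with a fixed-point count (respectively an orbit count) for the $\mathfrak{G}$-set $\kappa^{-1}(S)$, and part (3) is obtained by applying Burnside's lemma to each $\kappa^{-1}(S)$ and interchanging the sums. No differences worth noting.
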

\begin{proof}
For the first part, let $S \subseteq [d]$. Given $\mathfrak{g} \in \mathfrak{G}$, we see that $[M_{S, d+1}]\Hilb(\Phi, \mathfrak{G}, \mathbf{x}; \mathfrak{g}) = |\{\sigma \in \Phi: \kappa(\sigma) = S, \mathfrak{g}\sigma = \sigma \}| = |\Fix_{\mathfrak{g}}(\kappa^{-1}(S))|$. The first result follows. 

For the second result, we see that $[M_{S, d+1}]\Hilb^O(\Phi, \mathfrak{G}, \mathbf{x}) = |\{C \in \Phi/\mathfrak{G}: \kappa(C) = S \}| = |\kappa^{-1}(S)/\mathfrak{G}|$. 

For the third result, we have 
\begin{align*}
    \frac{1}{|\mathfrak{G}|} \sum_{\mathfrak{g} \in \mathfrak{G}} \Hilb(\Phi, \mathfrak{G}, \mathbf{x}; \mathfrak{g}) & = \frac{1}{|\mathfrak{G}|} \sum_{\mathfrak{g} \in \mathfrak{G}} \sum_{S \subseteq [d]} |\Fix_{\mathfrak{g}}(\kappa^{-1}(S))| M_{S, d+1} \\
    & = \sum_{S \subseteq [d]} \frac{1}{|\mathfrak{G}|} \sum_{\mathfrak{g} \in \mathfrak{G}}  |\Fix_{\mathfrak{g}}(\kappa^{-1}(S))| M_{S, d+1} \\
    & = \sum_{S \subseteq [d]}  |\kappa^{-1}(S)/\mathfrak{G}| M_{S, d+1} \\
    & = \Hilb^O(\Phi, \mathfrak{G}, \mathbf{x})
\end{align*}
where the first equality is our first identity, the third equality comes from Burnside's Lemma, and the last equality comes from our second identity.
\end{proof}

\subsection{Group action on homology}

Let $\Phi$ be a balanced simplicial complex of dimension $d-1$, with presentation $(\Delta, \Gamma)$. Suppose that $\mathfrak{G}$ acts on $\Phi$. We discuss various group actions on chain groups and homology groups. Here we let $\widetilde{C}_i(\Phi) = \widetilde{C}_i(\Delta, \Gamma)$, the relative chain group of the pair $(\Delta, \Gamma)$. Similarly, we let $\widetilde{H}_i(\Phi) = \widetilde{H}_i(\Delta, \Gamma)$.

Let $S \subseteq [d]$
We write $\Delta|_S = \{\sigma \subseteq V(\Phi): \sigma \subseteq \tau \mbox{ for some } \tau \in \Phi|_S \}$, and let $\Gamma|_S = \Phi|_S \setminus \Delta|_S$. We see that $\mathfrak{G}$ acts on both $\Delta|_S$ and $\Gamma|_S$ as simplicial complexes.
We let $\widetilde{C}_i(\Phi|_S) = \widetilde{C}_i(\Delta|_S) / \widetilde{C}_i(\Gamma|_S)$ denote the $i$th reduced chain group, which is generated by faces of dimension $i$. We see that $\partial_i: \widetilde{C}_i(\Delta|_S) \rightarrow \widetilde{C}_{i-1}(\Delta|_S)$ and $\partial_i: \widetilde{C}_i(\Gamma|_S) \rightarrow \widetilde{C}_{i-1}(\Gamma|_S)$ are both $\mathfrak{G}$-invariant. Thus $\partial_i: \widetilde{C}_i(\Phi|_S) \rightarrow \widetilde{C}_{i-1}(\Phi|_S)$ is also $\mathfrak{G}$-invariant. Hence $\mathfrak{G}$ acts on $\widetilde{H}_i(\Phi|_S)$. We let $\chi_{\widetilde{H}_{i-1}(\Phi|_S), \mathfrak{G}}$ denote the resulting character.

\begin{theorem}
Let $\Phi$ be a  balanced relative simplicial complex, and let $\mathfrak{G}$ act on $\Phi$. Then 
\[\Hilb(\Phi, \mathfrak{G}, \mathbf{x}) = \sum_{S \subseteq [d]} \left( \sum_{i=0}^{|S|} (-1)^{|S|-i} \chi_{\widetilde{H}_{i-1}(\Phi|_S), \mathfrak{G}} \right) F_{S, d}. \]

\label{thm:eulerchar2}
\end{theorem}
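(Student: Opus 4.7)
The plan is to expand $\Hilb(\Phi, \mathfrak{G}, \mathbf{x})$ in the fundamental basis, recognize the coefficients as the equivariant flag $h$-vector $h_S(\Phi, \mathfrak{G})$, and then match them to an equivariant Euler characteristic of the reduced chain complex of $\Phi|_S$.

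First, apply the standard inversion $M_{T, d+1} = \sum_{S \supseteq T}(-1)^{|S\setminus T|} F_{S, d+1}$ to the defining expansion $\Hilb(\Phi, \mathfrak{G}, \mathbf{x}) = \sum_T f_T(\Phi, \mathfrak{G}) M_{T, d+1}$ to rewrite it as $\sum_{S \subseteq [d]} h_S(\Phi, \mathfrak{G}) F_{S, d+1}$, where $h_S(\Phi, \mathfrak{G}) = \sum_{T \subseteq S}(-1)^{|S|-|T|} f_T(\Phi, \mathfrak{G})$. By the preceding proposition, $f_T(\Phi, \mathfrak{G}) = \chi_{\kappa^{-1}(T), \mathfrak{G}}$, so it suffices to establish, for each $S \subseteq [d]$, the identity
\[\sum_{T \subseteq S}(-1)^{|S|-|T|}\chi_{\kappa^{-1}(T),\mathfrak{G}} \;=\; \sum_{i=0}^{|S|}(-1)^{|S|-i}\chi_{\widetilde{H}_{i-1}(\Phi|_S),\mathfrak{G}}.\]

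For the right-hand side, note that the $(i-1)$-dimensional reduced chain group $\widetilde{C}_{i-1}(\Phi|_S)$ is spanned by faces $\sigma \in \Phi$ with $\kappa(\sigma) \subseteq S$ and $|\sigma| = i$, and since $\kappa$ is $\mathfrak{G}$-invariant the action preserves color type. Hence there is a $\mathfrak{G}$-equivariant direct sum decomposition
\[\widetilde{C}_{i-1}(\Phi|_S) \;\cong\; \bigoplus_{T \subseteq S,\ |T|=i} \mathbb{C}^{\kappa^{-1}(T)},\]
so $\chi_{\widetilde{C}_{i-1}(\Phi|_S),\mathfrak{G}} = \sum_{T \subseteq S,\ |T|=i} \chi_{\kappa^{-1}(T),\mathfrak{G}}$. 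Grouping the left-hand side of the desired identity by the size of $T$ rewrites it as $\sum_{i=0}^{|S|}(-1)^{|S|-i}\chi_{\widetilde{C}_{i-1}(\Phi|_S),\mathfrak{G}}$.

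Finally, since the boundary maps on $\widetilde{C}_\bullet(\Phi|_S)$ are $\mathfrak{G}$-equivariant (as recalled before the theorem), the equivariant Euler–Poincar\'e identity, obtained by applying rank-nullity to each irreducible isotypic component of the chain complex, yields
\[\sum_{i=0}^{|S|}(-1)^{i}\chi_{\widetilde{C}_{i-1}(\Phi|_S),\mathfrak{G}} \;=\; \sum_{i=0}^{|S|}(-1)^{i}\chi_{\widetilde{H}_{i-1}(\Phi|_S),\mathfrak{G}}.\]
Multiplying by $(-1)^{|S|}$ completes the argument. I do not anticipate a serious obstacle; the bulk of the work is bookkeeping, and the only subtle point is making sure the $i=0$ summand (the $\widetilde{C}_{-1}$ term, present precisely when $\emptyset \in \Phi$) is correctly aligned with the $T = \emptyset$ contribution to the flag $f$-vector sum, so that the combinatorial Euler characteristic and the homological one agree term-by-term.
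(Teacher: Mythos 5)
Your proof is correct and takes essentially the same approach as the paper: invert from the monomial to the fundamental basis, decompose the chain groups of $\Phi|_S$ by color type, and pass from chain groups to homology groups via the Hopf trace formula (your ``equivariant Euler--Poincar\'e identity''). The only cosmetic difference is that the paper carries out the computation pointwise in $\mathfrak{g}$ by comparing traces, while you work directly at the level of characters; the two are equivalent.
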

\begin{proof}
Let $S \subseteq [d]$, and let $\mathfrak{g} \in \mathfrak{G}$. Then
\begin{align*}
[F_{S, d}] \Hilb(\Phi, \mathfrak{G}, \mathbf{x}; \mathfrak{g}) & = 
\sum_{T \subseteq S} (-1)^{|S \setminus T|} [M_{T,d}] \Hilb(\Phi, \mathfrak{G}, \mathbf{x}; \mathfrak{g}) \\
& = \sum_{T \subseteq S} (-1)^{|S\setminus T|} |\{\sigma \in \kappa^{-1}(T): \mathfrak{g}\sigma = \sigma \}| \\
& = \sum_{i=0}^{|S|} (-1)^{|S|-i} \sum_{T \subseteq S: | T| = i} |\{\sigma \in F_T(\Phi|_S): \mathfrak{g} \sigma = \sigma \}| \\
& = \sum_{i=0}^{|S|} (-1)^{|S| - i} |\{\sigma \in \Phi|_S: |\kappa(\sigma)| = i, \mathfrak{g} \sigma = \sigma \}| \\
& = (-1)^{|S|}\sum_{i=0}^{|S|} (-1)^i \tr_{\mathfrak{g}}(\widetilde{C}_{i-1}(\Phi|_S)) \\ 
& = (-1)^{|S|}\sum_{i=0}^{|S|} (-1)^i \tr_{\mathfrak{g}}(\widetilde{H}_{i-1}(\Phi|_S)) \\
& = \sum_{i=0}^{|S|} (-1)^{|S|-i} \chi_{\widetilde{H}_{i-1}(\Phi|_S), \mathfrak{G}}( \mathfrak{g}).
\end{align*}
The first equality comes from the definition of fundamental quasisymmetric functions. The second equality uses the definition of the flag quasisymmetric class function. The third equality comes from choosing to sum over subsets by their size, and recognizing that if $\sigma \in \kappa^{-1}(T)$ and $T \subseteq S$, then $\sigma \in F_T(\Phi|_S)$. The fourth equality comes from simplifying the inner summation. The fifth and last equalities come from the definition of the related group actions. The sixth equality is the Hopf trace formula.

\end{proof}
\section{Complexes Satisfying Serre's Condition}
\label{sec:serre}

Given a balanced relative simplicial complex $\Phi$ and $v \in V(\Phi)$, we define $\lk_{\Phi}(v) = (\lk_{\Delta}(v), \lk_{\Gamma}(v))$. Similarly, given $S \subseteq V(\Phi)$, we let $\Phi \setminus S = (\Delta \setminus S, \Gamma \setminus S).$

A relative complex $\Phi$ with presentation $(\Delta, \Gamma)$ satisfies \emph{Serre's condition} $(S_{\ell})$ if \[\widetilde{H}_{i-1}(\lk_{\Delta}(\sigma), \lk_{\Gamma}(\sigma)) = 0\] for all $\sigma \in \Delta$ with $i \leq \min(\dim \lk_{\Phi}(\sigma), \ell-1).$ We say that $\Phi$ is \emph{relatively Cohen-Macaulay} if it satisfies $(S_{\dim \Phi})$.

Our first goal in this section is to prove Theorem \ref{thm:restriction}.
In the case $\Phi$ is a balanced simplicial complex, this result was shown by Holmes and Lyle \cite{holmes-lyle}. Our proof is a generalization of their proof.

We prove several lemmas first.
\begin{lemma}
Let $\Phi$ be a relative simplicial complex with presentation $(\Delta, \Gamma)$ which satisfies $(S_{\ell})$, and let $\sigma \in \Delta$. Then $\lk_{\Phi}(\sigma)$ also satisfies $(S_{\ell}).$
\label{lem:link}
\end{lemma}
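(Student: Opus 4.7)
The plan is to unwind the definitions and show that the condition for $\lk_\Phi(\sigma)$ at an arbitrary face $\tau$ is literally the condition for $\Phi$ at the face $\sigma \cup \tau$. So I would start by fixing $\tau \in \lk_\Delta(\sigma)$ (so $\tau$ and $\sigma$ are disjoint and $\sigma \cup \tau \in \Delta$), and by setting $i \leq \min(\dim \lk_{\lk_\Phi(\sigma)}(\tau), \ell-1)$. My goal is then to prove that
\[
\widetilde{H}_{i-1}\bigl(\lk_{\lk_\Delta(\sigma)}(\tau),\ \lk_{\lk_\Gamma(\sigma)}(\tau)\bigr) = 0.
\]

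The key ingredient is the standard ``link of a link'' identity, which for both the ambient complex and the subcomplex gives
\[
\lk_{\lk_\Delta(\sigma)}(\tau) = \lk_\Delta(\sigma \cup \tau), \qquad \lk_{\lk_\Gamma(\sigma)}(\tau) = \lk_\Gamma(\sigma \cup \tau).
\]
From these it follows immediately that $\lk_{\lk_\Phi(\sigma)}(\tau) = \lk_\Phi(\sigma \cup \tau)$ as relative complexes, and in particular they have the same dimension. Thus the bound $i \leq \min(\dim \lk_{\lk_\Phi(\sigma)}(\tau), \ell-1)$ is exactly the bound $i \leq \min(\dim \lk_\Phi(\sigma \cup \tau), \ell-1)$, and applying Serre's condition $(S_\ell)$ for $\Phi$ to the face $\sigma \cup \tau \in \Delta$ directly yields the required vanishing of $\widetilde{H}_{i-1}$. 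Since $\tau$ was arbitrary, $\lk_\Phi(\sigma)$ satisfies $(S_\ell)$.

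There is no real obstacle here; the only thing to check carefully is that the ``link of a link'' identity behaves correctly for the relative pair $(\Delta,\Gamma)$, which it does because $\lk$ is defined face-wise and $\Gamma \subseteq \Delta$ is preserved by the operation. I would include one short sentence verifying that $\tau \in \lk_\Delta(\sigma)$ forces $\sigma \cup \tau \in \Delta$ so that the $(S_\ell)$ hypothesis is legally applicable, and then the lemma reduces to a single invocation of that hypothesis.
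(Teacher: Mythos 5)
Your proposal is correct and follows essentially the same route as the paper: both reduce the claim to the identity $\lk_{\lk_{\Phi}(\sigma)}(\tau) = \lk_{\Phi}(\sigma \cup \tau)$ and then invoke $(S_{\ell})$ for $\Phi$ at the face $\sigma \cup \tau$. Your extra remark that the identity holds componentwise for the pair $(\Delta,\Gamma)$ is a reasonable bit of added care that the paper leaves implicit.
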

\begin{proof}
 Let $\tau \in \lk_{\Phi}(\sigma)$. Then $\lk_{\lk_{\Phi}(\sigma)}(\tau) = \lk_{\Phi}(\sigma \cup \tau)$. Since $\Phi$ satisfies $(S_{\ell})$, we have $\widetilde{H}_{i-1}(\lk_{\Phi}(\sigma \cup \tau)) = 0$ for $i \leq \min(\dim(\lk_{\Phi}(\sigma \cup \tau)), \ell-1)$. Hence $\widetilde{H}_{i-1}(\lk_{\lk_{\Phi}(\sigma)}(\tau)) = 0$ for $i \leq \min(\dim(\lk_{\lk_{\Phi}(\sigma)}(\tau)), \ell - 1).$ Thus $\lk_{\Phi}(\sigma)$ satisfies $(S_{\ell}).$
\end{proof}

A subset $J \subseteq V(\Phi)$ is \emph{independent} if, for all $\sigma \in \Delta$, we have $|\sigma \cap J| \leq 1$. The set $J$ is \emph{excellent} if for every facet $\sigma \in \Delta$, we have $|\sigma \cap J| = 1$.
\begin{lemma}
Let $\Phi$ be a relative simplicial complex with presentation $(\Delta, \Gamma)$. Let $I$ be an independent set of $\Phi$ such that $\Delta \setminus I \neq \emptyset$. Suppose that $\Phi$ satisfies $(S_{\ell})$. Then $\widetilde{H}_{i-1}(\Phi \setminus I) = 0$ for $i \leq \min( \dim(\Phi \setminus I), \ell-1).$
\label{lem:independent}
\end{lemma}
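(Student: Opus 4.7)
The plan is to induct on $|I|$. In the base case $|I| = 0$, one has $\Phi \setminus I = \Phi$, and applying $(S_\ell)$ to $\sigma = \emptyset$ yields the claim since $\lk_\Phi(\emptyset) = \Phi$.

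For the inductive step, fix $v \in I$ and set $I' = I \setminus \{v\}$, $\Phi' = \Phi \setminus I'$, and $\Psi = \Phi \setminus I = \Phi' \setminus \{v\}$. If $v$ lies in no face of $\Phi'$ then $\Psi = \Phi'$ and the inductive hypothesis on $I'$ finishes the argument, so assume $v$ appears in some face of $\Phi'$. The key geometric observation is that independence of $I$ forces every face of $\Phi$ containing $v$ to be disjoint from $I'$, and therefore to already lie in $\Phi'$; hence $\lk_{\Phi'}(v) = \lk_\Phi(v)$.

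Next, I would split the faces of $\Phi'$ into those avoiding $v$ (which are exactly the faces of $\Psi$) and those containing $v$ (which correspond bijectively to faces of $\lk_\Phi(v)$ via $\tau \cup \{v\} \mapsto \tau$). This produces a short exact sequence of chain complexes
\[ 0 \to \widetilde{C}_*(\Psi) \to \widetilde{C}_*(\Phi') \to \widetilde{C}_{*-1}(\lk_\Phi(v)) \to 0, \]
whose associated long exact sequence in reduced homology contains the portion
\[ \widetilde{H}_i(\Phi') \to \widetilde{H}_{i-1}(\lk_\Phi(v)) \to \widetilde{H}_{i-1}(\Psi) \to \widetilde{H}_{i-1}(\Phi'). \]
Two vanishing results feed into this: the inductive hypothesis applied to $I'$ (note $|I'| < |I|$ and $\Delta \setminus I' \supseteq \Delta \setminus I \neq \emptyset$) yields $\widetilde{H}_{i-1}(\Phi') = 0$ for $i \leq \min(\dim \Phi', \ell-1)$, and $(S_\ell)$ applied to the face $\{v\}$ yields $\widetilde{H}_{i-1}(\lk_\Phi(v)) = 0$ for $i \leq \min(\dim \lk_\Phi(v), \ell-1)$. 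Since $\dim \Psi \leq \dim \Phi'$, the first range covers $i \leq \min(\dim\Psi, \ell-1)$, and inserting the second into the displayed sequence forces $\widetilde{H}_{i-1}(\Psi) = 0$ in the claimed range.

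I expect the main obstacle to be the dimension bookkeeping at the top of the range. The Serre input uses $\dim \lk_\Phi(v) = \dim \Phi - 1$, whereas the target is phrased in terms of $\dim \Psi$; in the delicate case $\dim \Psi = \dim \Phi$ with $\ell$ large, the direct Serre bound falls one short at the single degree $i = \dim \Phi$. At that degree, the inductive vanishing of $\widetilde{H}_{\dim \Phi - 1}(\Phi')$ combined with exactness identifies $\widetilde{H}_{\dim \Phi - 1}(\Psi)$ with the cokernel of the connecting map $\widetilde{H}_{\dim \Phi}(\Phi') \to \widetilde{H}_{\dim \Phi - 1}(\lk_\Phi(v))$, and showing this cokernel vanishes — equivalently, that every top-dimensional cycle of the link lifts to a cycle of $\Phi'$ by wedging with $v$ and correcting inside $\Psi$ — is the one substantive step that goes beyond a formal long-exact-sequence chase.
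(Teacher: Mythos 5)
Your approach is essentially the paper's. The paper phrases the deletion step as a Mayer--Vietoris sequence for $\Phi = \Phi_1 \cup (\Phi\setminus\{v\})$ with $\Phi_1$ the closed star of $v$ (contractible, so its homology drops out), which is the same long exact sequence as your short exact sequence of chain complexes. The organizational differences are mild but real: the paper runs an outer induction on $\ell$ to reduce to the single degree $i=\ell-1$, and for $|I|>1$ it decomposes $\Phi$ as $(\Phi\setminus I')\cup(\Phi\setminus\{v\})$, which forces it to track the surjectivity of the map $i_v^{\ast}\colon \widetilde{H}_{\ell-1}(\Phi\setminus\{v\})\to\widetilde{H}_{\ell-1}(\Phi)$ from the base case. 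Your version -- peel $v$ off of $\Phi'=\Phi\setminus I'$ directly, using independence to identify $\lk_{\Phi'}(v)=\lk_{\Phi}(v)$ -- sandwiches $\widetilde{H}_{i-1}(\Phi\setminus I)$ between two vanishing terms and needs no surjectivity bookkeeping; that identification of links is correct and is the same use of independence the paper makes implicitly.

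On the step you leave open: the paper does not perform the cycle-lifting/cokernel argument you anticipate. It handles the link term by applying $(S_\ell)$ directly to the face $\{v\}$, observing that $\ell-1\le\dim(\Phi\setminus\{v\})\le\dim\Phi$ gives $\ell-2\le\dim\lk_{\Phi}(v)$ and then asserting $\widetilde{H}_{\ell-2}(\lk_{\Phi}(v))=0$. Whenever $\dim\lk_{\Phi}(v)\ge\ell-1$ (for instance whenever $\ell\le\dim\Phi$ and $\Phi$ is pure, as in the balanced setting of the applications), this Serre application covers every degree you need and your argument closes with no additional input. The only uncovered degree is exactly the one you isolate, $i-1=\dim\lk_{\Phi}(v)=\ell-2$, i.e.\ the top homology of the link, which arises only when $\ell-1=\dim(\Phi\setminus I)=\dim\Phi$; there the paper's own dimension count ($\ell-2\le\dim\lk_{\Phi}(v)$, where the stated form of $(S_\ell)$ requires $\ell-1\le\dim\lk_{\Phi}(v)$ to kill that degree) is precisely as tight as you fear, so your identification of this as the one substantive point is accurate. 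To match the paper, replace your proposed lifting argument with the direct Serre application to $\{v\}$ and record explicitly that the boundary degree only occurs in the case $\ell-1=\dim\Phi$.
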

\begin{proof}
We prove the result by induction on $\ell$, where the case $\ell = 1$ is trivial. So suppose $\ell \geq 2$. Since $\Phi$ satisfies $(S_{\ell})$, it also satisfies $(S_{\ell - 1})$, and thus by the induction hypothesis, we have $\widetilde{H}_{i-1}(\Phi \setminus I) = 0$ for $i \leq \min(\dim(\Phi \setminus I), \ell-2)$. Thus we only need to show that 
\begin{equation} \widetilde{H}_{\ell-2}(\Phi \setminus I) = 0 \mbox{ when }\ell -1 \leq \dim(\Phi \setminus I).\label{eq:homology} \end{equation} 

We prove Equation \eqref{eq:homology} by induction on $|I|.$
Suppose that $I = \{x\}$. Let $\Delta_1 = \{\sigma \in \Delta: \sigma \cup \{x\} \in \Delta \}$, and $\Gamma_1 = \{\sigma \in \Gamma: \sigma \cup \{x\} \in \Gamma \}$.
 Then $\Delta = \Delta_1 \cup \Delta \setminus \{x\}$, and $\Gamma = \Gamma_1 \cup \Gamma \setminus \{x\}$. Also, $\lk_{\Delta}(x) = \Delta_1 \cap \Delta \setminus \{x\}$ and $\lk_{\Gamma}(x) = \Gamma_1 \cap \Gamma \setminus \{x\}$. We let $\Phi_1 = (\Delta_1, \Gamma_1).$
 
 Consider the Mayer-Vietoris long exact sequence 
 \[ \widetilde{H}_{\ell-1}(\Phi_1) \oplus \widetilde{H}_{\ell-1}(\Phi \setminus \{x\}) \xrightarrow{i^{\ast}_{x}} \widetilde{H}_{\ell-1}(\Phi) \rightarrow \widetilde{H}_{\ell-1}(\lk_{\Phi}(x)) \rightarrow \widetilde{H}_{\ell-2}(\Phi_1) \oplus \widetilde{H}_{\ell-2}(\Phi \setminus \{x\}) \rightarrow \widetilde{H}_{\ell-2}(\Phi) \]

We claim that $\widetilde{H}_i(\Phi_1) = 0$ for all $i$. We see that $x$ is a cone point of $\Delta_1$. If $x \in \Gamma$, then $x$ is a cone point of $\Gamma_1$. Otherwise, $\Gamma_1 = \emptyset$. Thus, $\widetilde{H}_i(\Delta_1) = \widetilde{H}_i(\Gamma_1) = 0$ for all $i$. Applying the long exact sequence for a relative pair, it follows that $\widetilde{H}_i(\Phi_1) = 0$ for all $i$.

Since $\ell -1 \leq \dim(\Phi \setminus \{x\}) \leq \dim(\Phi)$, we see that $\ell - 2 \leq \dim(\lk_{\Phi}(x))$. Since $\Phi$ satisfies $(S_{\ell})$, it follows that $\widetilde{H}_{\ell-2}(\lk_{\Phi}(x)) = 0  = \widetilde{H}_{\ell-2}(\Phi).$ By exactness, we see that $\widetilde{H}_{\ell-2}(\Phi \setminus \{x\}) = 0$. Also, if we let $i_x$ denote the inclusion $\Phi\setminus \{x\}$ into $\Phi$, then $i^{\ast}_x$ is the induced map on homology. By exactness, $i^{\ast}_x$ is surjective.

Now suppose that $|I| > 2$, and let $x \in I$. Define $I' = I \setminus \{x\}$. Then $\Delta = \Delta \setminus I' \cup \Delta \setminus \{x\}$, and $\Delta \setminus I = \Delta \setminus I' \cap \Delta \setminus \{x\}$. Similarly, $\Gamma = \Gamma \setminus I' \cup \Gamma \setminus \{x\}$, and $\Gamma \setminus I = \Gamma \setminus I' \cap \Gamma \setminus \{x\}$.
 
 Consider the Mayer-Vietoris long exact sequence 
 \[ \widetilde{H}_{\ell-1}(\Phi \setminus I') \oplus \widetilde{H}_{\ell-1}(\Phi \setminus \{x\}) \xrightarrow{j^{\ast}_{I'} - i^{\ast}_{x}} \widetilde{H}_{\ell-1}(\Phi) \rightarrow \widetilde{H}_{\ell-2}(\Phi \setminus I) \rightarrow \widetilde{H}_{\ell-2}(\Phi \setminus I') \oplus \widetilde{H}_{\ell-2}(\Phi \setminus \{x\}). \]
 Here $j^{\ast}_{I'}$ is induced by the inclusion map $j_{I'}: \Phi \setminus I' \to \Phi$.
 By induction, we see that $\widetilde{H}_{\ell-2}(\Phi \setminus I') = 0 = \widetilde{H}_{\ell-2}(\Phi \setminus \{x\})$. Also, $j_{I'}$ is the inclusion map of $\Phi \setminus I'$ into $\Phi$. Since $i_x^{\ast}$ is surjective, it follows from exactness that $\widetilde{H}_{\ell-2}(\Phi \setminus I) = 0$.

\end{proof}

\begin{lemma}
Let $\Phi$ be a relative simplicial complex with presentation $(\Delta, \Gamma)$. Let $J$ be an excellent set of $\Phi$. Suppose that $\Phi$ satisfies $(S_{\ell}).$ Then $\Phi \setminus J$ satisfies $(S_{\ell}).$
\label{lem:excellent}
\end{lemma}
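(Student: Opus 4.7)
The plan is to reduce the excellent case directly to the independent case by passing to links. Fix $\sigma \in \Delta \setminus J$; to verify $(S_{\ell})$ for $\Phi \setminus J$, I must show that $\widetilde{H}_{i-1}(\lk_{\Phi \setminus J}(\sigma)) = 0$ for all $i \leq \min(\dim \lk_{\Phi \setminus J}(\sigma), \ell - 1)$. The first move is to unpack the link: since $\sigma \cap J = \emptyset$, a face $\tau$ lies in $\lk_{\Delta \setminus J}(\sigma)$ precisely when $\tau \cup \sigma \in \Delta$, $\tau \cap \sigma = \emptyset$, and $\tau \cap J = \emptyset$; the same holds on the $\Gamma$-side. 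Therefore
\[ \lk_{\Phi \setminus J}(\sigma) = \lk_{\Phi}(\sigma) \setminus I, \]
where $I := J \cap V(\lk_{\Phi}(\sigma))$.

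Next I would verify that $I$ is an independent set of $\lk_{\Phi}(\sigma)$. For any $\tau \in \lk_{\Delta}(\sigma)$ one has $\sigma \cup \tau \in \Delta$, so $|(\sigma \cup \tau) \cap J| \leq 1$ by independence of $J$ in $\Phi$; combined with $\sigma \cap J = \emptyset$ this gives $|\tau \cap I| \leq 1$, as required. I would also note that $\emptyset \in \lk_{\Delta}(\sigma) \setminus I$, so the hypothesis $\Delta \setminus I \neq \emptyset$ of Lemma \ref{lem:independent} is automatic.

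With these in place the argument finishes quickly: Lemma \ref{lem:link} tells us that $\lk_{\Phi}(\sigma)$ inherits Serre's condition $(S_{\ell})$ from $\Phi$, so Lemma \ref{lem:independent} applied to $\lk_{\Phi}(\sigma)$ and the independent set $I$ yields
\[ \widetilde{H}_{i-1}(\lk_{\Phi}(\sigma) \setminus I) = 0 \quad \text{for } i \leq \min(\dim(\lk_{\Phi}(\sigma) \setminus I),\, \ell - 1). \]
Rewriting $\lk_{\Phi}(\sigma) \setminus I = \lk_{\Phi \setminus J}(\sigma)$ gives exactly the vanishing needed, and letting $\sigma$ range over $\Delta \setminus J$ completes the proof.

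I expect no serious obstacle: the content of the lemma is really just that excellence of $J$ plays no role beyond its forcing $\sigma \cap J = \emptyset$ for faces of $\Phi \setminus J$, which together with independence of $J$ is what makes the link computation clean. The only bookkeeping care is in the first step, to check that removing $J$ commutes with taking links (both for the $\Delta$- and $\Gamma$-parts), and to confirm that the non-emptiness hypothesis of Lemma \ref{lem:independent} survives the passage to links.
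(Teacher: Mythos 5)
Your proof is correct, and it is organized differently from the paper's. The paper proves the lemma by induction on $\dim\Phi$: for $\sigma\neq\emptyset$ it shows that $J'=J\cap V(\lk_\Phi(\sigma))$ is again \emph{excellent} in $\lk_\Phi(\sigma)$ and invokes the induction hypothesis on the lower-dimensional link, reserving Lemma \ref{lem:independent} for the single case $\sigma=\emptyset$. You instead apply Lemma \ref{lem:independent} uniformly to every link: since $(S_\ell)$ for $\Phi\setminus J$ is by definition the conjunction over all $\sigma\in\Delta\setminus J$ of the vanishing statements, and since $\lk_{\Phi\setminus J}(\sigma)=\lk_\Phi(\sigma)\setminus I$ with $I$ independent and $\lk_\Phi(\sigma)$ satisfying $(S_\ell)$ by Lemma \ref{lem:link}, each required vanishing follows in one step. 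This dispenses with the induction entirely and with the verification that $J'$ is excellent (only its independence is used), so your argument in fact establishes the conclusion for any independent set $J$, not just an excellent one; the excellence hypothesis matters elsewhere in the paper (e.g.\ to keep $\Phi|_S$ pure of the correct dimension in Theorem \ref{thm:restriction}), but not for this homological statement. Your bookkeeping is also in order: the identity $\lk_{\Phi\setminus J}(\sigma)=\lk_\Phi(\sigma)\setminus I$ uses $\sigma\cap J=\emptyset$, which holds for every face of $\Delta\setminus J$, and the non-voidness hypothesis of Lemma \ref{lem:independent} is met because $\emptyset\in\lk_\Delta(\sigma)\setminus I$.
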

\begin{proof}
We prove the result by induction on $\dim \Phi$. Let $\sigma \in \Delta \setminus J$ with $|\sigma| > 0$. By Lemma \ref{lem:link}, we know that $\lk_{\Phi}(\sigma)$ satisfies $(S_{\ell})$. Define $J' = V(\lk_{\Phi}(\sigma)) \cap J$. We claim that $J'$ is excellent. If $J'$ is not independent as a subset of $\lk_{\Phi}(\sigma)$, then $J'$ is not independent in $\Phi$, and thus $J$ is not independent. Therefore, $J'$ is an independent set. Let $\tau$ be a facet of $\lk_{\Phi}(\sigma)$. Then $\tau \cup \sigma$ is a facet of $\Phi$. Hence $(\tau \cup \sigma) \cap J = \{x\}$ for some unique $x \in J$. Since $\sigma \in \Delta \setminus J$, we see that $x \in \tau$, and thus $|\tau \cap J'| \leq 1$. If $|\tau \cap J'| \geq 2$, then $|(\sigma \cup \tau) \cap J| \geq 2$, a contradiction. Therefore, $J'$ is excellent.

Since $\dim(\lk_{\Phi}(\sigma)) < \dim(\Phi)$, by induction $\lk_{\Phi}(\sigma) \setminus J'$ satisfies $(S_{\ell}).$ 
Observe that $\lk_{\Phi \setminus J}(\sigma) = \lk_{\Phi}(\sigma) \setminus J'$. Thus $\widetilde{H}_{i-1}(\lk_{\Phi \setminus J}(\sigma)) = 0$ for $i \leq \min(\dim(\lk_{\Phi \setminus J}(\sigma)), \ell-1).$

It remains to show that $\widetilde{H}_{i-1}(\Phi \setminus J) = 0$ for $i \leq \min(\dim(\Phi \setminus J), \ell-1)$, which follows from Lemma \ref{lem:independent}.
\end{proof}
\begin{proof}[Proof of Theorem \ref{thm:restriction}]
Let $\Phi$ be a balanced relative simplicial complex of dimension $d-1$ which satisfies $(S_{\ell})$. Let $S \subseteq [d]$. We prove the result by induction on $|[d] \setminus S|.$ The base case where $S = [d]$ is true by assumption.

Let $x \in S$, and let $S' = S \setminus \{x\}$, and let $J = \kappa^{-1}(x)$. By induction, $\Phi|_{S'}$ satisfies $(S_{\ell})$. We see that $J$ is excellent in $\Phi|_{S'}$, and that $\Phi|_S = \Phi|_{S'} \setminus J$. Thus the result follows from Lemma \ref{lem:excellent}.
\end{proof}

We state a few properties about $(S_{\ell})$ that we need.
\begin{proposition}
Let $\Phi$ be a balanced simplicial complex of dimension $d-1$ with presentation $(\Delta, \Gamma)$. Suppose that $\Delta$ satisfies $(S_{\ell})$ and $\Gamma$ satisfies $(S_{\ell - 1})$. If $\Gamma$ has dimension $d-2$, or is void, then $\Phi$ satisfies $(S_{\ell})$. 
\label{prop:relative}
\end{proposition}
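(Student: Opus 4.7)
My plan is to derive the vanishing for $\Phi$ from the Serre conditions on $\Delta$ and $\Gamma$ via the long exact sequence of the pair $(\mathrm{lk}_{\Delta}(\sigma), \mathrm{lk}_{\Gamma}(\sigma))$, together with the dimensional comparison forced by the hypothesis on $\dim \Gamma$. Fix $\sigma \in \Delta$ and $1 \leq i \leq \min(\dim \mathrm{lk}_{\Phi}(\sigma), \ell - 1)$; the goal is to show $\widetilde{H}_{i-1}(\mathrm{lk}_{\Phi}(\sigma)) = 0$.

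First I would dispose of the degenerate case. If $\Gamma$ is void or $\sigma \notin \Gamma$, then by downward closure of $\Gamma$ no superface of $\sigma$ lies in $\Gamma$, so $\mathrm{lk}_{\Gamma}(\sigma)$ is itself void. The relative chain complex collapses to $\widetilde{C}_\ast(\mathrm{lk}_{\Delta}(\sigma))$, so $\widetilde{H}_{i-1}(\mathrm{lk}_{\Phi}(\sigma)) = \widetilde{H}_{i-1}(\mathrm{lk}_{\Delta}(\sigma))$, and this vanishes by $(S_\ell)$ on $\Delta$ applied to $\sigma$, using $\dim \mathrm{lk}_{\Delta}(\sigma) \geq \dim \mathrm{lk}_{\Phi}(\sigma)$.

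For the main case $\sigma \in \Gamma$, the long exact sequence of the pair reads
\[
\widetilde{H}_{i-1}(\mathrm{lk}_{\Delta}(\sigma)) \to \widetilde{H}_{i-1}(\mathrm{lk}_{\Phi}(\sigma)) \to \widetilde{H}_{i-2}(\mathrm{lk}_{\Gamma}(\sigma)) \to \widetilde{H}_{i-2}(\mathrm{lk}_{\Delta}(\sigma)),
\]
and I would show both outer groups vanish. The $\Delta$-terms vanish by $(S_\ell)$ on $\Delta$ exactly as above. The subtle step is the middle-right group: to apply $(S_{\ell - 1})$ on $\Gamma$ I need $i - 1 \leq \min(\dim \mathrm{lk}_{\Gamma}(\sigma), \ell - 2)$. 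The bound $i - 1 \leq \ell - 2$ is immediate. For $i - 1 \leq \dim \mathrm{lk}_{\Gamma}(\sigma)$, I invoke the hypothesis that $\Gamma$ has dimension $d-2$ (read as: every facet of $\Gamma$ has size $d-1$, which is exactly what is needed for the argument to succeed, as a non-pure $\Gamma$ admits counterexamples). Then $\sigma \in \Gamma$ extends to some facet $\mu \in \Gamma$ of size $d-1$, giving $\dim \mathrm{lk}_{\Gamma}(\sigma) \geq d - 2 - |\sigma|$. On the other hand, non-emptiness of $\mathrm{lk}_{\Phi}(\sigma)$ means $\sigma$ lies in a facet of $\Phi$ of size $d$, so $\dim \mathrm{lk}_{\Phi}(\sigma) = d - 1 - |\sigma|$. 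Combining, $i - 1 \leq \dim \mathrm{lk}_{\Phi}(\sigma) - 1 \leq \dim \mathrm{lk}_{\Gamma}(\sigma)$, and $(S_{\ell-1})$ on $\Gamma$ gives $\widetilde{H}_{i-2}(\mathrm{lk}_{\Gamma}(\sigma)) = 0$. Exactness then forces $\widetilde{H}_{i-1}(\mathrm{lk}_{\Phi}(\sigma)) = 0$, completing the argument.

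The main obstacle is the dimensional accounting in the case $\sigma \in \Gamma$: the entire argument pivots on the inequality $\dim \mathrm{lk}_{\Gamma}(\sigma) \geq \dim \mathrm{lk}_{\Phi}(\sigma) - 1$, and this is precisely what the assumption on $\dim \Gamma$ (together with balancedness of $\Phi$) is designed to provide. Without it, one could construct a $\sigma \in \Gamma$ whose link in $\Gamma$ is strictly smaller-dimensional and whose top reduced homology obstructs the lifting; the purity hypothesis aligns the ranges of vanishing supplied by $\Delta$ and by $\Gamma$ so that the long exact sequence telescopes cleanly.
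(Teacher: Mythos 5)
Your proof is correct and follows essentially the same route as the paper's: the long exact sequence of the pair of links, with the two outer terms annihilated by $(S_{\ell})$ on $\Delta$ and $(S_{\ell-1})$ on $\Gamma$, respectively. Your dimension count justifying $\dim \lk_{\Gamma}(\sigma) \geq \dim \lk_{\Phi}(\sigma) - 1$ (reading the hypothesis as purity of $\Gamma$) is in fact more careful than the paper's, which simply asserts $\dim(\lk_{\Gamma}(\sigma)) = \dim(\lk_{\Delta}(\sigma)) - 1$ without justification.
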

\begin{proof}
Let $\sigma \in \Delta$. Consider the long exact sequence \[ \widetilde{H}_{i-1}(\lk_{\Delta}(\sigma)) \rightarrow \widetilde{H}_{i-1}(\lk_{\Phi}(\sigma)) \rightarrow \widetilde{H}_{i-2}(\lk_{\Gamma}(\sigma)) \]
We see that, when $i \leq \min(\dim(\lk_{\Phi}(\sigma)), \ell-1)$, then $\widetilde{H}_i(\lk_{\Delta}(\sigma)) = 0$. If $\lk_{\Gamma}(\Phi) = \emptyset$, then $\widetilde{H}_{i-1}(\lk_{\Gamma}(\sigma)) = 0$. Otherwise, $\dim(\lk_{\Gamma}(\sigma)) = \dim(\lk_{\Delta}(\sigma))-1$. Then since $\Gamma$ satisfies $(S_{\ell-1})$, we have $\widetilde{H}_{i-2}(\lk_{\Gamma}(\sigma)) = 0$. By exactness, $\widetilde{H}_{i-1}(\lk_{\Phi}(\sigma)) = 0.$
\end{proof}

\begin{proposition}
Let $\Delta_1$ and $\Delta_2$ be two $d$-dimensional simplicial complexes such that $\Delta_1 \cap \Delta_2$ is $(d-1)$-dimensional. If $\Delta_1$ and $\Delta_2$ both satisfy $(S_{\ell})$, and $\Delta_1 \cap \Delta_2$ satisfies $(S_{\ell - 1})$, then $\Delta_1 \cup \Delta_2$ satisfies $(S_{\ell})$.
\label{prop:constructible}
\end{proposition}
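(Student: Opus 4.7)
The plan is to verify, for each $\sigma\in\Delta_1\cup\Delta_2$, the vanishing $\widetilde H_{i-1}(\lk_{\Delta_1\cup\Delta_2}(\sigma))=0$ required by $(S_\ell)$, by a case split on whether $\sigma$ lies in the intersection.

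If $\sigma$ belongs to only one of the complexes, say $\sigma\in\Delta_1\setminus\Delta_2$, then every face of $\Delta_1\cup\Delta_2$ containing $\sigma$ must lie in $\Delta_1$, and hence $\lk_{\Delta_1\cup\Delta_2}(\sigma)=\lk_{\Delta_1}(\sigma)$. The desired vanishing then follows immediately from $(S_\ell)$ applied to $\Delta_1$. The symmetric case $\sigma\in\Delta_2\setminus\Delta_1$ is handled identically.

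The main case is $\sigma\in\Delta_1\cap\Delta_2$. Here one computes $\lk_{\Delta_1\cup\Delta_2}(\sigma)=\lk_{\Delta_1}(\sigma)\cup\lk_{\Delta_2}(\sigma)$ with $\lk_{\Delta_1}(\sigma)\cap\lk_{\Delta_2}(\sigma)=\lk_{\Delta_1\cap\Delta_2}(\sigma)$, so the Mayer--Vietoris long exact sequence yields
\[
\widetilde H_{i-1}(\lk_{\Delta_1}(\sigma))\oplus\widetilde H_{i-1}(\lk_{\Delta_2}(\sigma))\to\widetilde H_{i-1}(\lk_{\Delta_1\cup\Delta_2}(\sigma))\to\widetilde H_{i-2}(\lk_{\Delta_1\cap\Delta_2}(\sigma)).
\]
Using the (implicit) pure $d$-dimensionality of $\Delta_1,\Delta_2$ together with the pure $(d-1)$-dimensionality of $\Delta_1\cap\Delta_2$, one obtains $\dim\lk_{\Delta_j}(\sigma)=\dim\lk_{\Delta_1\cup\Delta_2}(\sigma)$ for $j=1,2$ and $\dim\lk_{\Delta_1\cap\Delta_2}(\sigma)=\dim\lk_{\Delta_1\cup\Delta_2}(\sigma)-1$. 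For $i\le\min(\dim\lk_{\Delta_1\cup\Delta_2}(\sigma),\ell-1)$, the outer-left term then vanishes by $(S_\ell)$ on each $\Delta_j$, and the outer-right term vanishes by $(S_{\ell-1})$ on $\Delta_1\cap\Delta_2$ since $i-1\le\min(\dim\lk_{\Delta_1\cap\Delta_2}(\sigma),\ell-2)$. Exactness forces $\widetilde H_{i-1}(\lk_{\Delta_1\cup\Delta_2}(\sigma))=0$, as required.

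The main obstacle is the dimension bookkeeping and the edge cases: one must verify that the indices genuinely fall in the ranges where $(S_\ell)$ and $(S_{\ell-1})$ supply vanishing, and address the possibility that $\lk_{\Delta_1\cap\Delta_2}(\sigma)$ could degenerate (empty or reduced to $\{\emptyset\}$), which would place $\widetilde H_{-1}$ in the rightmost slot of the sequence. The hypothesis that $\Delta_1\cap\Delta_2$ is exactly $(d-1)$-dimensional is precisely what guarantees the correct codimension-one relationship between the links and allows the Mayer--Vietoris argument to close cleanly.
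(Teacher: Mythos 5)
Your proof is correct and follows essentially the same route as the paper: the identical case split on whether $\sigma$ lies in $\Delta_1\cap\Delta_2$, and the same Mayer--Vietoris sequence on links in the main case. Your extra dimension bookkeeping and attention to degenerate links is more careful than the paper's (quite terse) argument, but it is the same proof.
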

\begin{proof}
Let $\sigma \in \Delta_1 \cup \Delta_2$, and $i \leq \min( d-|\sigma|, \ell-1)$. If $\sigma \in \Delta_1 \setminus \Delta_2$, then $\lk_{\Delta_1 \cup \Delta_2}(\sigma) = \lk_{\Delta_1}(\sigma)$, and $\widetilde{H}_{i-1}(\lk_{\Delta_1 \cup \Delta_2}(\sigma)) = 0$, since $\Delta_1$ satisfies $(S_{\ell})$. A similar result holds if $\sigma \in \Delta_2 \setminus \Delta_1$.

So we may assume $\sigma \in \Delta_1 \cap \Delta_2.$ Let $\Gamma_i = \lk_{\Delta_i}(\sigma)$, and let $\Gamma = \lk_{\Delta_1 \cup \Delta_2}(\sigma)$. Then $\Gamma = \Gamma_1 \cup \Gamma_2$, and $\Gamma_1 \cap \Gamma_2 = \lk_{\Delta_1 \cap \Delta_2}(\sigma)$. 
We consider the Mayer-Vietoris sequence
\[\widetilde{H}_{i-1}(\Gamma_1) \oplus \widetilde{H}_{i-1}(\Gamma_2) \rightarrow \widetilde{H}_{i-1}(\Gamma) \rightarrow \widetilde{H}_{i-2}(\Gamma_1 \cap \Gamma_2). \]
If $0 \leq i \leq \min(d-|\sigma|, \ell-1)$, then since $\Delta_j$ satisfies $(S_{\ell})$, we see that $\widetilde{H}_{i-1}(\Gamma_j) = 0$. Similarly, since $\Delta_1 \cap \Delta_2$ satisfies $(S_{\ell - 1})$, we see that $\widetilde{H}_{i-2}(\Gamma_1 \cap \Gamma_2) = 0$. Thus, by exactness, $\widetilde{H}_{i-1}(\Gamma) = 0$, and $\Delta_1 \cup \Delta_2$ satisfies $(S_{\ell}).$
\end{proof}
\subsection{Inequalities}
Now we discuss inequalities that hold for various equivariant flag $f$- and $h$-vectors for balanced relative simplicial complexes which satisfy $(S_{\ell})$. First, we introduce a generalization $h_{S,T}(\Phi, \mathfrak{G})$ of the flag $h$-vector and prove some identities about this generalization. A lot of inequalities come from expressing some summation of flag $h$-vectors in terms of $h_{S,T}(\Phi, \mathfrak{G}).$
\begin{theorem}

Let $\Phi$ be a balanced relative simplicial complex of dimension $d-1$. Let $\mathfrak{G}$ act on $\Phi$.
Given $S \subseteq T \subseteq [d]$, define \[h_{S,T}(\Phi, \mathfrak{G}) = \sum_{R: S \subseteq R \subseteq T}  h_R(\Phi|_T, \mathfrak{G}).\] 

Let $\mathcal{T}_{T \setminus S}(\Phi)$ be a transversal for $\mathfrak{G}$ acting on $F_{T \setminus S}(\Phi).$
Then \begin{align*} h_{S,T}(\Phi, \mathfrak{G}) &= \sum_{Q: T \setminus S \subseteq Q \subseteq T} (-1)^{|T \setminus Q|} f_Q(\Phi|_T, \mathfrak{G}) \\ &= \sum_{\tau \in \mathcal{T}_{T \setminus S}(\Phi)} h_{S} ((\lk_{\Phi}(\tau))|_{S}, \mathfrak{G}_{\tau}) \uparrow_{\mathfrak{G}_{\tau}}^{\mathfrak{G}}.\end{align*}

\label{lem:fundamental}
\end{theorem}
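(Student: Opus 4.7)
The plan is to address the two claimed equalities in order: the first by a direct inclusion-exclusion manipulation of the definition of the flag $h$-vector, and the second by combining that identity with a $\mathfrak{G}$-equivariant fiber decomposition of the faces that appear in $f_Q(\Phi|_T, \mathfrak{G})$.

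For the first equality, I would expand using the standard inversion $h_R(\Phi|_T, \mathfrak{G}) = \sum_{U \subseteq R} (-1)^{|R \setminus U|} f_U(\Phi|_T, \mathfrak{G})$, substitute into the definition of $h_{S,T}(\Phi, \mathfrak{G})$, and swap the order of summation to obtain
\[ h_{S,T}(\Phi, \mathfrak{G}) = \sum_{U \subseteq T} f_U(\Phi|_T, \mathfrak{G}) \sum_{R: S \cup U \subseteq R \subseteq T} (-1)^{|R \setminus U|}. \]
Parametrizing $R = (S \cup U) \cup B$ with $B \subseteq T \setminus (S \cup U)$, the inner sum factors as $(-1)^{|S \setminus U|} \sum_{B} (-1)^{|B|}$, which vanishes unless $T \setminus (S \cup U) = \emptyset$, equivalently $T \setminus S \subseteq U$. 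When $T \setminus S \subseteq U \subseteq T$ one has $S \setminus U = T \setminus U$, so the surviving sign is $(-1)^{|T \setminus U|}$, giving the first equality.

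For the second equality, the key observation is that a face $\sigma \in \Phi|_T$ with $\kappa(\sigma) = Q \supseteq T \setminus S$ splits uniquely as $\sigma = \tau \cup \sigma'$, where $\tau = \sigma \cap \kappa^{-1}(T \setminus S)$ has color set exactly $T \setminus S$ and $\sigma' = \sigma \cap \kappa^{-1}(S) \in \lk_{\Phi}(\tau)$ has color set $Q \setminus (T \setminus S) \subseteq S$. Since $\kappa$ is $\mathfrak{G}$-invariant this splitting is $\mathfrak{G}$-equivariant: the projection $\sigma \mapsto \tau$ is a $\mathfrak{G}$-map into $F_{T \setminus S}(\Phi)$, with fiber over $\tau$ the $\mathfrak{G}_\tau$-set $A_{\tau, Q} = \{\sigma' \in \lk_{\Phi}(\tau) : \kappa(\sigma') = Q \setminus (T \setminus S)\}$. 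Grouping by $\mathfrak{G}$-orbits of the base, the domain becomes the disjoint union $\bigsqcup_{\tau \in \mathcal{T}_{T \setminus S}(\Phi)} \mathfrak{G} \times_{\mathfrak{G}_\tau} A_{\tau, Q}$ of induced $\mathfrak{G}$-sets, and taking permutation characters yields
\[ f_Q(\Phi|_T, \mathfrak{G}) = \sum_{\tau \in \mathcal{T}_{T \setminus S}(\Phi)} f_{Q \setminus (T \setminus S)}(\lk_{\Phi}(\tau), \mathfrak{G}_\tau) \uparrow_{\mathfrak{G}_\tau}^{\mathfrak{G}}. \]
Substituting into the first equality, reindexing via $P = Q \setminus (T \setminus S)$ with $P \subseteq S$ (so that $|T \setminus Q| = |S \setminus P|$), and pulling the outer sum over $\tau$ past the induction, the inner alternating sum becomes $h_S((\lk_{\Phi}(\tau))|_S, \mathfrak{G}_\tau)$, since $f_P(\lk_{\Phi}(\tau), \mathfrak{G}_\tau) = f_P((\lk_{\Phi}(\tau))|_S, \mathfrak{G}_\tau)$ whenever $P \subseteq S$.

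The only step that is more than formal is the $\mathfrak{G}$-equivariant fiber-bundle argument in the second part: one must verify that the color-based splitting of a face is compatible with $\mathfrak{G}$, and that restricting to a single $\mathfrak{G}$-orbit of the base identifies the corresponding piece of the domain with an induced $\mathfrak{G}$-set so that the sum-of-induced-characters formula applies. Once this is in place, every remaining manipulation is routine Boolean-interval sign cancellation and a reindexing of sums.
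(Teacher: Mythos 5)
Your proposal is correct and follows essentially the same route as the paper: the first equality is the same Boolean M\"obius computation (you invert $h\to f$ where the paper expands $f\to h$, which is the identical cancellation), and the second equality uses the same decomposition of a face $\sigma$ with $T\setminus S\subseteq\kappa(\sigma)$ into its $(T\setminus S)$-colored part $\tau$ and a face of $(\lk_\Phi(\tau))|_S$, summed over a transversal. The only difference is presentational: you obtain the induced characters structurally from the identification of each orbit's preimage with $\mathfrak{G}\times_{\mathfrak{G}_\tau}A_{\tau,Q}$, whereas the paper verifies the same identity pointwise via the explicit formula for induced class functions; both arguments (like the paper's) implicitly use that $\sigma|_{T\setminus S}$ is itself a face of $\Phi$.
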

\begin{proof}

First, we show that \[ \sum_{Q: T \setminus S \subseteq Q \subseteq T} (-1)^{|T \setminus Q|} f_Q(\Phi|_T, \mathfrak{G}) = \sum_{R:  S \subseteq R \subseteq T} h_R(\Phi|_T, \mathfrak{G}). \]

We write 
\begin{align*}
    \sum_{Q: T \setminus S \subseteq Q \subseteq T} (-1)^{|T \setminus Q|} f_Q(\Phi|_T, \mathfrak{G}) & = \sum_{Q: T \setminus S \subseteq Q \subseteq T} (-1)^{|T \setminus Q|} \sum_{R \subseteq Q} h_R(\Phi|_T, \mathfrak{G}) \\
    & = \sum_{R \subseteq T} \left( \sum_{Q: R \cup (T \setminus S) \subseteq Q \subseteq T} (-1)^{|T \setminus Q|} \right) h_R(\Phi|_T, \mathfrak{G}).
\end{align*}
We see that the coefficient of $h_R(\Phi|_T, \mathfrak{G})$ is $0$ unless $R \cup (T \setminus S) = T$, in which case it is $1$. Thus
\begin{align*}
    \sum_{Q: (T \setminus S) \subseteq Q \subseteq T} (-1)^{|T \setminus Q|} f_Q(\Phi|_T, \mathfrak{G}) & = \sum_{R: R \cup (T \setminus S) = T} h_R(\Phi, \mathfrak{G}) \\
    & = \sum_{R: S \subseteq R \subseteq T} h_R(\Phi, \mathfrak{G}).
\end{align*}

 Let $\mathfrak{g} \in \mathfrak{G}$. 
 We prove that \begin{equation} \sum_{Q: (T \setminus S) \subseteq Q \subseteq T} (-1)^{|T \setminus Q|} f_Q(\Phi|_T, \mathfrak{G}; \mathfrak{g}) = \sum_{\tau \in \Fix_{\mathfrak{g}}(\Phi|_{T \setminus S})} h_{S}((\lk_{\Phi}(\tau))|_{S}, \mathfrak{G}_{\tau}, \mathfrak{g}). \label{eq:evaluated} \end{equation}
 First, observe that
 \begin{align*}
     \sum_{Q: (T \setminus S) \subseteq Q \subseteq T} (-1)^{|T \setminus Q|} f_Q(\Phi|_T, \mathfrak{G}; \mathfrak{g})
     & = \sum_{\sigma \in \Fix_{\mathfrak{g}}(\Phi|_T): (T \setminus S) \subseteq \kappa(\sigma)} (-1)^{|T \setminus \kappa(\sigma)|} \\
     & = \sum_{\tau \in \Fix_{\mathfrak{g}}(\Phi|_{T \setminus S})} \sum_{\sigma \in \Fix_{\mathfrak{g}}(\Phi|_T): \tau \subseteq \sigma} (-1)^{|T \setminus\kappa(\sigma)|} \\
     & = \sum_{\tau \in \Fix_{\mathfrak{g}}(\Phi|_{T \setminus S})} \sum_{\sigma \in \Fix_{\mathfrak{g}}(\lk_{\Phi|_T}(\tau))} (-1)^{|S \setminus  \kappa(\sigma))|}.
 \end{align*}
 
 For a fixed $\tau$, the inner summation becomes 
 \begin{align*}
     \sum_{\sigma \in \Fix_{\mathfrak{g}}(\lk_{\Phi|_T}(\tau))} (-1)^{|S \setminus \kappa(\sigma)|} &= \sum_{R: R \subseteq S} (-1)^{|(S \setminus R)|} f_R(\lk_{\Phi}(\tau)|_{ S}, \mathfrak{G}_{\tau}; \mathfrak{g}) \\ &= h_{ S}(\lk_{\Phi}(\tau)|_{S}, \mathfrak{G}_{\tau}; \mathfrak{g})
 \end{align*}
 Hence we have shown Equation \eqref{eq:evaluated}.

Let $\mathcal{T}_{T \setminus S}(\Phi)$ be a traversal for $\mathfrak{G}$ acting on $\kappa^{-1}(T \setminus S)$. Fix $\tau \in \mathcal{T}_{T \setminus S}(\Phi)$. Then 
\begin{align*} h_{S} ((\lk_{\Phi}(\tau))|_{S}, \mathfrak{G}_{\tau}) \uparrow_{\mathfrak{G}_{\tau}}^{\mathfrak{G}}(\mathfrak{g}) &= \frac{1}{|\mathfrak{G}_{\tau}|} \sum_{\substack{\mathfrak{h} \in \mathfrak{G} \\ \mathfrak{h}^{-1}\mathfrak{gh} \in \mathfrak{G}_{\tau}}} h_{S} ((\lk_{\Phi}(\tau))|_{S}, \mathfrak{G}_{\tau};\mathfrak{h}^{-1}\mathfrak{gh}) \\ 
&= \frac{1}{|\mathfrak{G}_{\tau}|} \sum_{\substack{\mathfrak{h} \in \mathfrak{G} \\ \mathfrak{g} \in \mathfrak{G}_{\mathfrak{h}\tau}}} h_{S} ((\lk_{\Phi}(\mathfrak{h}\tau))|_{S}, \mathfrak{G}_{\mathfrak{h}\tau};\mathfrak{g}) \\
&= \frac{1}{|\mathfrak{G}_{\tau}|} \sum_{\substack{\rho \simeq \tau \\ \mathfrak{g} \in \mathfrak{G}_{\rho}}} \sum_{\substack{\mathfrak{h} \in \mathfrak{G} \\ \mathfrak{h}\tau = \rho}} h_{S} ((\lk_{\Phi}(\rho))|_{S}, \mathfrak{G}_{\rho};\mathfrak{g}) \\
& =  \sum_{\substack{\rho \simeq \tau \\ \mathfrak{g} \in \mathfrak{G}_{\rho}}}  h_{S} ((\lk_{\Phi}(\rho))|_{S}, \mathfrak{G}_{\rho};\mathfrak{g}) \left( \sum_{\substack{\mathfrak{h} \in \mathfrak{G} \\ \mathfrak{h}\tau = \rho}} \frac{1}{|\mathfrak{G}_{\tau}|} \right) \\
& = \sum_{\substack{\rho \simeq \tau \\ \mathfrak{g} \in \mathfrak{G}_{\rho}}}  h_{S} ((\lk_{\Phi}(\rho))|_{S}, \mathfrak{G}_{\rho};\mathfrak{g})
\end{align*}
 where the first equality is a formula for induced class functions. The second equality comes from applying conjugation. The third equality is the result of replacing one summation with two summmations, the outer summation being over faces $\rho$ that are equivalent to $\tau$, and then over group elements $\mathfrak{h}$ such that $\mathfrak{h}\tau = \rho$. 
Thus, summing over all $\tau \in \mathcal{T}_{T \setminus S}(\Phi)$, we obtain 
\begin{align*}
    \sum_{\tau \in \mathcal{T}_{T \setminus S}(\Phi)} h_{S} ((\lk_{\Phi}(\tau))|_{S}, \mathfrak{G}_{\tau}) \uparrow_{\mathfrak{G}_{\tau}}^{\mathfrak{G}}(\mathfrak{g}) &= 
    \sum_{\tau \in \mathcal{T}_{T \setminus S}(\Phi)} \sum_{\substack{\rho \simeq \tau \\ \mathfrak{g} \in \mathfrak{G}_{\rho}}}  h_{S} ((\lk_{\Phi}(\rho))|_{S}, \mathfrak{G}_{\rho};\mathfrak{g}) \\ 
    & = \sum_{\substack{\tau \in \Fix_{\mathfrak{g}}(\Phi) \\ \kappa(\tau) = T \setminus S}} h_{S} ((\lk_{\Phi}(\tau))|_{S}, \mathfrak{G}_{\tau};\mathfrak{g}) \\
    & = \sum_{Q: (T \setminus S) \subseteq Q \subseteq T} (-1)^{|T \setminus Q|} f_Q(\Phi|_T, \mathfrak{G}; \mathfrak{g}).
\end{align*}
\end{proof}

\begin{theorem}
Let $\Phi$ be a balanced relative simplicial complex of dimension $d$-1. Suppose that $\mathfrak{G}$ acts on $\Phi$, and that $\Phi$ satisfies $(S_{\ell}).$

Give $S \subseteq T \subseteq [d]$ with $|S| \leq \ell$, let $\mathcal{T}$ be a transversal for $\mathfrak{G}$ acting on $\kappa^{-1}(T \setminus S)$. Then we have 
\begin{align*} h_{S,T}(\Phi, \mathfrak{G}) = \sum_{\tau \in \mathcal{T}_{T \setminus S}(\Phi)} \chi_{\widetilde{H}_{|S|-2}(\lk_{\Phi}(\tau)|_S), \mathfrak{G}_{\tau}} \uparrow_{\mathfrak{G}_{\tau}}^{\mathfrak{G}}.\end{align*}

In particular, $h_{S,T}(\Phi, \mathfrak{G}) \geq_{\mathfrak{G}} 0.$
\label{thm:interpretation}
\end{theorem}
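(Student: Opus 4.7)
The plan is to start from the second identity in Theorem \ref{lem:fundamental}, which already writes $h_{S,T}(\Phi,\mathfrak{G})$ as an induced sum
\[ h_{S,T}(\Phi, \mathfrak{G}) = \sum_{\tau \in \mathcal{T}_{T \setminus S}(\Phi)} h_{S}((\lk_{\Phi}(\tau))|_{S}, \mathfrak{G}_{\tau}) \uparrow_{\mathfrak{G}_{\tau}}^{\mathfrak{G}}, \]
and then to show that for each representative $\tau$ the summand $h_{S}((\lk_{\Phi}(\tau))|_{S}, \mathfrak{G}_{\tau})$ equals the single homology character $\chi_{\widetilde{H}_{|S|-2}(\lk_{\Phi}(\tau)|_{S}), \mathfrak{G}_{\tau}}$. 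Since inducing and summing effective characters again yields an effective character, positivity follows at once.

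To collapse the fixed-summand, set $\Psi = \lk_{\Phi}(\tau)|_{S}$, which is a balanced relative simplicial complex of dimension $|S|-1$ with color set $S$. Theorem \ref{thm:eulerchar2} applied to $\Psi$ with color set $S$ itself gives
\[ h_{S}(\Psi,\mathfrak{G}_{\tau}) = \sum_{i=0}^{|S|} (-1)^{|S|-i}\,\chi_{\widetilde{H}_{i-1}(\Psi), \mathfrak{G}_{\tau}}, \]
so it suffices to show that $\widetilde{H}_{i-1}(\Psi) = 0$ for $i \leq |S|-1$; then only the top term $i = |S|$ survives and the formula collapses to the claimed single character.

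The vanishing of lower homology is the main work, and it is where the hypothesis $|S| \leq \ell$ enters. By Lemma \ref{lem:link}, $\lk_{\Phi}(\tau)$ inherits Serre's condition $(S_{\ell})$ from $\Phi$. Then Theorem \ref{thm:restriction} applied to $\lk_{\Phi}(\tau)$ shows that $\Psi = \lk_{\Phi}(\tau)|_{S}$ also satisfies $(S_{\ell})$. Applied to the empty face of $\Psi$, condition $(S_{\ell})$ gives $\widetilde{H}_{i-1}(\Psi) = 0$ for $i \leq \min(\dim \Psi, \ell - 1) = \min(|S|-1, \ell-1) = |S|-1$, exactly as required. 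The main obstacle is this combined invocation: one must justify that the link-restriction operations preserve Serre's condition and that the dimension of $\Psi$ is indeed $|S|-1$ (the latter uses that $\tau$ extends to a facet of $\Phi$, so the facets of $\Psi$ use every color in $S$).

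Finally, for effectivity, $\widetilde{H}_{|S|-2}(\Psi)$ is a genuine $\mathfrak{G}_{\tau}$-module, so its character $\chi_{\widetilde{H}_{|S|-2}(\Psi), \mathfrak{G}_{\tau}}$ is an effective character; induction to $\mathfrak{G}$ preserves effectivity, and a sum over $\tau \in \mathcal{T}_{T\setminus S}(\Phi)$ of effective characters is effective. This yields $h_{S,T}(\Phi,\mathfrak{G}) \geq_{\mathfrak{G}} 0$.
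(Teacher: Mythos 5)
Your proposal is correct and follows essentially the same route as the paper: both start from the induced-character identity of Theorem \ref{lem:fundamental}, apply Theorem \ref{thm:eulerchar2} to the fixed summand, and use Serre's condition together with $|S| \leq \ell$ to kill all but the top homology term. The only cosmetic difference is the order of operations — you take the link first (via Lemma \ref{lem:link}) and then restrict colors via Theorem \ref{thm:restriction}, whereas the paper restricts $\Phi$ to $T$ first and then applies the $(S_{\ell})$ definition at the face $\tau$; these agree because $\lk_{\Phi|_T}(\tau) = \lk_{\Phi}(\tau)|_S$.
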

Observe that this result implies Theorem \ref{thm:intro1}.
\begin{proof}
Now suppose that $\Phi$ satisfies $(S_{\ell})$ and $|S| \leq \ell$. Let $\tau \in \mathcal{T}_{T \setminus S}(\Phi)$. By Theorem \ref{thm:eulerchar2},we have \begin{equation} h_S((\lk_{\Phi}(\tau))|_S, \mathfrak{G}_{\tau}) = \sum_{i=0}^{|S|} (-1)^{|S|-i} \chi_{\widetilde{H}_{i-1}(\lk_{\Phi}(\tau)|_S), \mathfrak{G}_{\tau}}.  \label{eq:alternating} \end{equation}
Since $\Phi$ satisfies $(S_{\ell})$, we know $\Phi|_T$ satisfies $(S_{\ell})$ as well, and hence $\widetilde{H}_{i-1}(\lk_{\Phi|_T}(\tau)) = 0$ for $i \leq \min(\dim(\lk_{\Phi|_T}(\tau)), \ell-1)$. Since $\dim(\lk_{\Phi|_T}(\tau)) = |S|-1$, and $|S| \leq \ell$, we see that the homology is concentrated in the top dimension. Moreover, $\lk_{\Phi|_T}(\tau) = \lk_{\Phi}(\tau)|_S$. Thus all the terms on the right hand side of Equation \eqref{eq:alternating} are $0$, except when $i = |S|$. Hence \[h_S((\lk_{\Phi}(\tau))|_S, \mathfrak{G}_{\tau}) = \chi_{\widetilde{H}_{|S|-2}(\lk_{\Phi}(\tau)|_S), \mathfrak{G}_{\tau}}. \]

Hence $h_S((\lk_{\Phi}(\tau))|_S, \mathfrak{G}_{\tau})$ is an effective character, and thus $h_{S,T}(\Phi, \mathfrak{G})$ is also an effective character.
\end{proof}

We now prove a Lemma that will be used to prove Theorems \ref{thm:intro2} and \ref{thm:intro3}. 



\begin{lemma}
Let $\Phi$ be a balanced relative simplicial complex of dimension $d-1$. Let $\mathfrak{G}$ act on $\Phi$. Suppose that $\Phi$ satisfies $(S_{\ell})$, and let $S \subseteq [d]$. Let $i \leq \ell$. Then
\[\sum_{T \subseteq S: |T| \geq \ell} \binom{|T| - (\ell-i)}{i} h_T(\Phi, \mathfrak{G}) \geq_{\mathfrak{G}} 0. \]
\label{lem:fundamental2}
\end{lemma}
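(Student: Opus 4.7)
The plan is to rewrite the sum so that each term is of the form $h_{R, T'}(\Phi, \mathfrak{G})$ with $|R| = \ell$, which by Theorem \ref{thm:interpretation} is $\geq_{\mathfrak{G}} 0$. The key combinatorial step is a double-counting identity: fix the natural linear order on $[d]$, and for each $T \subseteq S$ with $|T| \geq \ell$ let $P(T) \subseteq T$ denote the set of $\ell-i$ smallest elements of $T$. Then $\binom{|T|-\ell+i}{i} = \binom{|T \setminus P(T)|}{i}$ is exactly the number of $\ell$-subsets $R \subseteq T$ that contain $P(T)$, namely $R = P(T) \cup B$ with $B$ ranging over $i$-subsets of $T \setminus P(T)$.

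Substituting this identity and swapping the order of summation, the left-hand side becomes
\[
\sum_{T \subseteq S:\, |T| \geq \ell} \binom{|T|-\ell+i}{i}\, h_T(\Phi, \mathfrak{G})
= \sum_{\substack{R \subseteq S \\ |R| = \ell}}\ \sum_{\substack{T:\ R \subseteq T \subseteq S \\ P(T) \subseteq R}} h_T(\Phi, \mathfrak{G}).
\]
For fixed $R = \{r_1 < r_2 < \cdots < r_\ell\}$, I will verify that the condition $P(T) \subseteq R$ together with $R \subseteq T$ is equivalent to requiring every element of $T \setminus R$ to exceed $r_{\ell-i}$. Indeed, if some $a \in T \setminus R$ satisfies $a \leq r_{\ell-i}$ then $a$ lies among the $\ell-i$ smallest elements of $T$, forcing $P(T) \not\subseteq R$; conversely, if every element of $T \setminus R$ exceeds $r_{\ell-i}$, then the $\ell-i$ smallest elements of $T$ are precisely $r_1, \ldots, r_{\ell-i}$, which all lie in $R$.

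Setting $W(R) = \{x \in S \setminus R : x > r_{\ell-i}\}$ (with the convention that the constraint is vacuous when $i = \ell$, so $W(R) = S \setminus R$), the inner sum runs over all $T$ with $R \subseteq T \subseteq R \cup W(R)$ and therefore equals $h_{R,\, R \cup W(R)}(\Phi, \mathfrak{G})$. Since $|R| = \ell$, Theorem \ref{thm:interpretation} yields $h_{R,\, R \cup W(R)}(\Phi, \mathfrak{G}) \geq_{\mathfrak{G}} 0$, and summing over all $\ell$-subsets $R \subseteq S$ proves the lemma. The main obstacle is identifying the correct ``canonical $(\ell-i)$-subset'' rule $T \mapsto P(T)$; once this is in place, the rest is a routine verification and an appeal to the previously established positivity of the generalized flag $h$-numbers $h_{R,T'}$.
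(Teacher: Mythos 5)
Your proof is correct and follows essentially the same route as the paper's: both interpret $\binom{|T|-(\ell-i)}{i}$ as counting the $\ell$-subsets $R \subseteq T$ containing the $\ell-i$ smallest elements of $T$, swap the order of summation, and identify the inner sum as $h_{R,\,R\cup W(R)}(\Phi,\mathfrak{G})$ with $|R|=\ell$, which is effective by Theorem \ref{thm:interpretation}. One small wording issue: an arbitrary $a \in T\setminus R$ with $a \le r_{\ell-i}$ need not itself lie among the $\ell-i$ smallest elements of $T$ (smaller elements of $T\setminus R$ may displace it), but the minimal such $a$ does---or one can simply count the elements of $T$ below $r_{\ell-i}$---so the asserted equivalence, and hence the proof, stands.
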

\begin{proof}

For a set $S \subseteq [d]$, and $i < d$, we let $m_i(S)$ be the $(\ell-i)$th smallest element, and let \[M_i(S) = \{x \in S: x \leq m_i(S) \}.\] So if we write $S = \{s_1, \ldots, s_k \}$ with $s_1 < s_2 < \cdots < s_k$, then $m_i(S) = s_{\ell-i}$ and $M_i(S) = \{s_1, \ldots, s_{\ell-i} \}$.

Given $i \leq \ell$, and $|T| \geq \ell$, we see that $\binom{|T|-(\ell-i)}{i}$ counts the number of subsets $R \subseteq T$ with $|R| = \ell$ and $M_i(R) = M_i(T)$.

We see that \begin{align*} \sum_{T \subseteq S: |T| \geq \ell} \binom{|T| - (\ell-i)}{i} h_T(\Phi, \mathfrak{G}) & = \sum_{T \subseteq S: |T| \geq \ell} \sum_{\substack{R \subseteq T: |R| = \ell \\ M_i(R) = M_i(T)}} h_T(\Phi, \mathfrak{G}) \\
& = \sum_{R \subseteq [d]: |R| = \ell } \sum_{\substack{T: R \subseteq T \subseteq S\\ M_i(R) = M_i(T)}}  h_T(\Phi, \mathfrak{G}) \\ 
& = \sum_{R \subseteq [d]: |R| = \ell } \sum_{\substack{T: R \subseteq T \\ T \setminus M_{ i}(R) \subseteq S \setminus [m_i(R)]}}  h_T(\Phi, \mathfrak{G}).
\end{align*}
The first equality comes from our interpretation of the binomial coefficient, while the second equality comes from rearranging the order of summation. The third equality comes from recognizing that $R \subseteq T \subseteq S$ with $M_i(R) = M_i(T)$ holds if and only if $T \setminus M_i(R) \subseteq S \setminus [m_i(R)].$ For each $R$, the inner sum becomes \[ \sum_{T: R \subseteq T \subseteq M_i(R) \cup (S \setminus [m_i(R)])} h_T(\Phi, \mathfrak{G}) = h_{R, M_i(R) \cup (S \setminus [m_i(R)])}(\Phi, \mathfrak{G}) \]
which is an effective character by Lemma \ref{lem:fundamental}, since $|R| = \ell$. Thus we have a sum of effective characters, which is effective. Hence \[\sum_{T \subseteq S: |T| \geq \ell} \binom{|T| - (\ell-i)}{i} h_T(\Phi, \mathfrak{G}) \geq_{\mathfrak{G}} 0 .\]\end{proof}

\begin{proof}[Proof of Theorem \ref{thm:intro3}]
Let $\Phi$, $d$, $\mathfrak{G}$ be as in the statement of the Theorem. Suppose $f_{i-1}(\Phi, \mathfrak{G}) = 0$ for all $i < \ell$, and that $\Phi$ satisfies $(S_{\ell})$. 

Let $i \geq \ell$. Then we have
\begin{align*}
    & (i-\ell+2)f_i(\Phi, \mathfrak{G}) & - & (d-i) f_{i-1}(\Phi, \mathfrak{G}) &= \\
    & \sum_{\substack{S \subseteq [d] \\ |S| = i+1}} (|S|-(\ell-1))f_S(\Phi, \mathfrak{G}) & - & \sum_{R \subseteq [d]: |R| = i} (d-|R|) f_R(\Phi, \mathfrak{G}) &= \\
    & \sum_{\substack{S \subseteq [d] \\ |S| = i+1}} (|S|-(\ell-1))f_S(\Phi, \mathfrak{G}) & - & \sum_{R \subseteq [d]: |R| = i} \sum_{x \in [d]\setminus R} f_R(\Phi, \mathfrak{G}) &= \\
    & \sum_{\substack{S \subseteq [d] \\ |S| = i+1}} (|S|-(\ell-1))f_S(\Phi, \mathfrak{G}) & - & \sum_{S \subseteq [d]: |S| = i+1} \sum_{x \in S} f_{S \setminus \{x\}}(\Phi, \mathfrak{G}) &= \\
    & \sum_{\substack{S \subseteq [d] \\ |S| = i+1}} \sum_{T \subseteq S} (|S|-(\ell-1))h_T(\Phi, \mathfrak{G}) & - &   \sum_{\substack{S \subseteq [d] \\ |S| = i+1}} \sum_{x \in S} \sum_{T \subseteq S \setminus \{x\}} h_T(\Phi, \mathfrak{G}) &=\\
       & \sum_{\substack{S \subseteq [d] \\ |S| = i+1}} \left(\sum_{T \subseteq S} (|S|-(\ell-1))h_T(\Phi, \mathfrak{G})\right. & - &   \left.\sum_{x \in S} \sum_{T \subseteq S \setminus \{x\}} h_T(\Phi, \mathfrak{G})\right) &=\\
      &   \sum_{\substack{S \subseteq [d] \\ |S| = i+1}} \left( \sum_{T \subseteq S} (|S|-(\ell-1))h_T(\Phi, \mathfrak{G})\right. & - &  \left.\sum_{T \subseteq S } |S \setminus T| h_T(\Phi, \mathfrak{G})\right) &=\\  
        & \sum_{\substack{S \subseteq [d] \\ |S| = i+1}} \sum_{T \subseteq S} (|T| - (\ell-1))h_T(\Phi, \mathfrak{G}) & = & \\ & \sum_{\substack{S \subseteq [d] \\ |S| = i+1}} \sum_{\substack{T \subseteq S \\ |T| \geq \ell}} \binom{|T| - (\ell-1)}{1}h_T(\Phi, \mathfrak{G}) & \geq_{\mathfrak{G}} & 0. 
\end{align*}
The first equality comes from rewriting the $f$-vector in terms of the flag $f$-vector. The second equality comes from rewriting the second summation to replace $(d-|R|)$ with a sum over elements of $[d]\setminus R$. The third equality comes from reindexing the summations by setting $S = R \cup \{x\}$. The fourth equality involves combining the summations, and then expressing the flag $f$-vector in terms of the flag $h$-vector. The fifth equality follows from simplifying the second summations by computing the coefficient of a given $h_T(\Phi, \mathfrak{G})$. Combining like terms, and using the fact that $h_T(\Phi, \mathfrak{G}) = 0$ when $|T| < \ell$, we arrive at the final expression. However, then we can apply Lemma \ref{lem:fundamental2}. Thus $(d-i)f_{i-1}(\Phi, \mathfrak{G}) \leq (i-\ell+2)f_i(\Phi, \mathfrak{G})$ for all $i \geq \ell$.

Applying Proposition \ref{prop:flawlesssequence}, we see that $(f_{\ell-1}(\Phi, \mathfrak{G}), \ldots, f_{d-1}(\Phi, \mathfrak{G}))$ is equivariantly flawless.
\end{proof}

\begin{proof}[Proof of Theorem \ref{thm:intro2}]
We see that \begin{align*}
\sum_{k=\ell}^{j} \binom{d-k}{j-k}\binom{k-(\ell-i)}{i} h_k(\Phi, \mathfrak{G})
&= \sum_{k=\ell}^{j} \binom{d-k}{j-k}\binom{k-(\ell-i)}{i} \sum_{T \subseteq S: |T| =k} h_T(\Phi, \mathfrak{G})\\
&= \sum_{k=\ell}^{j}\sum_{T \subseteq S: |T| =k} \binom{d-k}{j-k}  \binom{|T|-(\ell-i)}{i}h_T(\Phi, \mathfrak{G})\\
&= \sum_{k=\ell}^{j}\sum_{T \subseteq S: |T| =k} \sum_{S \subseteq [d]: |S| = j, T \subseteq S}  \binom{|T|-(\ell-i)}{i}h_T(\Phi, \mathfrak{G})\\
&= \sum_{T \subseteq S: |T| \geq \ell} \sum_{S \subseteq [d]: |S| = j, T \subseteq S}  \binom{|T|-(\ell-i)}{i}h_T(\Phi, \mathfrak{G})\\
& = \sum_{S \subseteq [d]: |S| = j} \sum_{T \subseteq S: |T| \geq \ell} \binom{|T|-(\ell-i)}{i}h_T(\Phi, \mathfrak{G}) \\
& \geq_{\mathfrak{G}} 0
\end{align*}
where the last inequality comes from Lemma \ref{lem:fundamental2}, applied to $S$.
\end{proof}

\section{Mixed Graphs}
\label{sec:mixedgraph}

Given a finite set $V$, a \emph{mixed graph} is a triple $(V, U, D)$, where $U$ is a set of undirected edges, and $D$ is a set of directed edges.  A mixed graph is \emph{acyclic} if it does not contain a directed cycle. A \emph{mixed cycle} in $G$ is a cycle in the underlying undirected graph that has at least one directed edge. A \emph{near cycle} is a mixed cycle of length three that has exactly one undirected edge.

There are two polynomial invariants associated to acyclic mixed graphs: the weak and strong chromatic polynomial, both introduced in \cite{beck-et-al-2}, motivated by work in \cite{beck-et-al}. Given an acyclic mixed graph $\spe{g},$ the \emph{weak chromatic polynomial} $\chi(\spe{g}, k)$ counts the number of functions $f: V \to [k]$ subject to:
\begin{enumerate}
    \item For every $uv \in U$, we have $f(u) \neq f(v)$.
    \item For every $(u,v) \in D,$ we have $f(u) \leq f(v)$.
\end{enumerate}
Let $F(G)$ be the set of all weak graph colorings.
The \emph{strong} chromatic polynomial $\bar{\chi}(\spe{g}, k)$ counts similar functions, only with strict inequalities for the second condition instead of the weak inequality. We introduced quasisymmetric function generalizations of both polynomials in \cite{white-1}, by showing how both polynomial invariants come from characters on $\spe{MG}$.

 An automorphism of a mixed graph is a bijection on the vertices which preserve directed edges and undirected edges. Let $\Aut(G)$ be the group of automorphisms of a mixed graph $G$. 

Let $\mathfrak{G} \subseteq \Aut(G).$ Then $\mathfrak{G}$ acts on $F(G)$, via $\mathfrak{g}f = f \circ \mathfrak{g}^{-1}$ for $f \in F(G)$ and $\mathfrak{g} \in \mathfrak{G}$. Given $\mathfrak{g} \in \mathfrak{G}$, we define 
\[ \chi(G, \mathfrak{G}, \mathbf{x}; \mathfrak{g}) = \sum_{f \in \Fix_{\mathfrak{g}}(F(G))} \mathbf{x}^f. \]
 We show that $\chi(G, \mathfrak{G}, \mathbf{x})$ is a Hilbert quasisymmetric class function for a relative simplicial complex.

Given a mixed graph $G$, we let $P$ be the transitive closure of the relation given by the directed edges. When $G$ has no directed cycles, $P$ is a poset.
Let $I \subseteq J$ be two order ideals. We say that the pair $(I,J)$ is \emph{stable} if there is no undirected edge in $J \setminus I$.
 Let $J(G)$ be the set of order ideals of $P$, and let 
\[\Phi(G) = \{I_1 \subset I_2 \subset \cdots \subset I_k: (I_j, I_{j+1}) \mbox{ is stable for all } 0 \leq j \leq k \}\]
We define $I_0 = \emptyset$, and $I_{k+1} = N$. Then $\Phi(G)$ is a balanced relative simplicial complex, with coloring $\kappa(I) = |I|.$

\begin{proposition}
Let $G$ be a mixed graph with no directed cycles, and let $\mathfrak{G} \subseteq \Aut(G)$. Then $\chi(G, \mathfrak{G}, \mathbf{x}) = \Hilb(\Phi(G), \mathfrak{G}, \mathbf{x})$. Thus $\chi(G, \mathfrak{G}, x) = \ps \Hilb(\Phi(G), \mathfrak{G}, \mathbf{x}).$
\label{prop:graphtocomplex}
\end{proposition}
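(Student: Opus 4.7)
The plan is to establish a monomial-by-monomial bijection between $\mathfrak{g}$-fixed weak colorings of $G$ and pairs consisting of a $\mathfrak{g}$-fixed face of $\Phi(G)$ together with a strictly increasing sequence of positive integers indexing the image of the coloring.

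Given a weak coloring $f: V \to \mathbb{Z}_{>0}$, let $i_1 < i_2 < \cdots < i_{k+1}$ be the values used by $f$, and set $I_j = f^{-1}(\{i_1, \ldots, i_j\})$ for $j = 0, 1, \ldots, k+1$ (so $I_0 = \emptyset$ and $I_{k+1} = V$). The directed-edge condition $f(u) \leq f(v)$ for $(u,v) \in D$ is equivalent to saying every $I_j$ is an order ideal of the transitive closure poset $P$, and the undirected-edge condition $f(u) \neq f(v)$ for $uv \in U$ is equivalent to saying no undirected edge lies in $I_j \setminus I_{j-1}$, i.e., $(I_{j-1}, I_j)$ is stable. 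Hence $\sigma_f := \{I_1, \ldots, I_k\}$ is a face of $\Phi(G)$. Conversely, given $\sigma = \{I_1 \subsetneq \cdots \subsetneq I_k\} \in \Phi(G)$ and integers $i_1 < \cdots < i_{k+1}$, the formula $f(v) = i_j$ for $v \in I_j \setminus I_{j-1}$ recovers a weak coloring, and these operations are mutually inverse.

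For $\mathfrak{G}$-equivariance, observe that $\mathfrak{g} I_j = \mathfrak{g}(f^{-1}(\{i_1,\ldots,i_j\})) = (\mathfrak{g}f)^{-1}(\{i_1,\ldots,i_j\})$, so the bijection intertwines the $\mathfrak{G}$-actions on colorings and on pairs. Because $\mathfrak{g}$ preserves set sizes and the elements of $\sigma_f$ are linearly ordered by cardinality, $\mathfrak{g}\sigma_f = \sigma_f$ as a set forces $\mathfrak{g}I_j = I_j$ for each $j$. Consequently $\mathfrak{g}f = f$ if and only if $\mathfrak{g}\sigma_f = \sigma_f$. The monomial contributed by $f$ is
\[ \mathbf{x}^f = \prod_{j=1}^{k+1} x_{i_j}^{|I_j \setminus I_{j-1}|} = \prod_{j=1}^{k+1} x_{i_j}^{|I_j|-|I_{j-1}|}, \]
which is exactly one term of $M_{\kappa(\sigma_f), n}$ under the definition of the monomial quasisymmetric basis, with $|I_0|=0$ and $|I_{k+1}|=n$. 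Summing over $\mathfrak{g}$-fixed colorings and grouping by the associated face produces
\[ \chi(G, \mathfrak{G}, \mathbf{x}; \mathfrak{g}) = \sum_{\sigma \in \Phi(G):\, \mathfrak{g}\sigma = \sigma} M_{\kappa(\sigma), n} = \Hilb(\Phi(G), \mathfrak{G}, \mathbf{x}; \mathfrak{g}), \]
establishing the first identity. The second identity follows by applying the principal specialization $x_i = 1$ for $i \leq x$ and $x_i = 0$ for $i > x$: the left-hand side counts $\mathfrak{g}$-fixed weak colorings with values in $[x]$, which is the definition of $\chi(G, \mathfrak{G}, x; \mathfrak{g})$.

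The argument is essentially bookkeeping once the bijection is set up correctly; the only delicate points are verifying that the stability condition on chains matches exactly the constraint coming from undirected edges, and confirming that setwise fixed chains are pointwise fixed, which uses crucially that $\kappa$ is preserved by $\mathfrak{G}$ and distinguishes the elements of any chain.
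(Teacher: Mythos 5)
Your proposal is correct and follows essentially the same route as the paper: both set up the bijection $f \leftrightarrow (\sigma_f, i_1 < \cdots < i_{k+1})$ via level sets of the coloring, verify that the directed-edge and undirected-edge conditions translate respectively into the order-ideal and stability conditions defining $\Phi(G)$, and note that a setwise $\mathfrak{g}$-fixed chain is pointwise fixed so that fixed colorings correspond to fixed faces. Your treatment of the equivariance step is if anything slightly more explicit than the paper's.
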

\begin{proof}
Let $V$ be the vertex set of $G.$
Let $\mathfrak{g} \in \mathfrak{G}$, and write \[\Hilb(\Phi(G), \mathfrak{G}, \mathbf{x}; \mathfrak{g}) = \sum_{I_{\cdot} \in \Phi} \sum_{i_0 < i_1 < \cdots < i_k} x_{i_0}^{|I_1|} x_{i_1}^{|I_2 \setminus I_1|} \cdots x_{i_k}^{|N \setminus I_{k}|}.\] 
Given $I_{\cdot}$ and $(i_0, \ldots, i_k)$, define a function $f:V \to \mathbb{N}$ by $f(n) = i_j$ if $n \in I_{j+1} \setminus I_j,$ where $I_{k+1}=N$. We claim that $f$ is a weak coloring. Let $uv$ be an undirected edge, and suppose $f(u) = i_j$. If $f(v) = i_j$, then $I_{j+1} \setminus I_j$ contains an edge, which is a contradiction. Thus $f(v) \neq i_j$. Let $(u,v)$ be a directed edge, and let $f(u) = i_j$. Then $u \geq v$ in $P$. Since $I_j$ is an ideal, $v \in I_j$, so $f(v) \leq i_j$. Hence $f$ is a weak coloring.

Moreover, every coloring arises from this construction.
Let $f$ be a proper coloring of $G$ that is fixed by $\mathfrak{g}$. Let $I_j = f^{-1}([j])$. Then $I_j \subseteq I_k$ whenever $j \leq k$. If we remove the repeats, we get a sequence $I_1 \subset I_2 \subset \cdots \subset I_k$, which we denote by $I_{\cdot}(f)$. Moreover, we claim each $I_j$ must be an order ideal of $P$. If not, then there exists $x \in I_j$, $y \leq x$ with $y \not\in I_j$. This implies that $f(y) > f(x)$, and that there is a directed path in $G$ from $x$ to $y$. Hence there is an edge $(u,v)$ on the path where $f(u) > f(v)$, a contradiction.

Therefore $I_j$ is an order ideal for all $j$. Similarly, $I_j \setminus I_{j-1}$ is stable for all $j.$ Finally, the chain $I_1 \subset I_2 \subset \cdots \subset I_k$ is fixed by $\mathfrak{g}$. Thus $I_{\cdot}(f) \in \Phi(G)$.
We also write $f(N) = \{i_1, \ldots, i_k \}$ where $i_1 < i_2 < \cdots, i_k$. Then the map $f \mapsto (I_{\cdot}(f), (i_1, \cdots, i_k))$ defines a bijection between colorings and terms of $\Hilb(\Phi(G), \mathfrak{G}, \mathbf{x}; \mathfrak{g}).$
\end{proof}


\begin{proof}[Proof of Theorem \ref{thm:mixedgraphs}]
Let $P_G$ be the partial order given by the transitive closure of the directed edges of $G$. Let $\Delta(P)$ be the order complex of $J(P)$.
We write $\Phi(G) = (\Delta(P), \Gamma(G))$, where $\Gamma(P)$ consists of chains of ideals $I_1 \subset I_2 \subset \cdots \subset I_k$ such that $I_j \setminus I_{j-1}$ contains an undirected edge of $G$ for some $j$. Since $\Delta(J(P))$ is shellable, it is Cohen-Macaulay. If $\Gamma = \emptyset$, then we are done. 

Otherwise, it suffices by Proposition \ref{prop:relative} to show that $\dim \Gamma(G) = \dim \Delta(P) - 1$, and that $\Gamma(G)$ satisfies $(S_{m(G)-1})$. We first show that $\dim \Gamma = \dim(\Delta(P))-1$. To see this, consider any undirected edge $e$ of $G$. Since $G$ has no near-cycles, $P/e$ is still a partial order, so we can consider any linear extension $\ell$ of $P/e$. Then we can view $\ell$ as a set composition $C_1|\cdots|C_k$ of $V$ where $C_i = e$ for some $i$, and the other blocks are singletons. Then $C_1 \subset C_1 \cup C_2 \subset \cdots \subset C_1 \cup \cdots \cup C_k$ is an element of $\Gamma(G)$ of size $\dim(\Delta(P))$.

Suppose that there is only one undirected edge $e$, and that $G$ has no near cycles. Consider the function $f: V(\Gamma(M)) \rightarrow V(J(P/e))$ given by 
\[ f(I) = \begin{cases} I/e & e \subseteq I \\ I & \mbox{ otherwise} \end{cases} \]
Then $f$ is an isomorphism between $\Gamma(G)$ and $\Delta(G/e)$. Hence $\Gamma(G)$ is relatively Cohen-Macaulay.

Suppose that $m(G) = 2$. It suffices to show that $\Gamma(G)$ is non-empty. However, that follows from the fact that it has dimension $\dim(\Delta(P)) - 1.$

So suppose that $m(G) > 2$. Then there are at least two undirected edges. Let $e$ be an undirected edge. Let $U' = U \setminus \{e\}$. Then $\Gamma(G) = \Gamma(G - e) \cup \Gamma(G \setminus U')$. We see that $m(G-e) \geq m(G)$ and $m(G \setminus U') \geq m(G)$. By induction, $\Gamma(G-e)$ and $\Gamma(G \setminus U')$ both satisfy $(S_{m(G)-1})$.

We claim that $\Gamma(G - e) \cap \Gamma(G \setminus U')$ is isomorphic to $\Gamma(G/e)$. 
Let $I \in V(\Gamma(G-e) \cap \Gamma(G \setminus U')$. Then there exists $f \in U'$ such that $I$ is an ideal of $P$ and $e \cup f \subseteq I$. Then $f \subseteq I/e$, so $I/e \in V(\Gamma(G/e))$. If we define $F: V(\Gamma(G-e) \cap \Gamma(G \setminus U')) \rightarrow V(\Gamma(G/e))$ by $F(I) = I/e$, then $F$ is an isomorphism of simplicial complexes. 

We see that $m(G)-1 \leq m(G/e) \leq m(G)$. Moreover, if $G/e$ has a near cycle, then $m(G) = 2.$ Since we assume $m(G) > 2$, it follows that $G/e$ has no near cycle, and hence $\dim(\Gamma(G/e)) = \dim(\Gamma(G)) - 1$. By induction, $\Gamma(G/e)$ satisfies $(S_{m(G)-2})$. By Proposition \ref{prop:constructible}, it follows that $\Gamma(G)$ satisfies $(S_{m(G)-1})$. Thus By Proposition \ref{prop:relative}, $\Phi(G)$ satisfies $(S_{m(G)}).$

Now let $\mathfrak{G} \subseteq \Aut(G)$. By Proposition \ref{prop:graphtocomplex}, we see that $\chi(G, \mathfrak{G}, \mathbf{x}) = \Hilb(\Phi(G), \mathfrak{G}, \mathbf{x})$. Hence $\chi(G, \mathfrak{G}, x) = \ps \Hilb(\Phi(G), \mathfrak{G}, \mathbf{x}).$ We write \[\chi(G, \mathfrak{G}, x) = \sum_{i=0}^n f_{i-2} \binom{x}{i} = \sum_{i=0}^n h_{i-1} \binom{x+n-i}{n}. \]
From principal specialization, it follows that 
\begin{align*} f_{i-1} &= \sum_{S \subseteq [n-1]: |S| = i} [M_{S,n}] \chi(G, \mathfrak{G}, \mathbf{x}) \\
&= \sum_{S \subseteq [n-1]: |S| = i} [M_{S,n}] \Hilb(\Phi(G), \mathfrak{G}, \mathbf{x}) \\ 
&= f_{i-1}(\Phi(G)). \end{align*}
Similarly, $h_i = h_i(\Phi(G)).$
The other results follow from Theorem \ref{thm:intro3} and from Theorem \ref{thm:interpretation}.\end{proof}

\section{Double Posets}
\label{sec:dbl}

Now we will discuss double posets, and marked posets. The Hopf algebra of double posets was introduced by Malvenuto and Reutenauer \cite{malvenuto-reutenauer}. Grinberg associated a quasisymmetric function to any double poset, which is a generalization of Gessel's $P$-partition enumerator. This quasisymmetric function is studied extensively by Grinberg \cite{grinberg}, who proved a combinatorial reciprocity theorem. We studied a quasisymmetric class function associated to a double poset \cite{white-2}.

Given a finite set $N$, a \emph{double poset} on $N$ is a triple $(N, \leq_1, \leq_2)$ where $\leq_1$ and $\leq_2$ are both partial orders on $N$. Often for standard poset terminology, we will use $\leq_i$ as a prefix to specify which of the two partial orders is being referred to. For instance, a $\leq_1$-order ideal is a subset that is an order ideal with respect to the first partial order, and a $\leq_1$-covering relation refers to a pair $(x,y)$ such that $x \prec_1 y$.

\begin{figure}
\begin{center}
\begin{tabular}{cc}
\begin{tikzpicture}
  \node[circle, draw=black, fill=white] (b) at (0,2) {$b$};
  \node[circle, draw=black, fill=white] (a) at (0,0) {$a$};
  \node[circle, draw=black, fill=white] (c) at (2,0) {$c$};
  \node[circle, draw=black, fill=white] (d) at (2,2) {$d$};
 \draw[-Latex] (b) -- (a);
  \draw[-Latex] (d) edge (c);
  \draw[-Latex] (b) -- (c);
    \draw[-Latex] (d) -- (a);

\end{tikzpicture}

& 

\begin{tikzpicture}
  \node[circle, draw=black, fill=white] (b) at (0,2) {$c$};
  \node[circle, draw=black, fill=white] (a) at (0,0) {$b$};
  \node[circle, draw=black, fill=white] (c) at (2,0) {$d$};
  \node[circle, draw=black, fill=white] (d) at (2,2) {$a$};
 \draw[-Latex] (b) -- (a);
  \draw[-Latex] (d) edge (c);

\end{tikzpicture}
\end{tabular}
\end{center}
\caption{A double poset, with $\leq_1$ on the left, and $\leq_2$ on the right.}
\label{fig:doubleposet}
\end{figure}
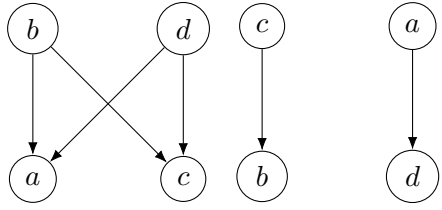

Given a double poset $D$, a pair $(m,m') \in M$ is an \emph{inversion} if $m <_1 m'$ and $m' <_2 m$. 
We refer to an inversion $(m, m')$ as a \emph{descent} if $m \prec_1 m'$. We say that $D$ is \emph{inversion-reducible} if for every inversion $(m, m')$, either $(m,m')$ is a descent, or there exists $m''$ with $m <_1 m'' <_1 m'$ such that $(m, m'')$ or $(m'', m')$ is an inversion. Finally, a double poset is \emph{tertispecial} if, whenever $m \prec_1 m'$, then $m$ and $m'$ are $\leq_2$-comparable.

 Let $D$ be a double poset on a finite set $N$, and let $f: N \rightarrow \mathbb{N}$. Then $f$ is a $D$-partition if and only if it satisfies the following two properties:
\begin{enumerate}
    \item For $i \leq_1 j$ in $D$, we have $f(i) \leq f(j)$.
    \item For $i \leq_1 j$ and $j \leq_2 i$ in $D$, we have $f(i) < f(j)$.
\end{enumerate}
We let $P_D$ be the set of $D$-partitions.

Given a double poset $D$, and a permutation $\mathfrak{g} \in \mathfrak{S}_N$, we say $\mathfrak{g}$ is an \emph{automorphism} of $D$ if for all $i, j \in N$, and $k \in \{1,2 \}$, if $i \leq_k j$, then $\mathfrak{g}(i) \leq_k \mathfrak{g}(j)$. We let $\Aut(D)$ be the automorphism group of $D$. For instance, for the double poset in Figure \ref{fig:doubleposet}, the permutation $(ac)(bd)$ is the only nontrivial automorphism. Similarly the only nontrivial automorphism of the double poset in Figure \ref{fig:specialposet} is the permutation $(a)(bd)(c)$.

Let $\mathfrak{G} \subseteq \Aut(D)$. For $\mathfrak{g} \in \mathfrak{G}$ and $f: N \rightarrow \mathbb{N}$, let $\mathfrak{g} \cdot f$ be defined by $(\mathfrak{g} \cdot f)(v) = f(\mathfrak{g}^{-1} \cdot v)$ for all $v \in N$. This defines an action of $\mathfrak{G}$ on $P_D$. Moreover, we see that $\mathbf{x}^f = \mathbf{x}^{\mathfrak{g} \cdot f}$.

For a double poset $D$ on $N$, $\mathfrak{G} \subseteq \Aut(D)$, and $\mathfrak{g} \in \mathfrak{G}$, let \[\Omega(D, \mathfrak{G}, \mathbf{x}; \mathfrak{g}) = \sum_{f \in P_D: \mathfrak{g}f = f} \mathbf{x}^f.\]
We call $\Omega(D, \mathfrak{G}, \mathbf{x})$ the \emph{$D$-partition quasisymmetric class function}. We introduced this invariant in \cite{white-2}. 

As an example, consider the double poset $D$ in Figure \ref{fig:doubleposet}, and let $\mathfrak{G} = \Aut(D)$. Let $\rho$ denote the regular representation. Then \begin{align*} \Omega(D, \mathfrak{G}, \mathbf{x}) & = M_{\{2\},4}+\rho(M_{\{2,3\},4}+M_{\{1,3\},4}+M_{\{1,2\},4}+2M_{\{1,2,3\},4}) \\ 
& = F_{\{2\},4}+\sgn (F_{\{2,3\},4}+F_{\{1,2\},4} - F_{\{1,2,3\},4}) + \rho F_{\{1,3\},4}. 
\end{align*}

As another example, consider the double poset $D$ in Figure \ref{fig:specialposet}, and let $\mathfrak{G} = \Aut(D)$. Let $\rho$ denote the regular representation. Then \begin{align*} \Omega(D, \mathfrak{G}, \mathbf{x}) & = M_{\{1\},4}+M_{\{1,3\},4}+\rho(M_{\{1,2\},4}+M_{\{1,2,3\},4}) \\ 
& = F_{\{1\},4}+\sgn F_{\{1,2\},4}.
\end{align*}

\begin{figure}
\begin{center}
\begin{tabular}{cc}
\begin{tikzpicture}
\node[circle,draw=black] (A) at (0,1) {a};
\node[circle,draw=black ] (B) at (1,0) {b};
\node[circle,draw=black] (C) at (0,-1) {c};
\node[circle,draw=black] (D) at (-1,0) {d};

\draw[-Latex] (A) -- (B);
\draw[-Latex] (B) -- (C);
\draw[-Latex] (D) -- (C);
\draw[-Latex] (A) -- (D);
\end{tikzpicture}
&
\begin{tikzpicture}
  \node[circle, draw=black, fill=white] (b) at (0,2) {$a$};
  \node[circle, draw=black, fill=white] (a) at (0,0) {$b$};
  \node[circle, draw=black, fill=white] (c) at (2,0) {$d$};
  \node[circle, draw=black, fill=white] (d) at (2,2) {$c$};
 \draw[-Latex] (b) -- (a);
  \draw[-Latex] (d) edge (c);
  \draw[-Latex] (b) -- (c);
    \draw[-Latex] (d) -- (a);

\end{tikzpicture}
\end{tabular}
\end{center}
\caption{A double poset, with $\leq_1$ on the left, and $\leq_2$ on the right.}
\label{fig:specialposet}
\end{figure}
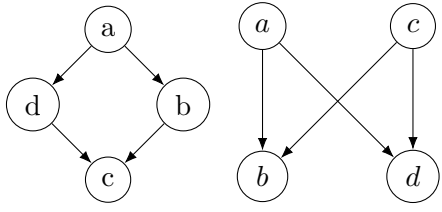

\subsection{From Double Posets to Mixed graphs}
Let $D$ be a double poset on a finite set $N$.
We can associate a mixed graph $G(D)$ to an inversion-reducible double poset $D$. We let $D_1 = \{(u,v): v \mbox{ $\leq_1$-covers } u \}.$ 
We let $U = \{uv:  v \mbox{ $\leq_1$-covers } u, v \leq_2 v \}.$ Then $G(D) = (V(P), U, D_1).$ We see that, if $\mathfrak{g}$ is an automorphism of $D$, then it is also an automorphism of $G(D).$ Thus, $\Aut(D) \subseteq \Aut(G(D)).$

Moreover, we see that a function $f:N \to \mathbb{N}$ is a $D$-partition if and only if $f$ is a weak coloring of $G(D).$ 
Thus $\Omega(D, \mathfrak{G}, \mathbf{x}) = \chi(G(D), \mathfrak{G}, \mathbf{x}).$

Thus, we can transfer the results from the previous section to derive results about double posets. 
First, $G(D)$ has no near cycles. If $G(D)$ had a near cycle $C$ with undirected edge $uv$, and there is a directed path from $u$ to $v$. By definition of $G(D)$, since $uv \in E(G(P))$, we know $v$ $\leq_1$-covers $u$ and $v \leq_2 u$. However, there is also a directed path of length at least two from $v$ to $u$, which implies that $u \leq_1 v$ but that $v$ does not cover $u$. Hence there is no near cycle. From Theorem \ref{thm:mixedgraphs} we conclude the following:
\begin{theorem}
Let $D$ be a double poset on $n$ elements. Suppose that $D$ is inversion-reducible, and let $m(D) = m(G(D)).$ Let $\mathfrak{G}$ act on $D$. Then $\mathfrak{G}$ acts on $G(D)$, and $\Omega(D, \mathfrak{G}, \mathbf{x}) = \chi(G(D), \mathfrak{G}, \mathbf{x}).$

For all $S \subseteq [n-1]$, if $|S| \leq m(D)$, we have $[F_{S,n}]\Omega(D,\mathfrak{G}, \mathbf{x}) \geq 0.$

\end{theorem}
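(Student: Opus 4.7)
The plan is to assemble the pieces already laid out in the discussion preceding the theorem and then invoke Theorem \ref{thm:mixedgraphs}. First, I would verify that $\mathfrak{G}$ acts on $G(D)$: an automorphism of $D$ preserves both $\leq_1$ and $\leq_2$, hence preserves the $\leq_1$-cover relations (the directed edges of $G(D)$) as well as the subset of those covers which are $\leq_2$-comparable in the reverse direction (the undirected edges of $G(D)$). Thus the natural inclusion $\Aut(D) \hookrightarrow \Aut(G(D))$ places $\mathfrak{G}$ inside $\Aut(G(D))$.

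Next, I would establish the identity $\Omega(D, \mathfrak{G}, \mathbf{x}) = \chi(G(D), \mathfrak{G}, \mathbf{x})$ by showing that, under the hypothesis of inversion-reducibility, a function $f \colon N \to \mathbb{N}$ is a $D$-partition if and only if $f$ is a weak coloring of $G(D)$. The forward direction is immediate from the definition of $G(D)$: directed cover edges enforce $f(u) \leq f(v)$, and for any descent $u \lessdot_1 v$ with $v \leq_2 u$, the corresponding undirected edge enforces $f(u) \neq f(v)$, hence $f(u) < f(v)$. For the reverse direction, given a weak coloring $f$, transitivity of $\leq$ propagates the cover inequalities to $f(u) \leq f(v)$ for every $u \leq_1 v$. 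To deduce the strict inequality for an arbitrary inversion $(u,v)$, I would induct on the length of a $\leq_1$-chain from $u$ to $v$: inversion-reducibility supplies either a descent (handled by an undirected edge) or an intermediate $w$ with $u \leq_1 w \leq_1 v$ such that one of $(u,w)$ or $(w,v)$ is an inversion with a strictly shorter chain, to which the induction hypothesis applies. Because both sides of the claimed identity are computed orbit-by-orbit under $\mathfrak{G}$, matching the $\mathfrak{g}$-fixed $D$-partitions with the $\mathfrak{g}$-fixed weak colorings gives the equality of quasisymmetric class functions.

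Third, I would check that $G(D)$ satisfies the hypotheses of Theorem \ref{thm:mixedgraphs}: it is acyclic, since its directed edges are cover relations of the finite poset $(N, \leq_1)$, so a directed cycle would violate antisymmetry; and it has no near cycles, by the argument given just before the theorem statement, namely that an undirected edge $uv$ of $G(D)$ is by construction a $\leq_1$-cover, which precludes a length-two directed path joining its endpoints.

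Finally, Theorem \ref{thm:mixedgraphs} applied to $G(D)$ with the group $\mathfrak{G}$ yields that $[F_{S,n}]\chi(G(D), \mathfrak{G}, \mathbf{x})$ is an effective character for every $S \subseteq [n-1]$ with $|S| \leq m(G(D)) = m(D)$, and transferring this along the identity $\Omega(D, \mathfrak{G}, \mathbf{x}) = \chi(G(D), \mathfrak{G}, \mathbf{x})$ gives the theorem. The main obstacle is the reverse direction of the characterization of $D$-partitions as weak colorings of $G(D)$; inversion-reducibility is exactly the hypothesis that makes this work, and the inductive reduction of an arbitrary inversion to a chain of descent inversions must be set up carefully so that every strict inequality of a $D$-partition is certified by an undirected edge of $G(D)$.
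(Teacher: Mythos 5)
Your proposal is correct and follows essentially the same route as the paper, which proves this theorem by the discussion immediately preceding it: construct $G(D)$, note $\Aut(D)\subseteq\Aut(G(D))$, identify $D$-partitions with weak colorings, verify acyclicity and the absence of near cycles, and invoke Theorem \ref{thm:mixedgraphs}. The only difference is that you spell out the inductive use of inversion-reducibility to show every inversion forces a strict inequality for a weak coloring, a step the paper merely asserts with ``we see that''; your induction on the size of the $\leq_1$-interval is the right way to make it precise.
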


Now we apply our results to tertispecial posets.
\begin{proposition}
Suppose that $D$ is a tertispecial poset on $n$ elements. Then $D$ is inversion reducible. Moreover $G(D)$ has no mixed cycles. Thus, for every $\mathfrak{G} \subseteq \Aut(D)$ and every $S \subseteq [n-1]$, we have $[F_{S, n}]\Omega(D, \mathfrak{G}, \mathbf{x}) \geq 0.$
\end{proposition}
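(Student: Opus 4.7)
The plan is to prove the two substantive claims in the proposition separately and then invoke Theorem \ref{thm:mixedgraphs} routinely.

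For the inversion-reducibility claim, let $(m, m')$ be an inversion, so $m <_1 m'$ and $m' <_2 m$. If $m \prec_1 m'$ is a cover, then $(m, m')$ is itself a descent and there is nothing to show. Otherwise choose a saturated $\leq_1$-chain $m = m_0 \prec_1 m_1 \prec_1 \cdots \prec_1 m_k = m'$ of length $k \geq 2$, and apply the tertispecial hypothesis to the cover $m_0 \prec_1 m_1$ to obtain that $m$ and $m_1$ are $\leq_2$-comparable. If $m_1 <_2 m$, then $(m, m_1)$ is an inversion with $m_1$ strictly $\leq_1$-between $m$ and $m'$. If instead $m <_2 m_1$, combining with $m' <_2 m$ gives $m' <_2 m_1$ by transitivity, so $(m_1, m')$ is an inversion with $m_1$ strictly $\leq_1$-between $m$ and $m'$. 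Either way we obtain the required reduction.

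For the no-mixed-cycles claim I would proceed by contradiction. Assume $C$ is a mixed cycle in $G(D)$ with vertex sequence $v_0, v_1, \ldots, v_{\ell-1}, v_0$. Each consecutive pair $v_i, v_{i+1}$ corresponds to a $\leq_1$-cover, and by tertispeciality each such pair is $\leq_2$-comparable; reading off the induced $\leq_2$-orientations around $C$ yields a cyclic sequence of strict $\leq_2$-inequalities. Since $\leq_2$ is a strict partial order, these cannot all point the same way around $C$, so there must be a reversal. The plan is to analyze a reversal in conjunction with a chosen directed edge of $C$: using tertispeciality on the covers flanking the reversal together with transitivity of $\leq_2$, I expect to produce either a directed $\leq_1$-path of length at least two between two elements that are already $\leq_1$-covering (contradicting the cover relation) or a direct contradiction to antisymmetry of $\leq_2$. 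This essentially generalizes the near-cycle argument used just above in the proof of Theorem \ref{thm:mixedgraphs}'s application to inversion-reducible double posets.

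With both intermediate facts in hand the conclusion is immediate: inversion-reducibility gives $\Omega(D, \mathfrak{G}, \mathbf{x}) = \chi(G(D), \mathfrak{G}, \mathbf{x})$ via the preceding result, while the absence of mixed cycles forces $m(G(D)) = n$, so $|S| \leq n - 1 \leq m(G(D))$ for every $S \subseteq [n-1]$; Theorem \ref{thm:mixedgraphs} then yields that $[F_{S, n}]\Omega(D, \mathfrak{G}, \mathbf{x})$ is effective. The main obstacle is the second step: turning the cyclic $\leq_2$-comparabilities supplied by tertispeciality into a contradiction that genuinely uses the mixed-cycle hypothesis (and not merely the existence of a cycle in the underlying graph) is the delicate part, and is where the interplay between directed and undirected edges of $G(D)$ must be leveraged carefully.
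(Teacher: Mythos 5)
Your treatment of inversion-reducibility is correct: applying tertispeciality to the first cover $m_0\prec_1 m_1$ of a saturated chain and splitting on the two possible $\leq_2$-orientations verifies the definition directly. This is essentially the core of the paper's argument (the paper wraps the same case analysis in an induction on $n$ in order to produce an actual descent in the interval, which the definition does not require), so there is nothing to object to here. The final paragraph, deducing $m(G(D))=n$ from the absence of mixed cycles and citing the theorem for inversion-reducible double posets, is also the paper's route and is fine.

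The genuine gap is the no-mixed-cycles step, which you acknowledge you have not completed: no contradiction is ever derived. Worse, the plan points in the wrong direction. You argue that the cyclic sequence of $\leq_2$-comparabilities ``cannot all point the same way'' and propose to extract a contradiction from a \emph{reversal}; but reversals in an undirected cycle of a Hasse diagram are completely harmless (the Hasse diagram of the Boolean lattice on two atoms is a $4$-cycle with two reversals and no pathology whatsoever), so no local analysis of the covers flanking a reversal can produce a contradiction. The correct argument, and the one the paper gives, is the mirror image of your first step: tertispeciality forces a \emph{uniform} $\leq_2$-orientation on every edge as traversed by a mixed cycle. A directed edge $(u,v)$ that is not also undirected is a non-descent cover, so $v\leq_2 u$ fails, and tertispeciality then forces $u<_2 v$; an undirected edge is a descent cover $u\prec_1 v$ with $v<_2 u$, and it is only along these edges that a mixed cycle may step downward in $\leq_1$, i.e., from $v$ to $u$, which is again a strict increase in $\leq_2$. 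Thus every permitted step of a mixed cycle is strictly $\leq_2$-increasing, and closing the cycle contradicts antisymmetry of $\leq_2$. In other words, the uniformity of the $\leq_2$-direction around the cycle \emph{is} the contradiction; it is not something to be ruled out in favor of a reversal. Until this step is supplied, the conclusion $m(G(D))=n$, and hence the $F$-positivity for all $S\subseteq[n-1]$, is not established.
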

\begin{proof}
We prove the first condition by induction on $n$. Suppose that $(x,y)$ is an inversion pair. if $n = 2$, then the result is immediate. So suppose $n > 2.$

Choose $t$ such that
 $x \prec_1 t \leq_1 y$. Since $D$ is tertispecial, we have $x \leq_2 t$ or $t \leq_2 x$. In the latter case, we have found a descent pair $(x,t)$. In the former case the pair $(t,y)$ forms an inversion pair. We observe that the interval $[t,y]$ is equal to $(y) \setminus (t)$, where $(a)$ is the principal order ideal generated by $a$. Since $|[t,y]| < |N|$, by induction there is a descent pair $(w,z)$ in $[t,y]$, and hence $[x,y]$ also has a descent.
 
 To see that $G(D)$ has no mixed cycles, let $(u,v)$ be a directed edge with $v \prec_1 u$. Then we have $u \not\leq_2 v$. However, by the definition of tertispecial, $u$ and $v$ are $\leq_2$-comparable, so we have $v \leq_2 u$. We see then that for every directed edge $(u,v)$, we have $v \leq_2 u$. Hence $D(P)$ must be acyclic. The rest follows from the previous Theorem.
\end{proof}
Thus we have shown Theorem \ref{thm:introposet2}.

\newcommand{\etalchar}[1]{$^{#1}$}
\providecommand{\bysame}{\leavevmode\hbox to3em{\hrulefill}\thinspace}
\providecommand{\MR}{\relax\ifhmode\unskip\space\fi MR }
\providecommand{\MRhref}[2]{%
  \href{http://www.ams.org/mathscinet-getitem?mr=#1}{#2}
}
\providecommand{\href}[2]{#2}

\end{document}